\newtheorem{lemma}{Lemma}[section]
\newtheorem{prop}[lemma]{Proposition}
\newtheorem{thm}[lemma]{Theorem}
\newtheorem{cor}[lemma]{Corollary}
\newtheorem{defn}[lemma]{Definition}
\newtheorem{lem}[lemma]{Lemma}
\newtheorem{rem}[lemma]{Remark}
\newtheorem*{thm*}{Theorem}
\newtheorem*{special theorem}{My Specially-Named Theorem}
\newcommand{\Z} { {\mathbb Z} }
\newcommand{\Q} { {\mathbb Q} }
\newcommand{\Spec} { {\mathrm{Spec~}} }
\newcommand{\A} { {\mathbb A} }
\newcommand{\colim} { {\mathrm{colim}} }
\newcommand{\C} { {\mathbb C} }
\newcommand{\DB} { {\mathcal D\mathcal B} }
\newcommand{\comment}[1]{}
\newcommand{\skipline}{\vspace{12pt}}
\title{Deligne-Beilinson cycle maps for Lichtenbaum cohomology}
\date{}
\author{Tohru Kohrita}
\address{Graduate School of Mathematics, Nagoya University, Furocho, Chikusaku, Nagoya 464-8602, Japan}
\email{kohrita.tohru@j.mbox.nagoya-u.ac.jp}
\begin{document}

\maketitle

\begin{abstract}
We define Deligne-Beilinson cycle maps for Lichtenbaum cohomology $H_L^m(X,\Z(n))$ and that with compact supports $H_{c,L}^m(X,\Z(n))$ of an arbitrary complex algebraic variety $X.$ When $(m,n)=(2,1),$ the homological part of our cycle map with compact supports gives a generalization of the Abel-Jacobi theorem and its projection to the Betti cohomology yields that of the Lefschetz theorem on $(1,1)$-cycles for arbitrary complex algebraic varieties. In general degrees $(m,n),$ we show that the Deligne-Beilinson cycle maps are always surjective on torsion and have torsion-free cokernels. If $m\leq 2n,$ the version with compact supports induces an isomorphism on torsion, and so does the one without compact supports if $min\{2m-1, 2\dim X+1\}\leq 2n.$ We also characterize the algebraic part of Griffiths's intermediate Jacobians with a universal property.
\end{abstract}

\section{Introduction}

Suppose $X$ is a smooth projective complex algebraic variety. The Chow group $CH^r(X)$ of cycles of codimension $r$ admits a cycle map $cl^\DB$ to the Deligne-Beilinson cohomology $H_\DB^{2r}(X,\Z(r))$ that is compatible with the Abel-Jacobi map $AJ$ and the Betti cycle map $cl^B$ (\cite[Section 7]{Esnault-Viehweg}):
\begin{equation}\label{picture to extend}
\xymatrix{0\ar[r] & CH_{hom}^r(X) \ar[r] \ar[d]_-{AJ} & CH^r(X) \ar[dr]^-{cl^B} \ar[d]_-{cl^\DB} \\
0\ar[r] & J_G^r(X) \ar[r]_-{\text{canonical}} & H_\DB^{2r}(X,\Z(r)) \ar[r] & Hdg^r(X)\ar[r] & 0} 
\end{equation}
Here, the homological part $CH_{hom}^r(X)$ is by definition the kernel of $cl^B,$ $J_G^r(X)$ is the $r$-th Griffiths intermediate Jacobian of $X$ and $Hdg^r(X)$ is the group of Hodge $r$-cycles.

With Bloch's construction of cycle maps for his higher Chow groups~\cite{Bloch cycle map}, this picture naturally extends to higher Chow groups of smooth projective complex varieties. With Rosenschon and Srinivas's cycle maps for Lichtenbaum cohomology defined in \cite{Rosenschon-Srinivas}, there is also an analogue for Lichtenbaum cohomology of smooth projective complex varieties. Moreover, as shown in there, Bloch's cycle maps for higher Chow groups factor through Rosenschon-Srinivas's cycle maps for Lichtenbaum cohomology.

The purpose of this article is to extend the picture~(\ref{picture to extend}) to Lichtenbaum cohomology of arbitrary complex varieties $X,$ and study, in this \'etale context, Abel-Jacobi maps and Betti cycle maps through Deligne-Beilinson cycle maps. 

Our formulation uses eh hypercohomology of \cite{Geisser eh}. By Lichtenbaum cohomology $H_{(c),L}^m(X,\Z(n))$ (resp., with compact supports), we mean the eh hypercohomology (resp., with compact supports) of the eh sheafification of the Suslin-Friedlander motivic complex $\Z(n)^{SF}$ (see \cite[Lecture 16]{MVW}). The Deligne-Beilinson cohomology $H_{(c),\DB}^m(X,\Z(n))$ (resp., with compact supports) means eh hypercohomology (resp., with compact supports) of the eh sheafification of a certain complex of Zariski sheaves defined in \cite[(1.6.5)]{Beilinson} (see Definition~\ref{defn: cdh definition of DB}).

We construct the following analogue of the diagram~(\ref{picture to extend}) for an arbitrary complex algebraic variety $X$ (the diagram~(\ref{for corollary})):
\begin{displaymath}
\xymatrix{ 0\ar[r] & H_{(c),L,hom}^m(X,\Z(n)) \ar[r] \ar[d]_-{AJ_{(c),L}^{m,n}} & H_{(c),L}^m(X,\Z(n)) \ar[d]_-{cl_{(c),L}^\DB} \ar[r]^-{cl_{(c),L}^B} & im(cl_{(c),L}^B) \ar[r] \ar@{_{(}->}[d]_-{\text{inc.}} & 0 \\
0\ar[r] & J_{(c)}^{m,n}(X) \ar[r] &  H_{(c),\DB}^m(X,\Z(n)) \ar[r] & H_{(c),B}^m(X,\Z(n))\cap F^nH_{(c),B}^m(X,\C) \ar[r] &0.}
\end{displaymath}
Here, $F^nH_{(c),B}^m(X,\C)$ signifies the $n$-th Hodge filtration. $cl_{(c),L}^\DB: H_{(c),L}^m(X,\Z(n)) \longrightarrow H_{(c),\DB}^m(X,\Z(n))$ is the Deligne-Beilinson cycle map (with compact support) in Definition~\ref{defn: DB cycle map with compact supports}. $H_{(c),L,hom}^m(X,\Z(n))$ is the kernel of the Betti cycle map $cl_{(c),L}^B: H_{(c),L}^m(X,\Z(n)) \longrightarrow H_{(c),B}^m(X,\Z(n))$ whose definition is analogous to that of $cl_{(c),L}^\DB.$ $AJ_{(c),L}^{m,n}$ is defined as the restriction of $cl_{(c),L}^\DB,$ and its target $J_{(c)}^{m,n}(X)$ is the Carlson $n$-th intermediate Jacobian associated with the mixed Hodge structure of $H_{c,B}^{m-1}(X,\Z(n))$ (\cite{Carlson}). The homological part $H_{(c),L,hom}^m(X,\Z(n))$ agrees with the subgroup of all divisible elements of $H_{(c),L}^m(X,\Z(n))$ (Remark~\ref{rem: max div}). 

The cycle maps $cl_{(c),L}^\DB$ and $cl_{(c),L}^B$ are constructed in the derived category of sheaves on the eh site $Sch/\C_{eh}.$ If $X$ is smooth and proper, our cycle maps agree with the usual cycle maps for Chow groups after the composition with the canonical map $CH^r(X)\longrightarrow H_{L}^{2r}(X,\Z(r)).$

With the above diagram for $(m,n)=(2,1),$ we can generalize the Abel-Jacobi theorem and the Lefschetz theorem on $(1,1)$-cycles to singular varieties.

\begin{thm*}[{Theorem~\ref{thm: AJ and Lefschetz} and Remark~\ref{rem: max div}; cf. \cite[Theorem 7.8]{Arapura}}]
Suppose $X$ is an arbitrary separated scheme $X$ of finite type over $\C.$ Then, there are isomorphisms
\begin{displaymath}
\xymatrixcolsep{3.5pc}\xymatrix{H_{c,cdh}^2(X,\Z(1)_{cdh}^{SF})_{div} \ar[r]^-\cong_-{\text{canonical}} & H_{c,L,hom}^2(X,\Z(1)) \ar[r]^-\cong_-{AJ_{c,L}^{2,1}} & J_c^{2,1}(X)}
\end{displaymath}
and a surjection
\begin{displaymath}
\xymatrixcolsep{3.5pc}\xymatrix{H_{c,cdh}^2(X,\Z(1)_{cdh}^{SF}) \ar[r]^-\cong_-{\text{canonical}} & H_{c,L}^2(X,\Z(1)) \ar@{->>}[r]^-{cl_{c,L}^{B}} & H_{c,B}^2(X,\Z(1))\cap F^1H_{c,B}^2(X,\C),}
\end{displaymath}
where $H_{c,cdh}^2(X,\Z(1)_{cdh}^{SF})_{div}$ is the maximal divisible subgroup of $H_{c,cdh}^2(X,\Z(1)_{cdh}^{SF}).$
\end{thm*}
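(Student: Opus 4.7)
The plan is to derive both claims from the main commutative diagram displayed in the introduction, specialised to $(m,n)=(2,1).$ Since $m=2=2n,$ the hypothesis $m\leq 2n$ is satisfied, so by the general result stated there, $cl_{c,L}^{\DB}$ is an isomorphism on torsion subgroups and has torsion-free cokernel.

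By Remark~\ref{rem: max div}, the source $H_{c,L,hom}^2(X,\Z(1))$ of $AJ_{c,L}^{2,1}$ is the maximal divisible subgroup of $H_{c,L}^2(X,\Z(1)),$ and the target $J_c^{2,1}(X)$ is itself divisible since it is Carlson's quotient of a $\C$-vector space by a $\Z$-lattice plus a complex subspace. A homomorphism between divisible abelian groups is an isomorphism if and only if it is an isomorphism both on torsion subgroups and after tensoring with $\Q.$ The torsion part is the general theorem. For the rational part, I would reduce by a cdh hypercover by smooth projective varieties to the classical setting, where $H_L^2(X,\Q(1))\cong CH^1(X)\otimes\Q$ and the statement becomes the rational Abel-Jacobi isomorphism $\mathrm{Pic}^0(X)\otimes\Q\cong J_G^1(X)\otimes\Q.$

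For the second assertion I apply the snake lemma to the main commutative diagram. Once $AJ_{c,L}^{2,1}$ is known to be an isomorphism, snake gives an isomorphism between $\mathrm{coker}(cl_{c,L}^{\DB})$ and the cokernel of the inclusion $\mathrm{image}(cl_{c,L}^B)\hookrightarrow H_{c,B}^2(X,\Z(1))\cap F^1H_{c,B}^2(X,\C).$ The former is torsion-free by the general theorem. To conclude the latter vanishes, I would observe that it is a subquotient of a finitely generated abelian group and is zero after tensoring with $\Q$ by a singular version of the Lefschetz $(1,1)$ theorem for compactly supported cohomology, which reduces via cdh descent to the classical smooth projective case and is also furnished by Arapura \cite[Theorem 7.8]{Arapura}. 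Being simultaneously finite and torsion-free, it vanishes, yielding the stated surjection.

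The identifications with $H_{c,cdh}^2(X,\Z(1)_{cdh}^{SF})$ then follow from the comparison of cdh and eh cohomology of $\Z(1)^{SF}:$ the canonical change-of-topology map has kernel and cokernel that are torsion of bounded exponent, so on the one hand maximal divisible subgroups match and on the other the image of the Betti cycle map is unchanged. The main obstacle I expect is the rational-rank match for $AJ_{c,L}^{2,1}$ in the singular setting, together with the rational Lefschetz $(1,1)$ descent for compactly supported cohomology; both rest on carefully tracking weight filtrations through the cdh descent spectral sequence on a smooth projective hypercover of $X.$
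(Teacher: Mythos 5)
Your group-theoretic scaffolding (divisible source and target, ``iso on torsion plus iso after $\otimes\Q$ implies iso,'' the snake lemma, ``finitely generated, torsion-free and rationally trivial implies zero'') is correct, but the proposal defers or mishandles exactly the steps that carry the content of the theorem. First, the identification $H_{c,cdh}^2(X,\Z(1)_{cdh}^{SF})\cong H_{c,L}^2(X,\Z(1))$ is asserted in the statement as an isomorphism, and your justification --- that the change-of-topology map has kernel and cokernel of bounded exponent --- is both unsupported (no source for boundedness is offered, and in general the gap between Nisnevich and \'etale motivic cohomology is not of bounded exponent) and insufficient: a bounded-exponent kernel does not even force maximal divisible subgroups to match (a Pr\"ufer group modulo its $p$-torsion gives a counterexample), let alone yield the asserted isomorphism. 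The actual argument is special to weight one: $\Z(1)^{SF}\simeq\Gm[-1]$, localization reduces to proper schemes in degrees $\leq 1$ for $\Gm$, and on a smooth proper cdh hypercover the $E_1$-terms of the two descent spectral sequences agree for $q\leq 1$ because $H^0$ and, by Hilbert's Theorem 90, $H^1$ of $\Gm$ coincide in the Zariski and \'etale topologies.

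Second, the two steps you yourself flag as the main obstacle --- the rational statement for $AJ_{c,L}^{2,1}$ and the rational compactly supported Lefschetz $(1,1)$ --- are where the theorem lives, and ``reduce by a cdh hypercover'' is not yet a proof: the homological part is a kernel and $im(cl_{c,L}^B)$ is an image, and neither passes cleanly through a descent spectral sequence. The paper's route sidesteps both by proving the single stronger statement that $cl_{c,L}^{\DB}\colon H_{c,L}^2(X,\Z(1))\to H_{c,\DB}^2(X,\Z(1))$ is an isomorphism integrally: localization reduces to $cl_L^{\DB}$ on proper $S$ in degrees $i=1,2$, and on a smooth proper hypercover the map of $E_1$-terms is $CH^1(S_p,2-q)\to H_{\DB}^q(S_p,\Z(1))$, an isomorphism for $q\leq 2$ (trivial for $q=0$, the exponential isomorphism $\C^*\cong\C/2\pi i\Z$ for $q=1$, and the classical Abel--Jacobi and Lefschetz $(1,1)$ theorems for $q=2$). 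Both displayed assertions then fall out of diagram~(\ref{for corollary}) by the snake lemma, with no need for the torsion theorem, the divisibility bookkeeping, or Arapura's result. If you keep your decomposition you will in any case have to establish these same $E_1$-level isomorphisms, so nothing is gained; as written, the proposal has a genuine gap at the comparison isomorphism and at both rational inputs.
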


For other degrees $(m,n),$ we show that the cycle maps $cl_L^\DB$ and $cl_{c,L}^\DB$ are always surjective on torsion and have torsion-free cokernels (Theorem~\ref{thm: torsion main}~(i)). We shall also prove the following.

\begin{thm*}[{Theorem~\ref{thm: torsion main} and Corollary~\ref{cor: on torsion}; cf. \cite[Proposition 5.1]{Rosenschon-Srinivas} for the smooth projective case}]
For an arbitrary separated scheme of finite type over $\C$ and non-negative integers $m$ and $n,$
\begin{enumerate}
\item If $m\leq 2n,$ then the maps $cl_{c,L}^\DB$ and $AJ_{c,L}^{m,n}$ are isomorphisms on torsion.
\item If $min\{2m-1,2\dim X+1\}\leq 2n,$ then the maps $cl_L^\DB$ and $AJ_L^{m,n}$ are isomorphisms on torsion.
\item If $m< 2n,$ then $im(cl_{c,L}^B)=H_{c,B}^m(X,\Z(n))\cap F^nH_{c,B}^m(X,\C)=H_{c,B}^m(X,\Z(n))_{tor}.$
\item If $min\{2m, 2\dim X\}<2n,$ then $im(cl_{L}^B)=H_{B}^m(X,\Z(n))\cap F^nH_{B}^m(X,\C)=H_{B}^m(X,\Z(n))_{tor}.$
\end{enumerate}
\end{thm*}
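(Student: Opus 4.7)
My plan is to prove (iii) and (iv) first by a weight-theoretic argument plus Suslin--Voevodsky rigidity, and then to bootstrap these into (i) and (ii) through Bockstein comparisons.

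For (iii), Deligne's theorem bounds the weights on $H_{c,B}^m(X,\Q)$ by $m$, so the twist $H_{c,B}^m(X,\Q(n))$ has all weights $\leq m-2n<0$. This forces $F^n H_{c,B}^m(X,\C)=0$, and consequently
\[
H_{c,B}^m(X,\Z(n))\cap F^n H_{c,B}^m(X,\C) = \ker(H_{c,B}^m(X,\Z(n))\to H_{c,B}^m(X,\C)) = H_{c,B}^m(X,\Z(n))_{tor}.
\]
The containment $im(cl_{c,L}^B)\subseteq H_{c,B}^m(X,\Z(n))\cap F^n$ is automatic from the defining diagram. For the reverse, given $x\in H_{c,B}^m(X,\Z(n))[\ell]$, I would lift $x$ via the Betti Bockstein to $\tilde x \in H_{c,B}^{m-1}(X,\Z/\ell(n))$, use the Suslin--Voevodsky rigidity isomorphism $H_{c,L}^{m-1}(X,\Z/\ell(n))\cong H_{c,B}^{m-1}(X,\Z/\ell(n))$ to find a Lichtenbaum lift $y$, and invoke naturality of Bockstein: $cl_{c,L}^B(\delta_L y) = \delta_B(cl_{c,L}^B y)=\delta_B\tilde x = x$. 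Part (iv) follows from the same template with the weight bound $\leq \min(2m,2\dim X)$ on $H^m(X,\Q)$ in place of the compactly-supported bound.

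For (i), the condition $m\leq 2n$ implies $m-1<2n$, so (iii) is applicable at degree $m-1$, giving $im(cl_{c,L}^B)_{m-1}=H_{c,B}^{m-1}(X,\Z(n))\cap F^n$. I compare the Bockstein sequences for $\Z\xrightarrow{\ell}\Z\to\Z/\ell$ on the Lichtenbaum and Deligne--Beilinson sides at degree $m-1$: the middle vertical map between the $\Z/\ell$-coefficient groups is an isomorphism because both $H_{c,L}^{m-1}(X,\Z/\ell(n))$ and $H_{c,\DB}^{m-1}(X,\Z/\ell(n))$ reduce to $H_{c,B}^{m-1}(X,\Z/\ell(n))$ (Suslin--Voevodsky on one side, and $\Z(n)_\DB\otimes^L\Z/\ell\simeq\Z/\ell(n)$ on the other). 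A snake lemma then identifies $\ker(cl_{c,L}^\DB[\ell])$ with the cokernel of $H_{c,L}^{m-1}(X,\Z(n))/\ell \to H_{c,\DB}^{m-1}(X,\Z(n))/\ell$, which is $\mathrm{coker}(cl_{c,L}^\DB)_{m-1}/\ell$ (using that this cokernel is torsion-free by the result stated just above in the excerpt). From the main diagram at degree $m-1$ there is a short exact sequence
\[
0\to \mathrm{coker}(AJ_{c,L}^{m-1,n})\to\mathrm{coker}(cl_{c,L}^\DB)_{m-1}\to (H_{c,B}^{m-1}\cap F^n)/im(cl_{c,L}^B)_{m-1}\to 0;
\]
the right term vanishes by (iii) at $m-1$, and the left is a quotient of the divisible Carlson Jacobian $J_c^{m-1,n}$, hence divisible. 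So $\mathrm{coker}(cl_{c,L}^\DB)_{m-1}$ is divisible, killing $\ker(cl_{c,L}^\DB[\ell])$ and showing that $cl_{c,L}^\DB[\ell]$ is an isomorphism.

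Once $cl_{c,L}^\DB[\ell]$ is an isomorphism, I obtain $AJ_{c,L}^{m,n}[\ell]$ by the five-lemma applied to the $\ell$-torsion version of the main diagram, whose rows remain short exact on $[\ell]$ by divisibility of $H_{c,L,hom}^m$ (Remark~\ref{rem: max div}) and of $J_c^{m,n}$. The right vertical is always an isomorphism on torsion: torsion classes in $H_{c,B}^m(X,\Z(n))$ lie trivially in every Hodge filtration piece, and they lie in $im(cl_{c,L}^B)$ by the same Bockstein-plus-Suslin lifting used in (iii). Part (ii) is obtained by the parallel argument, invoking (iv) at degree $m-1$ in place of (iii); the hypothesis $\min\{2m-1,2\dim X+1\}\leq 2n$ is engineered precisely to make (iv) applicable one degree below. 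The main substantive obstacle I anticipate is verifying the Hodge-theoretic weight bounds in the non-compact setting (especially $\leq 2\dim X$ on $H^m(X,\Q)$) and confirming that the cycle maps commute with Bocksteins at the derived-category level of eh-sheafified complexes; the rest is five-lemma and snake-lemma bookkeeping.
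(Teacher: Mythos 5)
Your overall strategy is sound and, once unwound, rests on the same two pillars as the paper's proof: (a) the rigidity identification of finite-coefficient Lichtenbaum cohomology with Betti cohomology together with compatibility of the cycle maps with the Bockstein triangles (this is Lemma~\ref{lem: mod a isomorphism} and the diagram~(\ref{multiplication diagram}), leading to Proposition~\ref{prop: comparison les}); and (b) Deligne's weight bounds. The "obstacle" you flag at the end is indeed a genuine lemma rather than bookkeeping, but it is true and is proved in the paper exactly as you sketch (via Geisser--Levine and Artin comparison). Your order of deduction is reversed relative to the paper: you prove (iii)/(iv) first by lifting torsion classes through the Bockstein and then deduce injectivity of $cl_{(c),L}^\DB$ on torsion in degree $m$ from divisibility of $\mathrm{coker}(cl_{(c),L}^\DB)$ in degree $m-1$; the paper first proves surjectivity on torsion and torsion-freeness of the cokernel in \emph{all} degrees (Theorem~\ref{thm: torsion main}(i)), gets injectivity on torsion from the vanishing $H_{(c),\DB}^{m-1}(X,\Z(n))\otimes\Q/\Z=0$ (Lemma~\ref{lem: DB and torsion}), and only then derives (iii)/(iv) from torsion-freeness of $i^{-1}(F^n)/im(cl^B)$. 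The two routes are logically equivalent up to reordering; yours is more direct for (iii)/(iv), the paper's yields the additional unconditional statement that the cokernel of $cl_{(c),L}^\DB$ is torsion-free in every degree.

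There is, however, one genuinely false intermediate claim: that $m<2n$ forces $F^nH_{c,B}^m(X,\C)=0$. Negative weights do not kill the Hodge filtration on the underlying complex vector space. For instance, if $X$ is a smooth affine curve of genus $\geq 1$ and $m=n=1$, then $F^1H^1_{c,B}(X,\C)$ surjects onto $F^1Gr^W_1H^1_{c,B}(X,\C)=H^{1,0}(\bar X)\neq 0$, even though the weight of the twist is negative. What is true, and is all you actually need, is the statement about \emph{integral} classes: if $x\in H_{c,B}^m(X,\Z(n))$ has $i(x)\in F^n$, then $i(x)\in F^n\cap\overline{F^n}$, whose image in each pure quotient $Gr^W_w$ with $w\leq m<2n$ vanishes, so $i(x)=0$ and $x$ is torsion. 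This is how the paper obtains $i^{-1}(F^nH_{c,B}^m(X,\C))=H_{c,B}^m(X,\Z(n))_{tor}$ in Proposition~\ref{prop: Jac DB Hodge sequence}. The same correction applies to your treatment of (iv). Since everything downstream only uses the equality $i^{-1}(F^n)=H_{tor}$ and never the vanishing of $F^n$ itself, the repair is local and the rest of your argument goes through.
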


With this theorem in hand, we give an algebraic construction of the ``algebraic part" of Griffiths's intermediate Jacobians by a universal property. This construction works over any algebraically closed field.

Here is an outline of the paper. In Section~\ref{section: Preliminaries: Deligne-Beilinson cohomology}, we give a detailed proof of the \'etale descent for Deligne-Beilinson cohomology and other cohomology theories of our interest. In Section~\ref{section: cdh method}, we prove the eh descent for these cohomologies. This enables us to define Deligne-Beilinson cohomology (with compact supports) for arbitrary complex algebraic varieties. After checking that our generalization behaves well with respect to mixed Hodge structures in a certain sense in Section~\ref{section: Intermediate Jacobians with compact supports}, we proceed to define Deligne-Beilinson cycle maps for Lichtenbaum cohomology (with compact supports) of arbitrary complex varieties in Section~\ref{section: Deligne-Beilinson cycle maps}. At this point, our versions of the Abel-Jacobi theorem and the Lefschetz theorem for singular varieties follow immediately. In Section~\ref{section: Study of torsion elements}, we study the torsion part of the Deligne-Beilinson cycle maps, and as a corollary, we characterize the ``algebraic part" of Griffiths intermediate Jacobians by a universal property in Section~\ref{section: Griffiths intermediate Jacobians revisited}.

\skipline
\emph{Convention.}
Schemes are assumed to be separated and of finite type over the field $\C$ of complex numbers. The category of all schemes (resp., all smooth schemes) is denoted by $Sch/\C$ (resp., $Sm/\C$). 

For an arbitrary scheme $X$ and integers $m$ and $n$ ($n\geq0$), Lichtenbaum cohomology and Lichtenbaum cohomology with compact supports mean:
\begin{itemize}
\item {\bf motivic cohomology}: 
$H_L^m(X,\Z(n)):=H_{eh}^m(X,\Z(n)_{eh}^{SF}),$
\item {\bf motivic cohomology with compact supports}: 
$H_{c,L}^m(X,\Z(n)):=H_{c,eh}^m(X,\Z(n)_{eh}^{SF}),$
\end{itemize}
where the right hand sides are eh hypercohomology (with compact supports) of the eh sheafification of the Suslin-Friedlander motivic complex $\Z(n)^{SF}.$ See \cite[Section 3]{Geisser eh} for eh hypercohomology with compact supports and \cite[Lecture 16]{MVW} for $\Z(n)^{SF}.$

\skipline
\emph{Acknowledgements.}
The author would like to thank Thomas Geisser and Shane Kelly for helpful conversations and suggestions.


\section{Preliminaries: Deligne-Beilinson cohomology}\label{section: Preliminaries: Deligne-Beilinson cohomology}

We give a detailed proof of the \'etale descent property of Deligne-Beilinson cohomology while organizing relevant topics (contained in \cite{Beilinson, Esnault-Viehweg}) for our purpose. We call a smooth compactification of a smooth scheme a \emph{good compactification} if the boundary divisor is strict normal crossing.

\begin{defn}[\cite{Beilinson, Esnault-Viehweg}.]\label{defn: DB cohomology for smooth schemes}
Let $X$ be a smooth scheme (over $\C$), and let $X\buildrel j\over\hookrightarrow \bar X$ be a good compactification with the boundary divisor $Z.$ The {\bf Deligne-Beilinson cohomology} of $X$ is defined as analytic hypercohomology
$$H_\DB^m(X,\Z(n)):=H_{an}^m(\bar X,\Z(n)_{\bar X,Z}^\DB),$$
where
$$\Z(n)_{\bar X,Z}^\DB:=cone(Rj_*\underline{\Z(n)}\oplus\Omega_{\bar X}^{\bullet\geq n}(\log Z)\buildrel\epsilon-\iota\over\longrightarrow Rj_*\Omega_X^\bullet)[-1]$$
is a complex of analytic sheaves on $\bar X.$ Here, $\underline{\Z(n)}$ is the constant sheaf of value $(2\pi i)^n\Z,$ $\Omega_X^\bullet$ is the de Rham complex of holomorphic forms on $X$ and $\Omega_{\bar X}^{\bullet\geq n}(\log Z)$ is the brutal truncation of the complex of meromorphic differential forms on $\bar X$ with at most logarithmic poles along $Z.$ The maps $\epsilon$ and $\iota$ are the canonical ones.
\end{defn}

As the notation suggests, Definition~\ref{defn: DB cohomology for smooth schemes} is independent of the choice of the good compactification $\bar X$ (\cite[Lemma 2.8]{Esnault-Viehweg}). We may actually express the Deligne-Beilinson cohomology of a smooth scheme $X$ as hypercohomology of $X$ itself with coefficients in a complex of sheaves on the big Zariski site $Sm/\C_{Zar}.$ Here is Beilinson's construction of such a complex.

Consider the category $\Pi$ of good compactifications whose objects are good compactifications $j: T\hookrightarrow\bar T$ ($T$ is any smooth scheme) and morphisms are commutative diagrams
\begin{displaymath}
\xymatrix{ T \ar@{^{(}->}[r]^j \ar[d] & \bar T \ar[d]\\
 T' \ar@{^{(}->}[r]^{j'}  & \bar T' }
\end{displaymath}
We would like to define a suitable notion of analytic sheaves on $\Pi.$ 

We start by defining an analytic sheaf on a good compactification $j: T\hookrightarrow \bar T$ as a pair of analytic sheaves $\mathcal F$ on $T$ and $\bar{\mathcal F}$ on $\bar T$ together with a sheaf morphism $\phi: \bar{\mathcal F}\longrightarrow j_*\mathcal F.$ The analytic sheaves on good compactification $j: T\hookrightarrow \bar T$ form the category $Sh_{an}(\bar T,T)$ with morphisms pairs $(\bar f:\bar{\mathcal F}\longrightarrow \bar{\mathcal F'},f: \mathcal F\longrightarrow \mathcal F')$ of maps of analytic sheaves that make the diagram
\begin{displaymath}
\xymatrix{ \bar{\mathcal F} \ar[r]^\phi \ar[d]_{\bar f} & j_*\mathcal F \ar[d]^{j_*f}\\
\bar{\mathcal F'} \ar[r]^{\phi'} & j_*\mathcal F'}
\end{displaymath}
commute. 

The category $Sh_{an}(\bar T,T)$ has enough injectives. Indeed, if $\bar i:\bar{\mathcal F}\hookrightarrow \bar{\mathcal I}$ and $i:\mathcal F\hookrightarrow \mathcal I$ are imbeddings into injective sheaves, the map 
\begin{equation}\label{injective imbedding}
(\bar i\oplus (j_*(i)\circ\phi),i):(\bar{\mathcal F},\mathcal F,\phi)\longrightarrow (\bar{\mathcal I}\oplus j_*\mathcal I,\mathcal I, pr_2)
\end{equation}
is an imbedding into an injective object in $Sh_{an}(\bar T,T)$ (\cite[4.2]{Esnault-Viehweg}). In particular, if $\bar{\mathcal F}\longrightarrow(\bar{\mathcal I}^\bullet,\bar d)$ and $\mathcal F\longrightarrow(\mathcal I^\bullet,d)$ are injective resolutions, then 
$$(\bar{\mathcal F}, 0,0)\longrightarrow (\bar{\mathcal I}^\bullet,0,0)$$ and 
$$(0,\mathcal F,0)\longrightarrow (j_*\mathcal I^0,\mathcal I^0,id)\buildrel{\delta^0}\over\longrightarrow(j_*\mathcal I^0\oplus j_*\mathcal I^1,\mathcal I^1,pr_2)\buildrel{\delta^1}\over\longrightarrow (j_*\mathcal I^1\oplus j_*\mathcal I^2,\mathcal I^2,pr_2)\buildrel{\delta^2}\over\longrightarrow (j_*\mathcal I^2\oplus j_*\mathcal I^3,\mathcal I^3,pr_2)\longrightarrow\cdots$$
are injective resolutions in $Sh_{an}(\bar T,T).$ Here, the differential $\delta^n$ sends 
$$(x\oplus y,z)\in (j_*\mathcal I^{n-1}\oplus j_*\mathcal I^n,\mathcal I^n,pr_2)$$ to 
$$(-j_*(d)(x)+y\oplus j_*(d)(y),d(z))\in (j_*\mathcal I^{n}\oplus j_*\mathcal I^{n+1},\mathcal I^{n+1},pr_2).$$
Note that these descriptions are still valid even when $\bar{\mathcal F}$ and $\mathcal F$ are replaced with complexes of sheaves $\bar{\mathcal F}^\bullet$ and $\mathcal F^\bullet$ and $\bar{\mathcal I}^\bullet$ and $\mathcal I^\bullet$ with their injective resolutions; i.e. 
\begin{equation}\label{explicit resolution first}
(\bar{\mathcal F}^\bullet,0,0)\longrightarrow (\bar{\mathcal I}^\bullet,0,0)
\end{equation}
and 
\begin{equation}\label{explicit resolution}
(0,\mathcal F^\bullet,0)\longrightarrow (j_*\mathcal I^\bullet[-1]\oplus j_*\mathcal I^\bullet,\mathcal I^\bullet,pr_2) 
\end{equation}
are quasi-isomorphisms. 

A collection of analytic sheaves $\{(\bar{\mathcal F}_{\bar T},\mathcal F_T,\phi_j)\}_{j:T\hookrightarrow\bar T}$ on $j:T\hookrightarrow\bar T$ together with a morphism $f^\sharp:(\bar{\mathcal F}_{\bar T'},\mathcal F_{T'},\phi_{j'})\longrightarrow (\bar f_*\bar{\mathcal F}_{\bar T},f_*\mathcal F_T,f_*\phi_j)$ for each morphism $(f,\bar f):(T\buildrel j\over\hookrightarrow\bar T)\longrightarrow (T'\buildrel j'\over\hookrightarrow\bar T')$ in $\Pi$ such that $(f\circ g)^\sharp=g^\sharp\circ f^\sharp$ and $id^\sharp=id$ is called an analytic sheaf on $\Pi.$ The category of analytic sheaves on $\Pi$ is written as $Sh_{an}(\Pi).$ Since analytic sheaves have functorial Godement injective imbeddings, the imbedding~(\ref{injective imbedding}) can be chosen functorially with respect to the morphisms of $\Pi.$ Thus, $Sh_{an}(\Pi)$ has enough injectives.

Now, let $\tau\in\{triv, Zar, Nis, \acute et\}$ ($triv$ stands for the trivial topology) and consider the functor $\sigma_\tau: Sh_{an}(\Pi)\longrightarrow Sh_{\tau}(Sm/\C)$ that sends $\{(\bar{\mathcal F}_{\bar T}, \mathcal F_T,\phi_j)\}_{j:T\hookrightarrow \bar T}$ to the $\tau$-sheafification of the presheaf
$$Sm/\C\ni T\mapsto \colim_{j:T\hookrightarrow \bar T}\Gamma_{\bar T,T}(\bar{\mathcal F}_{\bar T}, \mathcal F_T,\phi_j),$$
where the colimit is taken over all good compactifications of $T$ and $\Gamma_{\bar T,T}$ is the left exact functor $\Gamma_{\bar T,T}(\bar{\mathcal F}_{\bar T}, \mathcal F_T,\phi_j)=\mathrm{ker}\{ \bar{\mathcal F}_{\bar T}(\bar T) \buildrel\phi_j\over\longrightarrow\mathcal F_T(T)\}.$ Since the colimit is taken over a directed system and sheafification is an exact functor, $\sigma_\tau$ is left exact. Deriving it, we obtain
$$R\sigma_\tau: D^+(Sh_{an}(\Pi))\longrightarrow D^+(Sh_{\tau}(Sm/\C)).$$

It is important for us that, given a complex of sheaves $\{(\bar{\mathcal F}_{\bar T}^\bullet, \mathcal F_T^\bullet,\phi_j^\bullet)\}_{j:T\hookrightarrow \bar T}$ in $Sh_{an}(\Pi),$ there is always a canonical map of presheaves
\begin{equation}\label{presheaf cohomology to sheaf cohomology}
R\sigma_{triv}(\{(\bar{\mathcal F}_{\bar T}^{\bullet},\mathcal F_T^\bullet,\phi_{j}^\bullet)\}_{j:T\hookrightarrow\bar T})\buildrel\text{sheafification}\over\longrightarrow R\sigma_\tau(\{(\bar{\mathcal F}_{\bar T}^{\bullet},\mathcal F_T^\bullet,\phi_{j}^\bullet)\}_{j:T\hookrightarrow\bar T}).
\end{equation}
For each $X\in Sm/\C,$ it induces a homomorphism
\begin{equation}\label{canonical map}
H^m(R\sigma_{triv}(\{(\bar{\mathcal F}_{\bar T}^{\bullet},\mathcal F_T^\bullet,\phi_{j}^\bullet)\}_{j:T\hookrightarrow\bar T})(X))\longrightarrow H_\tau^m(X, R\sigma_\tau(\{(\bar{\mathcal F}_{\bar T}^{\bullet},\mathcal F_T^\bullet,\phi_{j}^\bullet)\}_{j:T\hookrightarrow\bar T})).
\end{equation}
Note that, by definition, the question whether the map~(\ref{canonical map}) is an isomorphism for all $m$ is a question if the restriction of the complex of presheaves $R\sigma_{triv}(\{(\bar{\mathcal F}_{\bar T}^{\bullet},\mathcal F_T^\bullet,\phi_{j}^\bullet)\}_{j:T\hookrightarrow\bar T})$ to the small site $X_\tau$ satisfies $\tau$-descent.

It is convenient to have the following lemma.

\begin{lem}[{cf. \cite[Proposition 4.4 (a)]{Esnault-Viehweg}}]\label{lem: triangle EV4.4}
For any complex $\{(\bar{\mathcal F}_{\bar T}^\bullet,\mathcal F_T^\bullet,\phi_{j}^\bullet)\}_{j:T\hookrightarrow\bar T}$ of analytic sheaves on $\Pi,$ we have
$$R\sigma_\tau(\{(\bar{\mathcal F}_{\bar T}^\bullet,\mathcal F_T^\bullet,\phi_{j}^\bullet)\}_{j:T\hookrightarrow\bar T})=cone\big(R\sigma_\tau(\{(\bar{\mathcal F}_{\bar T}^\bullet,0,0)\}_{j:T\hookrightarrow\bar T})\buildrel R\sigma_\tau(\phi_{j}^\bullet)\over\longrightarrow R\sigma(\{(0,\mathcal F_T^\bullet[1],0)\}_{j:T\hookrightarrow\bar T})\big)[-1].$$
\end{lem}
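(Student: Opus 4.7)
The plan is to read off the desired cone decomposition from a short exact sequence in $Sh_{an}(\Pi)$ and the induced distinguished triangle after applying the derived functor $R\sigma_\tau$. Concretely, the triple $(\bar{\mathcal F}_{\bar T}^\bullet, \mathcal F_T^\bullet, \phi_j^\bullet)$ sits in a termwise split short exact sequence
\begin{equation*}
0 \longrightarrow (0, \mathcal F_T^\bullet, 0) \longrightarrow (\bar{\mathcal F}_{\bar T}^\bullet, \mathcal F_T^\bullet, \phi_j^\bullet) \longrightarrow (\bar{\mathcal F}_{\bar T}^\bullet, 0, 0) \longrightarrow 0
\end{equation*}
in $Sh_{an}(\Pi)$, where the first map is the inclusion into the second factor and the second map is the projection onto the first factor; the morphism-of-$\Pi$-compatibility is immediate since each arrow is a coordinate inclusion/projection. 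Applying $R\sigma_\tau$ yields a distinguished triangle in $D^+(Sh_\tau(Sm/\C))$, which after rotation is exactly the cone identification claimed, provided the connecting morphism is $R\sigma_\tau(\phi_j^\bullet)$.

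To pin down the connecting morphism, I would compute $R\sigma_\tau$ using the functorial Godement injective resolutions described just before the lemma. For $(\bar{\mathcal F}_{\bar T}^\bullet, 0, 0)$ the resolution is $(\bar{\mathcal I}^\bullet, 0, 0)$, and for $(0, \mathcal F_T^\bullet, 0)$ it is the explicit $(j_*\mathcal I^\bullet[-1]\oplus j_*\mathcal I^\bullet, \mathcal I^\bullet, pr_2)$ recalled in display~(\ref{explicit resolution}). A direct injective resolution of $(\bar{\mathcal F}_{\bar T}^\bullet, \mathcal F_T^\bullet, \phi_j^\bullet)$ can be obtained by the Godement construction applied via (\ref{injective imbedding}), and a short inspection shows it is naturally isomorphic to the mapping cone of the chain map
\begin{equation*}
(\bar{\mathcal I}^\bullet, 0, 0) \xrightarrow{\;(j_*(\cdot)\circ\phi_j^\bullet,\,0)\;} (j_*\mathcal I^\bullet, \mathcal I^\bullet[1], pr_2)[-1],
\end{equation*}
shifted by $[-1]$. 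This is precisely the candidate cone appearing in the statement, and the map inducing the cone is, by construction, a representative of $R\sigma_\tau(\phi_j^\bullet)$ in the derived category after applying $\sigma_\tau$ and passing to the appropriate shift.

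Having established that the two candidates for a resolution agree up to quasi-isomorphism, the distinguished triangle from the short exact sequence above has its connecting morphism represented by $\sigma_\tau$ applied to $\phi_j^\bullet$ at the level of the chosen resolutions. Rotating the triangle
\begin{equation*}
R\sigma_\tau(0,\mathcal F_T^\bullet,0) \to R\sigma_\tau(\bar{\mathcal F}_{\bar T}^\bullet, \mathcal F_T^\bullet, \phi_j^\bullet) \to R\sigma_\tau(\bar{\mathcal F}_{\bar T}^\bullet, 0, 0) \xrightarrow{R\sigma_\tau(\phi_j^\bullet)} R\sigma_\tau(0,\mathcal F_T^\bullet[1],0)
\end{equation*}
gives the desired formula, since the shift $\mathcal F_T^\bullet \mapsto \mathcal F_T^\bullet[1]$ is built into the cone convention used in the statement.

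The main obstacle I anticipate is purely bookkeeping: correctly matching signs and shifts so that the boundary operator of the triangle, which a priori is just the snake-lemma connecting morphism, is literally $R\sigma_\tau(\phi_j^\bullet)$ and not some twist thereof. One has to check that the Godement-style functorial injective resolution of the triple truly splits, up to homotopy, as a cone built from $\phi_j^\bullet$, and that the induced map of resolutions commutes with $\sigma_\tau$ strictly (which is automatic because $\sigma_\tau$ preserves direct sums and takes injectives to objects whose cohomology in positive degree vanishes after $\tau$-sheafification, following the same reasoning as in \cite[Proposition 4.4(a)]{Esnault-Viehweg}).
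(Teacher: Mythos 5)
Your argument is correct and in substance the same as the paper's: both reduce to the explicit injective resolutions by objects of the form $(\bar{\mathcal I}\oplus j_*\mathcal I,\mathcal I,pr_2)$, on which $\Gamma_{\bar T,T}$ visibly produces the mapping cone built from $\phi_j^\bullet$. Your termwise-split short exact sequence is a harmless repackaging; the real content in either version is the identification of the connecting map with $R\sigma_\tau(\phi_j^\bullet)$ via those resolutions, which you carry out correctly.
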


\begin{proof}
It is enough to observe the identity for injective sheaves of the form $\{(\bar{\mathcal I}_{\bar T}\oplus j_*\mathcal I_T,\mathcal I_T, pr_2)\}_{j:T\hookrightarrow \bar T},$ where $\bar{\mathcal I}_{\bar T}$ (resp., $\mathcal I_T$) is an injective sheaf on $\bar T$ (resp., $T$) because every sheaf in $Sh_{an}(\Pi)$ can be imbedded into an injective sheaf of this form.
\end{proof}

Here is a list of complexes of sheaves of our concern in this article.
\begin{enumerate}
\item $\{DB(n)_{\bar T, T}\}_{T\hookrightarrow \bar T}:=\{(\Omega_{\bar T}^{\bullet\geq n}(\log (\bar T\setminus T)), cone(\underline{\Z(n)}\buildrel\epsilon\over\longrightarrow \Omega_T^\bullet),-\iota)\}_{T\hookrightarrow \bar T}$
\item $\{DR_{\bar T, T}\}_{T\hookrightarrow \bar T}:=\{(0, \Omega_T^\bullet[1],0)\}_{T\hookrightarrow \bar T}$
\item $\{DR_{\bar T, T}^{\geq n}\}_{T\hookrightarrow \bar T}:=\{(\Omega_{\bar T}^{\bullet\geq n}(\log(\bar T\setminus T)), 0,0)\}_{T\hookrightarrow \bar T}$
\item $\{DR_{\bar T, T}^{<n}\}_{T\hookrightarrow \bar T}:=\{(\Omega_{\bar T}^{\bullet\geq n}(\log(\bar T\setminus T))[1], \Omega_T^\bullet[1],-\iota)\}_{T\hookrightarrow \bar T}$
\item $\{B(n)_{\bar T, T}\}_{T\hookrightarrow \bar T}:=\{(0, \underline{\Z(n)}[1],0)\}_{T\hookrightarrow \bar T}$
\item $\{B(\Z/a)_{\bar T,T}\}_{T\hookrightarrow \bar T}:=\{(0,\underline{\Z/a}[1],0)\}_{T\hookrightarrow \bar T}~~~\text{($a$ is an arbitrary positive integer)}.$
\end{enumerate}
In (v) and (vi), $\underline{\Z(n)}:=\underline{2n\pi\Z}$ and $\underline{\Z/a}$ denote the constant analytic sheaves. We write the respective images of these complexes under the functor $R\sigma_\tau: D^+(Sh_{an}(\Pi))\longrightarrow D^+(Sh_{\tau}(Sm/\C))$ as:
\begin{enumerate}
\item $DB(n)_{\tau}$
\item $DR_{\tau}$
\item $DR_{\tau}^{\geq n}$
\item $DR_{\tau}^{<n}$
\item $B(n)_{\tau}$
\item $B(\Z/a)_{\tau}.$ 
\end{enumerate}

\begin{prop}\label{prop: triangles}
In $D^+(Sh_{\tau}(Sm/\C)),$ there are distinguished triangles:
\begin{equation}\label{triangle hodge}
DR_{\tau}^{\geq n}\longrightarrow DR_{\tau}\longrightarrow DR_{\tau}^{<n}\buildrel[+1]\over\longrightarrow,
\end{equation}
\begin{equation}\label{triangle DB}
DR_{\tau}^{<n}[-1]\longrightarrow DB(n)_{\tau}\longrightarrow B(n)_{\tau}\buildrel [+1]\over\longrightarrow, 
\end{equation}
\begin{equation}\label{triangle DB multiplication}
DB(n)_{\tau}\buildrel \times a\over\longrightarrow DB(n)_{\tau}\longrightarrow B(\Z/a)_{\tau}\buildrel [+1]\over\longrightarrow,
\end{equation}
where $a$ is a positive integer.
\end{prop}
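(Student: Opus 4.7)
The plan is to reduce all three triangles to statements in $D^+(Sh_{an}(\Pi))$ and then transport them through the triangulated functor $R\sigma_\tau$. The key tool will be Lemma~\ref{lem: triangle EV4.4}, which rewrites $R\sigma_\tau$ of any complex of triples as a cone of two one-coordinate pieces; combined with the elementary short exact sequences
\begin{equation*}
0 \to \underline{\Z(n)} \buildrel \epsilon \over\longrightarrow \Omega_T^\bullet \longrightarrow \mathrm{cone}(\epsilon) \to 0 \qquad \text{and} \qquad 0 \to \underline{\Z(n)} \buildrel \times a \over\longrightarrow \underline{\Z(n)} \longrightarrow \underline{\Z/a} \to 0
\end{equation*}
of analytic sheaves on $T$, the three distinguished triangles will follow by the octahedral axiom and the $3\times3$ lemma.

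For (\ref{triangle hodge}), I would apply Lemma~\ref{lem: triangle EV4.4} directly to $DR^{<n}_{\bar T, T} = (\Omega_{\bar T}^{\bullet\geq n}(\log(\bar T\setminus T))[1], \Omega_T^\bullet[1], -\iota)$. Absorbing the shifts and passing the outer $[-1]$ through the cone yields $DR_\tau^{<n} \simeq \mathrm{cone}(DR_\tau^{\geq n} \to DR_\tau)$, which is precisely the desired triangle.

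For (\ref{triangle DB}), set $X := R\sigma_\tau(0, \mathrm{cone}(\underline{\Z(n)}\to\Omega_T^\bullet)[1], 0)$. Lemma~\ref{lem: triangle EV4.4} applied to $DB(n)_{\bar T, T}$ gives $DB(n)_\tau \simeq \mathrm{cone}(DR_\tau^{\geq n} \to X)[-1]$. On the other hand, the first short exact sequence above, viewed as a triangle of triples concentrated in the $\mathcal F$-coordinate, produces after $R\sigma_\tau$ a distinguished triangle $B(n)_\tau \to DR_\tau \to X \buildrel[+1]\over\longrightarrow$. The octahedral axiom applied to the composable pair $DR_\tau^{\geq n} \to DR_\tau \to X$ then yields
\begin{equation*}
\mathrm{cone}(DR_\tau^{\geq n}\to DR_\tau) \longrightarrow \mathrm{cone}(DR_\tau^{\geq n}\to X) \longrightarrow \mathrm{cone}(DR_\tau\to X) \buildrel[+1]\over\longrightarrow,
\end{equation*}
and after identifying the three cones with $DR_\tau^{<n}$, $DB(n)_\tau[1]$, and $B(n)_\tau[1]$ respectively (via (\ref{triangle hodge}) and the rotation of the triangle defining $X$), a shift by $[-1]$ produces (\ref{triangle DB}).

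Finally, for (\ref{triangle DB multiplication}) I would apply multiplication by $a$ to the triangle $DB(n)_\tau \to DR_\tau^{\geq n} \to X \buildrel[+1]\over\longrightarrow$ from the previous paragraph and invoke the $3\times3$ lemma. Since $DR_\tau^{\geq n}$ and $DR_\tau$ are $\C$-linear, $\times a$ is invertible on them and its cone vanishes; on $B(n)_\tau$ the cone is $B(\Z/a)_\tau$ by $R\sigma_\tau$ of the second short exact sequence above; consequently the cone of $\times a$ on $X$ must be $B(\Z/a)_\tau[1]$, whence the cone on $DB(n)_\tau$ is $B(\Z/a)_\tau$, giving (\ref{triangle DB multiplication}). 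The main bookkeeping hazard throughout will be to keep the shifts and the signs of the structure maps (notably the $-\iota$ appearing in $DR^{<n}$ and $DB(n)$) consistent, so that the morphisms to which the octahedral axiom is applied are genuinely composable; any remaining sign ambiguities only negate maps inside distinguished triangles and hence do not affect distinguishedness.
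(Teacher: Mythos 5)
Your argument is correct and follows the same route as the paper, whose entire proof is the one-line remark that the proposition is immediate from Lemma~\ref{lem: triangle EV4.4}; you have simply carried out in full the cone identifications, octahedron, and $\times a$ comparison that the paper leaves implicit. The only point worth noting is that, since $R\sigma_\tau$ is computed by the explicit injective resolutions~(\ref{explicit resolution first}) and~(\ref{explicit resolution}), all the triangles you manipulate arise from genuine mapping cones of complexes, so the $3\times3$ step is unproblematic.
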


\begin{proof}
It is immediate from Lemma~\ref{lem: triangle EV4.4}.
\end{proof}

\begin{thm}[Etale descent; cf.~{\cite[1.6.5]{Beilinson},~\cite[5.5]{Esnault-Viehweg},~\cite[Theorem 2.8(vi)]{Holmstrom-Scholbach}}]\label{thm: etale descent}
The complexes of presheaves $DB(n)_{triv}, DR_{triv}, DR_{tirv}^{\geq n}, DR^{<n}_{triv}, B(n)_{triv},$ and $B(\Z/a)_{triv}$ satisfy etale descent on any $X\in Sm/\C;$ and for $\tau\in\{Zar, Nis, \acute et\},$ the map~(\ref{canonical map}) is identified respectively with
\begin{enumerate}
\item $H_{\tau}^m(X,DB(n)_{\tau})\buildrel\cong\over\longleftarrow H_\DB^m(X,\Z(n)),$
\item $H_{\tau}^m(X,DR_{\tau})\buildrel\cong\over\longleftarrow H_{dR}^m(X,\C),$
\item $H_{\tau}^m(X,DR_{\tau}^{\geq n})\buildrel\cong\over\longleftarrow H_{an}^m(\bar X,\Omega_{\bar X}^{\bullet\geq n}(\log \bar X\setminus X)),$
\item $H_{\tau}^m(X,DR^{<n}_{\tau})\buildrel\cong\over\longleftarrow H_{dR}^m(X,\C)/F^nH_{dR}^m(X,\C),$
\item $H_{\tau}^m(X,B(n)_{\tau})\buildrel\cong\over\longleftarrow H_{B}^m(X,\Z(n)),$
\item $H_{\tau}^m(X,B(\Z/a)_{\tau})\buildrel\cong\over\longleftarrow H_{B}^m(X,\Z/a).$
\end{enumerate}
In {\rm (iv)}, $F^n$ signifies the $n$-th Hodge filtration.
\end{thm}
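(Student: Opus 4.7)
The plan is to reduce all six cases to two fundamental building blocks—Betti cohomology with constant coefficients (cases (v) and (vi)) and logarithmic Hodge cohomology (case (iii))—and then to bootstrap the remaining cases (ii), (iv), (i) from these via the distinguished triangles of Proposition~\ref{prop: triangles}.

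First I would identify $R\sigma_\tau$ on each of the six complexes at a smooth $X$. Using the explicit injective resolutions (\ref{explicit resolution first}) and (\ref{explicit resolution}) together with the functoriality of Godement resolutions, one computes $R\sigma_{triv}$ as a filtered colimit over good compactifications $j\colon X\hookrightarrow\bar X$; the system is directed by Hironaka, and by \cite[Lemma~2.8]{Esnault-Viehweg} the value stabilizes and equals, case by case, the analytic hypercohomology on $\bar X$ of the stated complex. Lemma~\ref{lem: triangle EV4.4} is what realizes (i) and (iv) as the expected mapping cones, yielding $H_\DB^m(X,\Z(n))$ and $H_{dR}^m(X,\C)/F^nH_{dR}^m(X,\C)$.

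Next I would verify étale descent of the underlying presheaves of complexes. For $B(n)_{triv}$ and $B(\Z/a)_{triv}$, an étale morphism of smooth $\C$-varieties is a local biholomorphism in the analytic topology and carries constant sheaves to constant sheaves, so any étale hypercover becomes an analytic hypercover on which constant-coefficient cohomology descends by the standard argument for sheaves on a topological space. For $DR_{triv}^{\geq n}$, I would exploit that $\Omega_{\bar X}^{\bullet\geq n}(\log Z)$ is a Zariski-local complex of coherent $\mathcal O_{\bar X}$-modules on the proper smooth $\bar X$; étale descent then follows from faithfully flat descent for quasi-coherent sheaves, combined with GAGA to pass between the analytic and algebraic incarnations. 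Descent for $DR_{triv}$ admits the same argument applied to $\Omega_X^\bullet$ on $X$ itself. Once descent is established for $R\sigma_{triv}$, the identification of the map (\ref{canonical map}) with an isomorphism for $\tau\in\{Zar,Nis,\acute et\}$ is automatic, since sheafification does not alter hypercohomology of a complex that already satisfies $\tau$-descent.

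Finally I would bootstrap: the Hodge triangle (\ref{triangle hodge}) propagates descent from (ii) and (iii) to (iv); the triangle (\ref{triangle DB}) then carries it from (iv) and (v) to (i); and (\ref{triangle DB multiplication}) with (i) reconfirms it for (vi). The hard part will be the compatibility of étale descent with the passage to the colimit over $\Pi$: the assignment $X\mapsto\bar X$ is not functorial for étale morphisms, so one must show that for a hypercover of $X$ in $Sm/\C_{\acute et}$ the colimit of log-Hodge cohomologies over good compactifications of the iterated fiber products assembles correctly. This is handled by invoking Hironaka to produce, for any étale cover $\{U_i\to X\}$, a compatible system of good compactifications of the $U_i$ and their products refining any given compactification of $X$; directedness of the resulting index system then intertwines the colimit with the \v Cech computation underlying descent.
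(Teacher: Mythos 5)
Your overall architecture (establish the constant-coefficient cases and the logarithmic Hodge case, then bootstrap (ii), (iv), (i) through the triangles of Proposition~\ref{prop: triangles}) matches the paper's, and your treatment of (v) and (vi) via local isomorphisms is essentially the paper's argument with the topology of surjective families of local isomorphisms. But there is a genuine gap at the heart of the proof, namely case (iii). You propose to get \'etale descent for $DR_{triv}^{\geq n}$ from ``faithfully flat descent for quasi-coherent sheaves, combined with GAGA.'' This cannot work: the complex $\Omega_{\bar X}^{\bullet\geq n}(\log \bar X\setminus X)$ lives on a compactification $\bar X$, and an \'etale cover $V\to U$ of the open varieties does not induce a flat (let alone \'etale) cover of good compactifications, so there is no cover of $\bar U$ along which to apply flat descent. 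More fundamentally, the cohomology being descended is $F^nH_{dR}^*(-,\C)$, and what descent requires is the exactness of sequences such as
$$\cdots\longrightarrow F^nH_{dR}^m(T,\C)\longrightarrow F^nH_{dR}^m(S,\C)\oplus F^nH_{dR}^m(U,\C)\longrightarrow F^nH_{dR}^m(V,\C)\longrightarrow\cdots$$
for Nisnevich distinguished squares, and the identification $F^nH_{dR}^m(U,\C)\cong F^nH_{dR}^m(V,\C)^G$ for finite Galois covers. Knowing exactness of the corresponding unfiltered sequences (case (ii)) does not formally give exactness of the filtered pieces; the paper obtains it from the strict compatibility of morphisms of mixed Hodge structures with the Hodge filtration (\cite[Th\'eor\`eme 2.3.5~(iii)]{DeligneII}), together with the injectivity of $H_{an}^i(\bar X,\Omega_{\bar X}^{\bullet\geq n}(\log))\to H_{an}^i(\bar X,\Omega_{\bar X}^{\bullet}(\log))$ from \cite[Scholie 8.1.9~(v)]{Deligne}. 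Neither ingredient appears in your proposal, and no amount of coherent descent or GAGA substitutes for them.

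Two smaller points. First, the ``hard part'' you single out at the end --- assembling compatible good compactifications over a hypercover --- is not where the difficulty lies; the paper sidesteps general hypercovers entirely by using the Brown--Gersten and Cisinski--D\'eglise criteria, which reduce \'etale descent to Nisnevich distinguished squares plus finite Galois covers, and the colimit over $\Pi$ is already handled by the directedness of the system of good compactifications and \cite[Th\'eor\`eme 3.2.5]{DeligneII}. Second, your route to (ii) through quasi-coherent descent is likewise a detour: $DR_{triv}$ computes $H_B^m(X,\C)$ via the holomorphic Poincar\'e lemma, so it is handled exactly like (v) and (vi) by the local-isomorphism argument, with no need for GAGA.
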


\begin{proof}
We proceed as in the proof of \cite[Theorem 2.8(vi)]{Holmstrom-Scholbach}. Let us start with (ii), (v) and (vi). Let $cl$ denote the topology on $Sm/\C$ generated by surjective families of local isomorphisms. It is a finer topology than any of the $\tau$-topologies \cite[(4.0)]{SGA Artin}. Suppose $\mathcal F^\bullet$ is a complex of sheaves on $(Sm/\C)_{an}$ and we write its restriction to $T\in Sm/\C$ as $\mathcal F_T^\bullet.$ Consider the complex $\{(0,\mathcal F_T^\bullet,0)\}_{T\hookrightarrow\bar T}$ of analytic sheaves on $\Pi.$ Suppose $\mathcal I^\bullet$ is an injective resolution of $\mathcal F^\bullet$ in $Sh_{cl}(Sm/\C).$ Since an injective resolution in $Sh_{cl}(\Pi)$ is still an injective resolution in $Sh_{an}(\Pi)$ (note that any $cl$-cover has a refinement by an analytic cover) by the explicit description~(\ref{explicit resolution}) of injective resolutions in $Sh_{an}(\Pi),$ we have 
$$R\sigma_{\tau}(\{(0,\mathcal F_T^\bullet,0)\}_{T\hookrightarrow \bar T})=\mathcal I^\bullet[-1],$$
where $\mathcal I^\bullet$ on the right hand side is regarded as a complex of $\tau$-sheaves.

The isomorphisms (ii), (v) and (vi) are now clear. For example, for (ii), choose an injective resolution $\Omega^\bullet\longrightarrow \mathcal I^\bullet$ in $Sh_{cl}(Sm/\C).$ Since $\mathcal I^\bullet$ is still a complex of injective sheaves when restricted to the $\tau$-site, both left and right sides of (ii) is the cohomology of $\mathcal I^\bullet(X).$

For the remaining (i), (iii) and (iv), (i) and (iv) follow from (iii) and the already proven (ii), (v) and (vi). Let us explain how (iv) follows. For any smooth scheme $X,$ the distinguished triangle~(\ref{triangle hodge}) in $D^+(Sh_\tau(Sm/\C))$ and the map~(\ref{canonical map}) give rise to the commutative diagram with exact rows
\begin{displaymath}
\xymatrix{\cdots\ar[r] & H^m(DR_{triv}^{\geq n}(X)) \ar[r] \ar[d]_{f_1} & H^m(DR_{triv}(X)) \ar[r] \ar[d]_{f_2} & H^m(DR_{triv}^{<n}(X)) \ar[d]_{f_3}\\
\cdots\ar[r] & H_\tau^m(X,DR_\tau^{\geq n}) \ar[r] & H_\tau^m(X,DR_\tau) \ar[r] & H_\tau^m(X,DR_\tau^{<n})}
\end{displaymath}
\begin{displaymath}
\xymatrix{\ar[r] & H^{m+1}(DR_{triv}^{\geq n}(X)) \ar[r] \ar[d]_{f_4} & H^{m+1}(DR_{triv}(X)) \ar[r] \ar[d]_{f_5} & \cdots \\
\ar[r] & H_\tau^{m+1}(X,DR_\tau^{\geq n}) \ar[r] & H_\tau^{m+1}(X,DR_\tau) \ar[r] & \cdots}
\end{displaymath}
Let us note that 
$$H^m(DR_{triv}^{\geq n}(X))=R^m(\colim\Gamma_{\bar X,X})(\{DR_{\bar T, T}^{\geq n}\}_{T\hookrightarrow \bar T})\cong \colim H_{an}^m(\bar X,\Omega_{\bar X}^{\bullet\geq n}(\log \bar X\setminus X))\cong H_{an}^m(\bar X,\Omega_{\bar X}^{\bullet\geq n}(\log \bar X\setminus X))$$
by the independence of the Hodge filtration from the choice of a good compactification (\cite[Th\'eor\`me 3.2.5]{DeligneII}). Moreover, since the canonical map $H_{an}^i(\bar X,\Omega_{\bar X}^{\bullet\geq n}(\log\bar X\setminus X))\longrightarrow H_{an}^i(\bar X,\Omega_{\bar X}^\bullet(\log\bar X\setminus X))$ is injective for all $i$ (\cite[Scholie 8.1.9~(v)]{Deligne}), we have isomorphisms 
$$H^m(DR_{triv}^{<n}(X))\cong  H^m(DR_{triv}(X))/H^m(DR_{triv}^{\geq n}(X))    \cong H_{dR}^m(X,\C)/F^nH_{dR}^m(X,\C).$$

Now, (iv) follows from (ii) and (iii) by the 5-lemma applied to the above diagram. The isomorphism (i) can also be obtained from (iv) and (v) by a similar argument with the triangle~(\ref{triangle DB}).  
  
Let us now prove (iii), or that the complex $DR_{triv}^{\geq n}$ of presheaves, when restricted to the small $\tau$-site on each $X\in Sm/\C,$ satisfies $\tau$-descent. Recall the following criteria:
\begin{itemize}
\item Zariski descent holds if for every $U, V\in X_{Zar},$ the square
\begin{displaymath}
\xymatrix{ DR_{triv}^{\geq n}(U\cup V) \ar[r] \ar[d] & DR_{triv}^{\geq n}(U) \ar[d] \\
DR_{triv}^{\geq n}(V) \ar[r] & DR_{triv}^{\geq n}(U\cap V)}
\end{displaymath}
is homotopy cartesian (\cite{Brown-Gersten}).
\item Nisnevich descent holds if, for any Nisnevich distinguished square 
\begin{displaymath}
\xymatrix{  V \ar[r] \ar[d] & S \ar[d]^f \\
U \ar[r]_j & T}
\end{displaymath}
in $X_{Nis}$ (i.e. $f$ is \'etale, $j$ is an open immersion, and $f$ induces an isomorphism $(Y\setminus V)_{red}\buildrel\cong\over\longrightarrow (X\setminus U)_{red}$), then the induced square
\begin{displaymath}
\xymatrix{ DR_{triv}^{\geq n}(T) \ar[r] \ar[d] & DR_{triv}^{\geq n}(U) \ar[d] \\
DR_{triv}^{\geq n}(S) \ar[r] & DR_{triv}^{\geq n}(V)}
\end{displaymath}
is homotopy cartesian (\cite[Theorem 3.3.2]{CD}).
\item Etale descent holds if in addition to the condition for Nisnevich descent, given any finite Galois cover $V\longrightarrow U$ with a group $G$ in $X_{\acute et},$ the induced chain map $DR_{triv}^{\geq n}(U)\longrightarrow DR_{triv}^{\geq n}(V)^G$ to the $G$-invariant part of $DR_{triv}^{\geq n}(V)$  is a quasi-isomorphism ([Ibid.,Theorem 3.3.23]). (Theorem 3.3.23 cited is valid with rational coefficients, but this is not a problem for us because, in constructing $DR_{triv}^{\geq n},$ we can use the Godement injective resolution of $\Omega_{\bar U}^\bullet(\log \bar U\setminus U),$ which is of rational coefficients.)
\end{itemize}

Obviously, we only need to check the conditions for Nisnevich distinguished squares and finite Galois covers. Since the cohomology of the complex $DR_{triv}^{\geq n}(S)$ of global sections over any scheme $S\in Sm/\C$ is nothing but $\colim_{S\buildrel{\text{good}}\over\hookrightarrow \bar S}H_{an}^*(S,\Omega_{\bar S}^{\bullet\geq n}(\log\bar S\setminus S))$ and this group canonically injects into $H_{dR}^*(S,\C)$ (\cite[Scholie 8.1.9~(v)]{Deligne}) with the image $F^n H_{dR}^*(S,\C),$ the condition for Nisnevich distinguished squares boils down to the exactness of the complex
$$\cdots\longrightarrow F^n H_{dR}^{m-1}(V,\C)\longrightarrow F^n H_{dR}^m(T,\C)\longrightarrow F^n H_{dR}^m(S,\C)\oplus F^n H_{dR}^m(U,\C)\longrightarrow F^n H_{dR}^{m}(V,\C)\longrightarrow\cdots $$
and the condition for finite Galois covers to the isomorphism
$$F^nH_{dR}^m(U,\C)\longrightarrow F^nH_{dR}^m(V,\C)^G.$$
But, both of these follow from the strict compatibility of morphisms of mixed Hodge structures (\cite[Th\'eor\`eme 2.3.5~(iii)]{DeligneII}) because we know, for example by (ii), that the complex 
$$\cdots\longrightarrow H_{dR}^{m-1}(V,\C)\longrightarrow H_{dR}^m(T,\C)\longrightarrow H_{dR}^m(S,\C)\oplus H_{dR}^m(U,\C)\longrightarrow F^n H_{dR}^{m}(V,\C)\longrightarrow\cdots $$
is exact and the map
$$H_{dR}^m(U,\C)\longrightarrow H_{dR}^m(V,\C)^G$$
is an isomorphism.
\end{proof}


\section{$eh$ descent for Deligne-Beilinson cohomology}\label{section: cdh method}

We extend the definition of Deligne-Beilinson cohomology $H_{\DB}^m(X,\Z(n))$ (Definition~\ref{defn: DB cohomology for smooth schemes}) to arbitrary (complex) schemes and also define its compactly supported version. This is done by interpreting Deligne-Beilinson cohomology as eh hypercohomology. We shall see that our generalization behaves reasonably with respect to the Hodge structure of Betti cohomology (see Proposition~\ref{prop: reasonable} and Section~\ref{section: Intermediate Jacobians with compact supports}).

Let $\tau\in\{cdh, eh\}.$ By cdh or eh sheafification, we always mean the composition of the functors
$$Sh_{Zar}(Sm/k)\buildrel\text{$t$-sheafification}\over\longrightarrow Sh_t(Sm/k)\longrightarrow Sh_{\tau}(Sch/k),$$
where $t$ signifies the Grothendieck topology obtained by restricting the $\tau$-topology on $Sch/k$ to $Sm/k.$ The second functor is the left adjoint to the forgetful functor $Sh_{\tau}(Sch/k)\longrightarrow Sh_t(Sm/k);$ it is an equivalence of categories because resolution of singularities assures that any scheme has a smooth $\tau$-cover (\cite[proof of Lemma 3.6]{Friedlander-Voevodsky}). 

The argument of \cite[Theorem 3.6]{Geisser eh} proves the following theorem. See also \cite[Theorem 3.3.8]{CD}.

\begin{thm}\label{thm: cdh descent for DB}
Let $\mathcal F$ be a bounded below complex of \'etale sheaves on $Sm/\C.$ Suppose that for any abstract blow-up square
\begin{displaymath}
\xymatrix{ Z' \ar[r]^{i'} \ar[d]_{f'} & X' \ar[d]^f\\ 
Z \ar[r]_i & X}
\end{displaymath}
in $Sm/\C,$ there is a long exact sequence
$$\cdots\longrightarrow H_{\acute et}^{m-1}(Z',\mathcal F)\longrightarrow H_{\acute et}^m(X,\mathcal F)\longrightarrow H_{\acute et}^m(X',\mathcal F)\oplus H_{\acute et}^m(Z,\mathcal F)\longrightarrow H_{\acute et}^m(Z',\mathcal F)\longrightarrow\cdots.$$
Then, the canonical map
$$H_{\acute et}^m(X,\mathcal F)\longrightarrow H_{eh}^m(X,\mathcal F_{eh})$$
is an isomorphism for all $m.$ A similar statement holds for the Nisnevich and cdh topologies as well.   
\end{thm}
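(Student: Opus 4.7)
The plan is to identify \'etale cohomology with $eh$ cohomology via a descent argument carried out on the restriction to $Sm/\C.$ Since we work in characteristic zero, resolution of singularities implies that every $X\in Sch/\C$ admits a smooth $eh$-cover; consequently the restriction functor $Sh_{eh}(Sch/\C)\longrightarrow Sh_{t}(Sm/\C)$ (where $t$ denotes the Grothendieck topology on $Sm/\C$ whose covers are the $eh$-covers consisting of smooth schemes) is an equivalence and preserves hypercohomology of smooth objects. It therefore suffices to prove that, for every $X\in Sm/\C,$ the canonical map $R\Gamma_{\acute et}(X,\mathcal F)\longrightarrow R\Gamma_{t}(X,\mathcal F_{t})$ is a quasi-isomorphism.

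First I would replace $\mathcal F$ by a K-injective (equivalently, Godement-flasque) resolution $\mathcal G$ in $C^+(Sh_{\acute et}(Sm/\C)),$ so that the complex of global sections $\mathcal G(X)$ computes $H^{\ast}_{\acute et}(X,\mathcal F).$ Next I would invoke the Morel--Voevodsky/Brown--Gersten descent criterion for the $cd$-structure on $Sm/\C$ whose distinguished squares are \'etale covers together with abstract blow-up squares between smooth schemes: a complex of presheaves satisfies $t$-descent if and only if it (a) satisfies \'etale descent and (b) sends every abstract blow-up square to a homotopy cartesian square, equivalently yields a Mayer--Vietoris long exact sequence. For our $\mathcal G,$ condition (a) is automatic because it is a complex of flasque \'etale sheaves, and condition (b) is precisely the hypothesis of the theorem. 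Consequently $\mathcal G(X)$ also computes $R\Gamma_{t}(X,\mathcal F_{t})=R\Gamma_{eh}(X,\mathcal F_{eh}),$ which gives the desired isomorphism. The argument for the Nisnevich/cdh case is formally identical, replacing \'etale throughout by Nisnevich.

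The main obstacle is the descent criterion itself: one must verify that the $cd$-structure ``\'etale + abstract blow-ups of smooth schemes'' is complete, regular, and bounded in Voevodsky's sense, so that testing on the two classes of distinguished squares is enough to force full hypercover descent. This is \cite[Theorem 3.3.2]{CD} for the cdh/Nisnevich side, and its \'etale enhancement \cite[Theorem 3.3.23]{CD} (combined in the style of \cite[Theorem 3.6]{Geisser eh}) for the $eh$/\'etale side. Once these general inputs are granted, the conclusion is a direct consequence of the hypothesized Mayer--Vietoris sequence for smooth abstract blow-up squares.
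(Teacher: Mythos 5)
Your reduction to smooth schemes and the replacement of $\mathcal F$ by an injective resolution are fine, and they match the setup of the paper's proof. The problem is the step you yourself flag as ``the main obstacle'': the assertion that $eh$-descent for a complex of presheaves on $Sm/\C$ is equivalent to \'etale descent plus excision for smooth abstract blow-up squares. That equivalence \emph{is} the theorem, and it is not delivered by the references you cite. \cite[Theorem 3.3.2]{CD} concerns cd-structures in Voevodsky's sense, and the \'etale topology is not generated by a bounded cd-structure, so the completeness/regularity/boundedness machinery does not apply to the hybrid ``\'etale covers $+$ abstract blow-ups''; \cite[Theorem 3.3.23]{CD} does treat the \'etale side, but only with rational coefficients, as the paper itself points out in the proof of Theorem~\ref{thm: etale descent}, whereas here $\mathcal F$ is an arbitrary bounded-below complex of \'etale sheaves with no rationality assumption. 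Even on the cdh/Nisnevich side one must justify that the relevant cd-structure can be taken on $Sm/\C$ with all four corners of the distinguished squares smooth (your hypothesis only gives excision for such squares), which already requires knowing that every cdh (resp.\ $eh$) cover of a smooth scheme refines to a composition of blow-ups along smooth centers followed by a Nisnevich (resp.\ \'etale) cover.

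The paper closes exactly this gap by a direct minimal-counterexample argument following \cite[Theorem 3.6]{Geisser eh}. Setting $C^\bullet=cone(\mathcal F\to Ro_*\mathcal F_{eh})$, one assumes some $H_{\acute et}^i(X,C^\bullet)\neq 0$, chooses $i$ minimal over all smooth $X$, then $X$ of minimal dimension, and picks $u\neq 0$. Acyclicity of the $eh$ sheafification of $C^\bullet$ gives an $eh$ cover killing $u$; by \cite[Proposition 2.3]{Geisser eh} this refines to $U\xrightarrow{f}X'\xrightarrow{b}X$ with $b$ a composition of blow-ups along smooth centers and $f$ an \'etale cover. The hypothesized blow-up sequence for $\mathcal F$, combined with the abstract blow-up triangle for $eh$ hypercohomology via the octahedral axiom, yields one for $C^\bullet$; together with the minimality of $i$ and of $\dim X$ (the centers and exceptional loci have smaller dimension) it forces $H_{\acute et}^i(X,C^\bullet)\to H_{\acute et}^i(X',C^\bullet)$ to be injective, and the \v{C}ech spectral sequence for $f$ together with the minimality of $i$ forces $H_{\acute et}^i(X',C^\bullet)\to H_{\acute et}^i(U,C^\bullet)$ to be injective, contradicting the vanishing of $u$ on the cover. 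If you want to keep your formulation, you must supply precisely this induction (or an integral-coefficient descent criterion for the hybrid topology); as written, the proposal assumes the conclusion at its key step.
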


\begin{proof}
We prove the theorem for the eh topology. The proof for the cdh topology is verbatim.

Let $C^\bullet:=cone(\mathcal F\longrightarrow Ro_*\mathcal F_{eh}),$ where $o$ is the forgetful functor. We need to show that $H_{\acute et}^i(X,C^\bullet)$ vanishes for all smooth schemes $X$ and $i\in \Z.$ Suppose it is not true and there is some $X$ and $i$ for which the cohomology group $H_{\acute et}^i(X,C^\bullet)$ is non-trivial. First choose the smallest such integer $i$ when $X$ varies all smooth schemes, and then choose $X$ of the smallest dimension for which $H_{\acute et}^i(X,C^\bullet)$ is non-trivial. Let $u$ be a non-zero element of this group.

It follows from the definition that the eh sheafification of $C^\bullet$ is acyclic. Hence, the eh sheafification of the presheaf that sends $U\in Sm/\C$ to $H_{\acute et}^i(U,C^\bullet)$ is trivial. This means that there is some eh cover $T\longrightarrow X$ such that the image of $u$ under the homomorphism $H_{\acute et}^i(X,C^\bullet)\longrightarrow H_{\acute et}^i(T,C^\bullet)$ vanishes. 

The eh cover $T\longrightarrow X$ has a refinement (\cite[Proposition 2.3]{Geisser eh})
\begin{displaymath}
\xymatrix{ U \ar[r]^f \ar[d] & X' \ar[d]^b\\
T \ar[r] & X,}
\end{displaymath}
where $b$ is a series of blow-ups along smooth centers and $f$ is an \'etale cover. We claim that the image of $u$ in $H_{\acute et}^i(X',C^\bullet)$ is non-zero. We may obviously assume that the map $b$ consists of a single blow-up, say
\begin{displaymath}
\xymatrix{ Z'\ar[r]^{i'} \ar[d]_{b'} & X' \ar[d]^b\\
Z\ar[r]_i & X.}
\end{displaymath}
Let $\mathcal F\longrightarrow\mathcal I^\bullet$ and $\mathcal F_{eh}\longrightarrow \mathcal J^\bullet$ be injective resolutions in the respective topoi. Then, since $Ro_*\mathcal F_{eh}=\mathcal J^\bullet$ is still a complex of injective sheaves in the \'etale topology, the canonical map $C^\bullet= cone(\mathcal F\longrightarrow \mathcal J^\bullet) \longrightarrow cone(\mathcal I^\bullet\longrightarrow \mathcal J^\bullet)=:\mathcal K^\bullet$ is an injective resolution of $C^\bullet.$

The abstract blow-up triangle for eh cohomology and the \'etale cohomology of $\mathcal F$ and the change of sites give the distinguished triangles in the first and the second rows in the diagram:
\begin{displaymath}
\xymatrix{\mathcal I^\bullet(X) \ar[r] \ar[d] & \mathcal I^\bullet(X')\oplus\mathcal I^\bullet(Z) \ar[r] \ar[d] & \mathcal I^\bullet(Z')\ar[r]^-{[+1]} \ar[d] &\\
\mathcal J^\bullet(X) \ar[r] \ar[d] & \mathcal J^\bullet(X')\oplus\mathcal J^\bullet(Z) \ar[r] \ar[d] & \mathcal J^\bullet(Z')\ar[r]^--{[+1]} \ar[d] &\\
\mathcal K^\bullet(X) \ar[r] \ar[d]^-{[+1]} & \mathcal K^\bullet(X')\oplus\mathcal K^\bullet(Z) \ar[r] \ar[d]^-{[+1]} & \mathcal K^\bullet(Z')\ar[d]^-{[+1]} \ar[r]^-{[+1]} & \\
&&}
\end{displaymath}
By the octahedral axiom, the bottom row is also a distinguished triangle. Since the global section of $\mathcal K^\bullet$ calculates the \'etale hypercohomology of $C^\bullet,$ we obtain the long exact sequence
$$\longrightarrow H_{\acute et}^{i-1}(Z',C^\bullet)\longrightarrow H_{\acute et}^i(X,C^\bullet)\longrightarrow H_{\acute et}^i(X',C^\bullet)\oplus H_{\acute et}^i(Z,C^\bullet)\longrightarrow. $$
By our choice of $i$ and $X,$ the cohomology groups of $Z$ and $Z'$ of degree less than or equal to $i$ vanish. Therefore, the map 
$$H_{\acute et}^i(X,C^\bullet)\longrightarrow H_{\acute et}^i(X',C^\bullet)$$
is injective. Hence, the image $u'$ of $u$ in $H_{\acute et}^i(X',C^\bullet)$ is non-zero.

Now, consider the spectral sequence
$$E_1^{p,q}=H_{\acute et}^q(U\times_{X'}\cdots\times_{X'}U,C^\bullet)\Longrightarrow H_{\acute et}^{p+q}(X',C^\bullet).$$
By our choice of $i,$ $E_1^{p,q}\cong H_{\acute et}^q(U\times_{X'}\cdots\times_{X'}U,C^\bullet)=0$ if $q<i.$ Thus, the canonical map $H_{\acute et}^i(X',C^\bullet)\longrightarrow H_{\acute et}^i(U,C^\bullet)$ is injective, which means that the image of $u'$ is non-zero. This is a contradiction because $b\circ f:U\longrightarrow X$ factors through the eh cover $T\longrightarrow X$ on which $u$ vanishes. 
\end{proof}

\begin{cor}\label{cor: eh descent for smooth schemes}
Let $\mathcal F\in \{BD(n)_{Zar}, DR_{Zar}, DR_{Zar}^{\geq n}, DR_{Zar}^{<n}, B(n)_{Zar},B(\Z/a)_{Zar}\}.$ Then, the canonical maps
$$H_{Zar}^m(X,\mathcal F)\longrightarrow H_{cdh}^m(X,\mathcal F_{cdh})\longrightarrow H_{eh}^m(X,\mathcal F_{eh})$$
are isomorphisms for any $X\in Sm/\C.$
\end{cor}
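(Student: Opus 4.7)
The plan is to reduce to Theorem~\ref{thm: cdh descent for DB} by verifying, for each $\mathcal F$ on the list, the abstract blow-up long exact sequence in the \'etale cohomology of smooth schemes; after that, Theorem~\ref{thm: etale descent} identifies the left-hand side with the \'etale cohomology. For the cdh half of the statement, the same argument with the Nisnevich topology in place of the \'etale topology applies (Theorem~\ref{thm: cdh descent for DB} holds in both settings, and Theorem~\ref{thm: etale descent} gives the isomorphism $H_{Zar}^m(X,\mathcal F)\cong H_{Nis}^m(X,\mathcal F)$ as well).

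First I would dispatch the ``classical'' entries. For $B(n)_{Zar}$ and $B(\Z/a)_{Zar},$ Theorem~\ref{thm: etale descent}~(v), (vi) identifies \'etale cohomology with Betti cohomology with $\Z(n)$ or $\Z/a$ coefficients, and the desired long exact sequence for an abstract blow-up square of smooth schemes (with smooth center, by resolution) is the standard Mayer--Vietoris sequence for the blow-up of a smooth subvariety. For $DR_{Zar},$ by Theorem~\ref{thm: etale descent}~(ii) the same is the well-known blow-up sequence for de Rham cohomology. For $DR_{Zar}^{\geq n},$ I would note that by Theorem~\ref{thm: etale descent}~(iii) the cohomology computes $F^nH_{dR}^m$ of a smooth variety; the exactness of
$$\cdots\to F^nH_{dR}^{m-1}(Z',\C)\to F^nH_{dR}^m(X,\C)\to F^nH_{dR}^m(X',\C)\oplus F^nH_{dR}^m(Z,\C)\to F^nH_{dR}^m(Z',\C)\to\cdots$$
follows from the exactness of the corresponding de Rham sequence combined with strict compatibility of morphisms of mixed Hodge structures with the Hodge filtration (\cite[Th\'eor\`eme 2.3.5~(iii)]{DeligneII}), exactly as at the end of the proof of Theorem~\ref{thm: etale descent}.

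The remaining entries $DR_{Zar}^{<n}$ and $DB(n)_{Zar}$ are then obtained by a standard five-lemma argument. Using the distinguished triangles (\ref{triangle hodge}) and (\ref{triangle DB}) of Proposition~\ref{prop: triangles} in the \'etale topology, the abstract blow-up long exact sequences for $DR_{\tau}^{\geq n}, DR_\tau,$ and $B(n)_\tau$ already verified assemble into a ladder of long exact sequences in which the five-lemma forces the corresponding sequences for $DR_\tau^{<n}$ and then for $DB(n)_\tau$ to be exact as well. With the hypothesis of Theorem~\ref{thm: cdh descent for DB} thus verified for every $\mathcal F$ on the list, that theorem yields $H^m_{\acute et}(X,\mathcal F)\cong H^m_{eh}(X,\mathcal F_{eh})$ and $H^m_{Nis}(X,\mathcal F)\cong H^m_{cdh}(X,\mathcal F_{cdh}),$ and composing with Theorem~\ref{thm: etale descent} completes the proof.

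The main obstacle I anticipate is the $DR^{\geq n}$ case: one must make sure that the maps induced in the blow-up sequence are genuinely morphisms of mixed Hodge structures so that strict compatibility applies, i.e.\ that the Hodge-filtered pieces carry through the same connecting homomorphisms as the de Rham ones; this is a matter of functoriality of the MHS on cohomology of a blow-up pair, which is available from Deligne. Everything else is formal manipulation with triangles and the five-lemma.
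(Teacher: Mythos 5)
Your proposal is correct and follows essentially the same route as the paper: the paper's proof likewise verifies the abstract blow-up long exact sequences for the classical theories (reducing everything to the Betti blow-up sequence, strict compatibility of morphisms of mixed Hodge structures for the Hodge-filtered piece, and the distinguished triangles of Proposition~\ref{prop: triangles} for $DR^{<n}$ and $DB(n)$) and then invokes Theorems~\ref{thm: etale descent} and~\ref{thm: cdh descent for DB}. Your write-up just spells out the case-by-case details that the paper leaves implicit.
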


\begin{proof}
By Theorem~\ref{thm: etale descent}, it suffices to observe that Deligne-Beilinson, de Rham, Hodge filtration and Betti cohomologies have abstract blow-up sequences for smooth schemes. All these follow from the abstract blow-up sequence for Betti cohomology, strict compatibility of morphisms of mixed Hodge structures (\cite[Th\'eor\`eme 2.3.5~(iii)]{DeligneII}) and Proposition~\ref{prop: triangles}.
\end{proof}

\begin{cor}\label{cor: eh descent for arbitrary schemes}
For $\tau\in\{cdh, eh\}$ and any $X\in Sch/\C,$ there are canonical isomorphisms
\begin{enumerate}
\item $H_{\tau}^m(X,DR_{\tau})\cong H_{B}^m(X,\C),$
\item $H_{\tau}^m(X,B(n)_{\tau})\cong H_{B}^m(X,\Z(n)),$
\item $H_{\tau}^m(X,B(\Z/a)_{\tau})\cong H_{B}^m(X,\Z/a).$
\end{enumerate}
\end{cor}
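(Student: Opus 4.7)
The plan is to deduce the general case from Corollary~\ref{cor: eh descent for smooth schemes} by $\tau$-descent. The LHS $H_\tau^m(X, \mathcal{F}_\tau)$ for $\mathcal{F}_\tau\in\{DR_\tau, B(n)_\tau, B(\Z/a)_\tau\}$ is by construction the hypercohomology of a $\tau$-sheaf on $Sch/\C$ (via the equivalence $Sh_\tau(Sch/\C)\simeq Sh_t(Sm/\C)$ recalled at the start of this section), so it tautologically satisfies $\tau$-descent. Picking a smooth cdh-hypercover $X_\bullet\to X$ (available by resolution of singularities, as in \cite[proof of Lemma 3.6]{Friedlander-Voevodsky}), we obtain the convergent descent spectral sequence
\begin{equation*}
E_1^{p,q}=H_\tau^q(X_p,\mathcal{F}_\tau)\Longrightarrow H_\tau^{p+q}(X,\mathcal{F}_\tau),
\end{equation*}
and since each $X_p$ is smooth, Corollary~\ref{cor: eh descent for smooth schemes} identifies the $E_1$-term with the corresponding Betti cohomology group of $X_p$, naturally in $X_p$ via the canonical map~(\ref{canonical map}).

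The next step would be to prove that Betti cohomology itself has cdh descent on $Sch/\C$ (and eh descent for $\tau=eh$, which in addition requires \'etale descent on smooth schemes, already supplied by Theorem~\ref{thm: etale descent}). Cdh descent amounts to Nisnevich descent, which is classical, together with a Mayer-Vietoris long exact sequence
\begin{equation*}
\cdots\to H_B^{m-1}(Z',-)\to H_B^m(X,-)\to H_B^m(X',-)\oplus H_B^m(Z,-)\to H_B^m(Z',-)\to\cdots
\end{equation*}
for every abstract blow-up square as in Theorem~\ref{thm: cdh descent for DB}. This Mayer-Vietoris is standard in topology: the analytification $(X')^{\mathrm{an}}\to X^{\mathrm{an}}$ is a proper surjection that restricts to an isomorphism over $X^{\mathrm{an}}\setminus Z^{\mathrm{an}}$, so $X^{\mathrm{an}}$ is the homotopy pushout of the cospan $(X')^{\mathrm{an}}\leftarrow (Z')^{\mathrm{an}}\to Z^{\mathrm{an}}$.

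With cdh descent for Betti cohomology established, $H_B^{p+q}(X,-)$ is computed by the analogous hypercover spectral sequence, whose $E_1$-page coincides with the one from the first paragraph. The naturality in $X_p$ of the comparison~(\ref{canonical map}) ensures that this identification commutes with the hypercover differentials, so we obtain an isomorphism of spectral sequences and hence of abutments, yielding $H_\tau^m(X,\mathcal{F}_\tau)\cong H_B^m(X,-)$ uniformly in cases (i)--(iii). The main obstacle is essentially the abstract-blow-up Mayer-Vietoris for Betti cohomology on singular $\C$-schemes; once that topological input is in place, the rest is routine descent bookkeeping.
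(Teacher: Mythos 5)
Your proposal is correct and shares the overall architecture of the paper's proof (reduce to the smooth case of Corollary~\ref{cor: eh descent for smooth schemes} by descending along a smooth cdh hypercover $X_\bullet\to X$), but the key external input is different. The paper chooses the hypercover to be smooth and \emph{proper}, observes that $\mathcal F_\tau$ is the $\tau$-sheafification of an injective resolution $\mathcal I^\bullet$ taken in the $cl$-topology so that both sides are computed by the same total complex $Tot\,\mathcal I^\bullet(X_\bullet)$, and then identifies the Betti-side abutment with $H_B^m(X,-)$ in one stroke by Deligne's cohomological descent for proper hypercovers. You instead establish full cdh descent of Betti cohomology on $Sch/\C$ from Nisnevich descent plus the abstract blow-up Mayer--Vietoris, and then compare the two hypercover spectral sequences. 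Both routes are valid; yours trades Deligne's theorem for the proper-cdh-excision statement, which you justify topologically via the homotopy pushout. Two points in your version deserve to be made explicit: the blow-up squares you need involve possibly singular $X', Z, Z'$ (so you cannot restrict to squares in $Sm/\C$ as in Theorem~\ref{thm: cdh descent for DB}), and the homotopy-pushout claim uses both that the strict pushout maps homeomorphically onto $X^{an}$ (properness) and that $(Z')^{an}\hookrightarrow (X')^{an}$ is a cofibration (triangulability of complex analytic pairs); also, the passage from descent for distinguished squares to descent for arbitrary cdh hypercovers rests on the cd-structure formalism of \cite{CD}. All of these are standard, so the argument goes through; it is somewhat longer than the paper's but avoids invoking cohomological descent for proper hypercovers as a black box.
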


\begin{proof}
Let us prove (ii) for $\tau= Nis.$ The other cases are similar. 

Let $X_\bullet\longrightarrow X$ be a smooth proper cdh hypercover. Let $\mathcal I^\bullet$ be an injective resolution of $\Omega^\bullet$ in $Sh_{cl}(Sm/\C).$ Then, as in the proof of Theorem~\ref{thm: etale descent}, we have $DR_{cdh}=\mathcal I_{cdh}^\bullet.$ Thus, we have a sequence of isomorphisms
\begin{eqnarray*}
H_{cdh}^m(X, DR_{cdh}) &\cong& H_{cdh}^m(X_\bullet,DR_{cdh}) \\
&=& H_{cdh}^m(X_\bullet,\mathcal I_{cdh}^\bullet) \\
&\buildrel\text{Cor.~\ref{cor: eh descent for smooth schemes}}\over\cong& H_{Nis}^m(X_\bullet, \mathcal I^\bullet) \\
&\buildrel\text{(a)}\over\cong& H^m(Tot\mathcal I^\bullet(X_\bullet)) \\
&\cong& H_{an}^m(X_\bullet, \Omega^\bullet) \\
&\cong& H_{an}^m(X_\bullet, \underline\C) \\
&\buildrel\text{(b)}\over\cong& H_{B}^m(X, \C).
\end{eqnarray*}
The isomorphism (a) holds because $\mathcal I^\bullet$ is still a complex of injective sheaves when regarded in the Nisnevich topology, and (b) is due to Deligne's cohomological descent.
\end{proof}

With Theorem~\ref{thm: cdh descent for DB}, we can extend Definition~\ref{defn: DB cohomology for smooth schemes}.

\begin{defn}\label{defn: cdh definition of DB}
The {\bf Deligne-Beilinson cohomology (resp., that without compact supports) of an arbitrary scheme $X\in Sch/\C$} is defined as
$$H_{\DB}^m(X,\Z(n)):=H_{eh}^m(X,DB(n)_{eh}),$$
\begin{center}
(resp., $H_{c,\DB}^m(X,\Z(n)):=H_{c,eh}^m(X,DB(n)_{eh})$)
\end{center}
where the eh hypercohomology (resp., with compact supports) on the right hand side is as defined in \cite[Section 3]{Geisser eh}.
\end{defn}

\begin{rem}
The use of eh hypercohomology instead of cdh is simply a matter of choice. Indeed, Corollary~\ref{cor: eh descent for smooth schemes} gives an isomorphism between the spectral sequences 
$$E_{1,cdh}^{p,q}=H_{cdh}^q(X_p,DB(n)_{cdh})\Longrightarrow H_{cdh}^{p+q}(X,DB(n)_{cdh})$$
and
$$E_{1,eh}^{p,q}=H_{eh}^q(X_p,DB(n)_{eh})\Longrightarrow H_{eh}^{p+q}(X,DB(n)_{eh})$$
where $X_\bullet\longrightarrow X$ is a smooth cdh hypercover.

The equivalence of cdh and eh definitions for cohomology with compact supports follows from the proper case because both cdh and eh hypercohomologies with compact supports have localization sequences compatible under the change of topologies.
\end{rem}

Deligne-Beilinson cohomology is related to Hodge filtrations and integral Betti cohomology by long exact sequences.

\begin{prop}\label{prop: reasonable}
For an arbitrary scheme $X$ over $\C,$ there are long exact sequences
\begin{equation}\label{reasonable les}
\cdots\longrightarrow H_{B}^{m-1}(X,\C)/F^n\longrightarrow H_{\DB}^m(X,\Z(n))\longrightarrow H_{B}^m(X,\Z(n))\longrightarrow H_{B}^{m}(X,\C)/F^n\longrightarrow\cdots.
\end{equation}
\begin{equation}\label{reasonable les with compact supports}
\cdots\longrightarrow H_{c,B}^{m-1}(X,\C)/F^n\longrightarrow H_{c,\DB}^m(X,\Z(n))\longrightarrow H_{c,B}^m(X,\Z(n))\longrightarrow H_{c,B}^{m}(X,\C)/F^n\longrightarrow\cdots.
\end{equation}
\end{prop}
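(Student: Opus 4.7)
The plan is to apply the eh hypercohomology long exact sequence, and its compact-support analogue, to the distinguished triangle
$$DR_{eh}^{<n}[-1]\longrightarrow DB(n)_{eh}\longrightarrow B(n)_{eh}\buildrel [+1]\over\longrightarrow$$
of Proposition~\ref{prop: triangles}. By Definition~\ref{defn: cdh definition of DB} the middle term yields $H_{(c),\DB}^m(X,\Z(n))$. By Corollary~\ref{cor: eh descent for arbitrary schemes}(ii) — together with its compact-support analogue obtained by combining the localization sequence for eh hypercohomology with compact supports (\cite[Section 3]{Geisser eh}) with the usual Betti localization sequence for a compactification $X\hookrightarrow\bar X$ and applying the five-lemma — the rightmost term yields $H_{(c),B}^m(X,\Z(n))$. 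The shift $[-1]$ absorbs into the indexing so that $H_{eh}^{m-1}(X,DR_{eh}^{<n})$ becomes the leftmost term, and the content of the proposition reduces to the identification $H_{(c),eh}^m(X,DR_{eh}^{<n})\cong H_{(c),B}^m(X,\C)/F^n$.

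For a smooth scheme this identification follows from Theorem~\ref{thm: etale descent}(iv) together with Corollary~\ref{cor: eh descent for smooth schemes}. For an arbitrary scheme $X$, choose a smooth proper cdh hypercover $a: X_\bullet\to X$ and run the cohomological descent argument already used in the proof of Corollary~\ref{cor: eh descent for arbitrary schemes}. This yields a convergent spectral sequence
$$E_1^{p,q}=H_{eh}^q(X_p,DR_{eh}^{<n})=H_B^q(X_p,\C)/F^nH_B^q(X_p,\C)\Longrightarrow H_{eh}^{p+q}(X,DR_{eh}^{<n}),$$
where the second equality is the smooth case. Coming from the triangle~(\ref{triangle hodge}) applied termwise, the abutment identifies with the cokernel of $H_{eh}^m(X,DR_{eh}^{\geq n})\to H_{eh}^m(X,DR_{eh})=H_B^m(X,\C)$. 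Deligne's construction of the mixed Hodge structure on $H_B^m(X,\C)$ via smooth proper hypercovers, together with strict compatibility of morphisms of mixed Hodge structures (\cite[Th\'eor\`eme 2.3.5~(iii)]{DeligneII}) and $E_1$-degeneracy of the Hodge-to-de Rham spectral sequence, identifies this image with $F^nH_B^m(X,\C)$, giving the required identification in the non-compact case.

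For the compactly supported version, fix a compactification $j:X\hookrightarrow\bar X$ with closed complement $Z$. The definition of eh hypercohomology with compact supports in \cite[Section 3]{Geisser eh} gives a localization triangle linking $H_{c,eh}^*(X,-)$, $H_{eh}^*(\bar X,-)$ and $H_{eh}^*(Z,-)$, while Deligne's theory provides the parallel localization triangle of mixed Hodge structures linking $H_{c,B}^*(X,\C)$, $H_B^*(\bar X,\C)$ and $H_B^*(Z,\C)$, with Hodge filtrations mapping strictly compatibly. Since the proper versions for $\bar X$ and $Z$ are covered by the paragraph above, the five-lemma applied to these two localization sequences produces $H_{c,eh}^m(X,DR_{eh}^{<n})\cong H_{c,B}^m(X,\C)/F^nH_{c,B}^m(X,\C)$, whence (\ref{reasonable les with compact supports}) follows.

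The main obstacle is the identification of $H_{eh}^m(X,DR_{eh}^{\geq n})$ with $F^nH_B^m(X,\C)$ for singular $X$. Nothing formal forces a spectral sequence whose $E_1$-page consists of the Hodge pieces of the smooth terms $X_p$ to abut to the Hodge filtration on $X$; one genuinely needs Deligne's construction of the mixed Hodge structure via smooth proper hypercovers together with strict compatibility of the Hodge filtration. In the compact-support setting one must further verify that these good properties survive the localization sequence attached to a compactification.
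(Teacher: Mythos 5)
Your proposal is correct and follows the paper's strategy in all essentials: take eh hypercohomology of the triangle~(\ref{triangle DB}), identify the outer terms, and reduce everything to showing that $H_{(c),eh}^m(X,DR_{eh}^{\geq n})$ injects into $H_{(c),B}^m(X,\C)$ with image exactly $F^n$, which is where Deligne's hypercover construction of the mixed Hodge structure and \cite[Scholie 8.1.9~(v)]{Deligne} (equivalently, the strictness/degeneracy you invoke) must enter. You correctly flag this as the non-formal step. The one organizational difference is in the compactly supported case: the paper does not reduce the $F^n$-identification to the proper case by a five-lemma on localization sequences, but instead works directly with Deligne's mapping-cone definition of the mixed Hodge structure on $H_c^*$, building a compatible pair of smooth proper hypercovers of $\bar X$ and of $Z=\bar X\setminus X$ and identifying $H_{c,eh}^m(X,DR_{eh}^{\geq n})$ with the hypercohomology of the cone of log-de Rham complexes, so that the image being $F^nH_{c,B}^m(X,\C)$ holds \emph{by definition} of Deligne's filtration on $H_c$. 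Your route instead needs exactness of the localization sequence after passing to $-/F^n$ (which strictness does give) together with compatibility of the proper-case comparison isomorphisms with the connecting maps; this works but shifts the burden onto a naturality check that the paper's direct cone construction makes automatic. A small redundancy: the descent spectral sequence you introduce for the singular non-proper case is not actually used in your argument, since you immediately switch to the long exact sequence of the triangle~(\ref{triangle hodge}); and note that concluding $H_{eh}^m(X,DR_{eh}^{<n})\cong\mathrm{coker}\bigl(H_{eh}^m(X,DR_{eh}^{\geq n})\to H_B^m(X,\C)\bigr)$ already presupposes injectivity of the map in degree $m+1$, so the full statement you need (and do state at the end) is the isomorphism $H_{eh}^m(X,DR_{eh}^{\geq n})\cong F^nH_B^m(X,\C)$, not merely the computation of the image.
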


\begin{proof}
Taking the eh hypercohomology with or without compact supports of the eh sheafification of the distinguished triangle~(\ref{triangle DB}) in Proposition~\ref{prop: triangles}
\begin{center}
$DR_{eh}^{<n}[-1]\longrightarrow DB(n)_{eh}\longrightarrow B(n)_{eh}\buildrel [+1]\over\longrightarrow$ in $D^+(Sh_{eh}(Sch/\C)),$
\end{center}
we obtain the long exact sequence
$$\cdots\longrightarrow H_{(c),eh}^{m-1}(X,DR_{eh}^{<n})\longrightarrow H_{(c),\DB}^m(X,\Z(n))\longrightarrow H_{(c),cdh}^m(X,B(n)_{eh})\longrightarrow H_{(c),eh}^{m}(X,DR_{eh}^{<n})\longrightarrow\cdots.$$
Since $H_{eh}^m(X,B(n)_{eh})$ is isomorphic to $H_{B}^m(X,\Z(n))$ by Corollary~\ref{cor: eh descent for arbitrary schemes}~(ii), it suffices to show the isomorphisms $H_{(c),eh}^{m}(X,DR_{eh}^{<n})\cong H_{(c),B}^{m}(X,\C)/F^n$ and $H_{c,eh}^m(X,B(n)_{eh})\cong H_{c,B}^m(X,\Z(n))$ for all $m.$ Let $X\hookrightarrow \bar X$ be a compactification with the boundary $Z:=\bar X\setminus X.$

Let us deal with the isomorphism for integral Betti cohomology with compact supports first. Let $B(n)_{eh}\longrightarrow \mathcal I^\bullet$ be an injective resolution in $Sh_{eh}(Sch/\C).$ Then, the isomorphism $\Z^c(X)_{eh}\cong cone(\Z(Z)_{eh}\longrightarrow \Z(\bar X)_{eh})$ of eh sheaves (see \cite[Corollary 3.9]{Friedlander-Voevodsky}) induces the isomorphism 
$$H_{c,eh}^m(X,B(n)_{eh})\cong H^m(cone(\mathcal I^\bullet(\bar X)\longrightarrow \mathcal I^\bullet(Z))[-1]).$$
Since $\mathcal I^\bullet(\bar X)$ and $\mathcal I^\bullet(Z)$ respectively calculate Betti cohomology of $\bar X$ and $Z$ by Corollary~\ref{cor: eh descent for arbitrary schemes}~(ii), the right hand side is isomorphic to $H_{c,B}^m(X,\Z(n))$ by comparing localization sequences.

As for the isomorphism $H_{c,eh}^{m}(X,DR_{eh}^{<n})\cong H_{c,B}^{m}(X,\C)/F^n,$ let us first note that by the same argument as above, we may derive the isomorphism $H_{c,eh}^m(X, DR_{eh})\cong H_{c,B}^m(X,\C)$ for an arbitrary scheme $X$ follows from the case of proper schemes in Corollary~\ref{cor: eh descent for arbitrary schemes}~(i). Now, by the long exact sequence for eh cohomology with compact supports associated with the eh sheafification of the distinguished triangle~(\ref{triangle hodge}), it is enough to show that the canonical map 
$$H_{c,eh}^m(X,DR_{eh}^{\geq n})\buildrel f\over\longrightarrow H_{c,eh}^m(X, DR_{eh})\cong H_{c,B}^m(X,\C)$$
is injective and its image is nothing but the $n$-th Hodge filtration $F^n$ on $H_{c,sing}^m(X,\C).$ Both of these follow from the following commutative diagram below, in which we set: $i: Z:=\bar X\setminus X\hookrightarrow \bar X$ is the inclusion of the reduced closed subscheme $Z,$ $\epsilon: \bar X_\bullet\longrightarrow \bar X$ and $\eta:Z_\bullet\longrightarrow Z$ are smooth proper cdh hypercovers of, respectively, $\bar X$ and $Z$ with a commutative diagram (see \cite[(6.2.8)]{Deligne})
\begin{displaymath}
\xymatrix{ Z_\bullet \ar[r] \ar[d]_{\eta} & \bar X^\bullet \ar[d]^\epsilon \\
Z \ar[r]_i & \bar X}
\end{displaymath}
and $DR_{eh}^{\geq n}\longrightarrow \mathcal J^\bullet$ and $DR_{eh}\longrightarrow \mathcal K^\bullet$ are injective resolutions in $Sh_{eh}(Sch/\C).$ We shall also write the Godement (injective) resolution of an analytic sheaf $\mathcal F$ as $Gd^\bullet\mathcal F.$
\begin{equation}\label{bigabove}
\xymatrix{ H_{c,eh}^m(X,DR_{eh}^{\geq n}) \ar[r]^-f \ar[d]^-\cong & H_{c,eh}^m(X,DR_{eh}) \ar[d]_-\cong\\
H^m(cone(\mathcal J^\bullet(\bar X)\longrightarrow \mathcal J^\bullet(Z))[-1]) \ar[r] \ar[d]_-{cone(\epsilon^*,\eta^*)}^-\cong & H^m(cone(\mathcal K^\bullet(\bar X)\longrightarrow \mathcal K^\bullet(Z))[-1]) \ar[d]^-{cone(\epsilon^*,\eta^*)}_-\cong \\
H^m(cone(Tot\mathcal J^\bullet(\bar X_\bullet)\longrightarrow Tot\mathcal J^\bullet(Z_\bullet))[-1])  \ar[r] & H^m(cone(Tot\mathcal K^\bullet(\bar X_\bullet)\longrightarrow Tot\mathcal K^\bullet(Z_\bullet))[-1]) \\
H^m(cone(TotGd^\bullet\Omega_{\bar X^\bullet}^{\bullet\geq n}(\bar X_\bullet) \longrightarrow  TotGd^\bullet\Omega_{Z_\bullet}^{\bullet\geq n}(Z_\bullet) )) \ar[u]^-a_-\cong \ar[r] &  H^m(cone( TotGd^\bullet\Omega_{\bar X^\bullet}^{\bullet}(\bar X_\bullet) \longrightarrow  TotGd^\bullet\Omega_{Z_\bullet}^{\bullet}(Z_\bullet)  )[-1]) \ar[u]_-b^-\cong \\
H_{an}^m(\bar X, cone(R\epsilon_*\Omega_{\bar X_\bullet}^{\bullet\geq n}\longrightarrow i_*R\eta_*\Omega_{\bar Z_\bullet}^{\bullet\geq n})[-1]) \ar[r]_-g \ar[u]_-\cong & H_{an}^m(\bar X, cone(R\epsilon_*\Omega_{\bar X_\bullet}^\bullet\longrightarrow i_*R\eta_*\Omega_{\bar Z_\bullet}^\bullet)[-1]) \ar[u]^-\cong}
\end{equation}

The maps $a$ and $b$ are the canonical ones induced by the map~(\ref{presheaf cohomology to sheaf cohomology}). (Recall that $DR_{eh}^{\geq n}$ (resp., $DR_{eh}$) is the eh sheafification of the complex of presheaves $Sm/\C\ni U\mapsto\colim_{U\buildrel\text{good}\over\hookrightarrow\bar U}TotGd^\bullet\Omega_{\bar U}^{\bullet\geq n}(\log\bar U\setminus U)(\bar U)$ (resp., $U\mapsto Gd^\bullet\Omega^\bullet(U)$).) By the eh descent proved in Corollary~\ref{cor: eh descent for smooth schemes}, the maps $a$ and $b$ are isomorphisms. Now, the injectivity of $f$ and the desired property of its image follow because the map $g$ is injective by \cite[Scholie 8.1.9~(v)]{Deligne} and its image in $H_{c,cdh}^m(X,DR_{cdh})\cong H_{c,B}^m(X,\C)$ coincides with $F^nH_{c,B}^m(X,\C)$ by definition. This finishes the proof for (\ref{reasonable les with compact supports}).

For (\ref{reasonable les}), it suffices to show the isomorphism $H_{eh}^{m}(X,DR_{eh}^{<n})\cong H_{B}^{m}(X,\C)/F^n.$ By the same argument as above with the triangle~(\ref{triangle hodge}), it suffices to show that the canonical map
$$H_{eh}^m(X,DR_{eh}^{\geq n})\buildrel h\over\longrightarrow H_{eh}^m(X, DR_{eh})\cong H_{B}^m(X,\C)$$
is injective and its image agrees with the $n$-th Hodge filtration $F^n$ on $H_{B}^m(X,\C).$ Choose a commutative diagram 
\begin{displaymath}
\xymatrix{ X_\bullet \ar[r] \ar[d]_\zeta & \bar X_\bullet \ar[d]^{\bar\zeta}\\
X \ar[r] & \bar X}
\end{displaymath}
with smooth proper cdh hypercovers $\zeta$ and $\bar\zeta$ such that $X_i\longrightarrow \bar X_i$ is a good compactification for each $i.$ (For the existence of such hypercovers, see \cite[(6.2.8)]{Deligne} and also the proof of \cite[Theorem 4.7]{Conrad} for details.) Now, we have a commutative diagram
\begin{displaymath}
\xymatrix{ H_{eh}^m(X,DR_{eh}^{\geq n}) \ar[r]^-h \ar[d]^-\cong & H_{eh}^m(X,DR_{eh}) \ar[d]^-\cong\\
H^m(\mathcal J^\bullet(X)) \ar[d]_-{\zeta^*}^-\cong \ar[r] & H^m(\mathcal K^\bullet(X)) \ar[d]_-{\zeta^*}^-\cong  \\
H^m(Tot\mathcal J^\bullet(X_\bullet)) \ar[r]  & H^m(Tot\mathcal K^\bullet(X_\bullet)) \\
H^m(TotGd^\bullet\Omega_{\bar X}^{\bullet\geq n}(\log\bar X_\bullet\setminus X_\bullet)(\bar X_\bullet)) \ar[r] \ar[u]_-\cong^-c  &  H^m(TotGd^\bullet\Omega_{\bar X}^\bullet(\log\bar X_\bullet\setminus X_\bullet)(\bar X_\bullet)) \ar[u]_-\cong^-d\\
H_{an}^m(\bar X,R\bar\zeta_*\Omega_{\bar X_\bullet}^{\bullet\geq n}(\log \bar X_\bullet\setminus X_\bullet)) \ar[u]_-\cong \ar[r] & H_{an}^m(\bar X, R\bar\zeta_*\Omega_{\bar X_\bullet}^\bullet(\log \bar X_\bullet\setminus X_\bullet)) \ar[u]_-\cong\\}
\end{displaymath}
where $c$ and $d$ are the canonical maps as $a$ and $b$ in the diagram~(\ref{bigabove}) and they are isomorphisms by the proof of Corollary~\ref{cor: eh descent for smooth schemes}. The bottom map is injective by \cite[Scholie 8.1.9~(v)]{Deligne} and its image is, by definition, the $n$-th Hodge filtration of $H_{B}^m(X,\C).$
\end{proof}

\begin{prop}\label{prop: DB an exact sequence}
For any scheme $X\in Sch/\C$ and positive integer $a,$ there is a long exact sequence
$$\longrightarrow H_{(c),\DB}^{m}(X,\Z(n))\buildrel \times a\over\longrightarrow H_{(c),\DB}^m(X,\Z(n))\longrightarrow H_{(c),B}^m(X,\Z/a)\longrightarrow H_{(c),\DB}^{m+1}(X,\Z(n))\longrightarrow.$$
\end{prop}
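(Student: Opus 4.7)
The plan is to apply eh hypercohomology (with or without compact supports) directly to the distinguished triangle~(\ref{triangle DB multiplication}) of Proposition~\ref{prop: triangles}, namely
$$DB(n)_{eh}\buildrel \times a\over\longrightarrow DB(n)_{eh}\longrightarrow B(\Z/a)_{eh}\buildrel[+1]\over\longrightarrow,$$
viewed as a distinguished triangle in $D^+(Sh_{eh}(Sch/\C))$ via the eh sheafification functor. Since $R\Gamma_{eh}(X,-)$ and $R\Gamma_{c,eh}(X,-)$ (the latter defined as in \cite[Section 3]{Geisser eh}) are triangulated functors, each produces a long exact sequence
$$\cdots\longrightarrow H_{(c),eh}^m(X,DB(n)_{eh})\buildrel \times a\over\longrightarrow H_{(c),eh}^m(X,DB(n)_{eh})\longrightarrow H_{(c),eh}^m(X,B(\Z/a)_{eh})\longrightarrow H_{(c),eh}^{m+1}(X,DB(n)_{eh})\longrightarrow\cdots.$$

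The first two terms are by definition $H_{(c),\DB}^m(X,\Z(n))$ (Definition~\ref{defn: cdh definition of DB}), so the only remaining task is to identify $H_{(c),eh}^m(X,B(\Z/a)_{eh})$ with $H_{(c),B}^m(X,\Z/a)$. For the version without compact supports, this is exactly Corollary~\ref{cor: eh descent for arbitrary schemes}~(iii). For the version with compact supports, I would repeat the argument used for $B(n)_{eh}$ in the proof of Proposition~\ref{prop: reasonable}: choose a compactification $X\hookrightarrow\bar X$ with boundary $Z:=\bar X\setminus X$, take an injective resolution $B(\Z/a)_{eh}\longrightarrow\mathcal I^\bullet$ in $Sh_{eh}(Sch/\C)$, and use the isomorphism $\Z^c(X)_{eh}\cong cone(\Z(Z)_{eh}\longrightarrow\Z(\bar X)_{eh})$ of \cite[Corollary 3.9]{Friedlander-Voevodsky} to obtain
$$H_{c,eh}^m(X,B(\Z/a)_{eh})\cong H^m\bigl(cone(\mathcal I^\bullet(\bar X)\longrightarrow\mathcal I^\bullet(Z))[-1]\bigr).$$
Since $\bar X$ and $Z$ are proper, $\mathcal I^\bullet(\bar X)$ and $\mathcal I^\bullet(Z)$ compute $H_B^*(\bar X,\Z/a)$ and $H_B^*(Z,\Z/a)$ respectively by Corollary~\ref{cor: eh descent for arbitrary schemes}~(iii). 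Comparing with the localization sequence for Betti cohomology with compact supports yields the desired identification.

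The substantive part of the argument is entirely contained in Proposition~\ref{prop: triangles} and the earlier identification results; I do not expect any real obstacle here, only the bookkeeping of matching cones and localization sequences in the compactly supported case. The key point that makes everything work uniformly is that both $R\Gamma_{eh}(X,-)$ and $R\Gamma_{c,eh}(X,-)$ are triangulated, so the long exact sequence follows automatically once the coefficient triangle is in place.
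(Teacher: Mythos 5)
Your argument is correct and is essentially identical to the paper's proof: the paper also takes eh hypercohomology (with or without compact supports) of the eh sheafification of the triangle~(\ref{triangle DB multiplication}) and identifies $H_{(c),eh}^m(X,B(\Z/a)_{eh})$ with $H_{(c),B}^m(X,\Z/a)$ via Corollary~\ref{cor: eh descent for arbitrary schemes}~(iii) together with the localization-sequence argument from the proof of Proposition~\ref{prop: reasonable}. You have merely written out the bookkeeping that the paper leaves implicit.
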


\begin{proof}
Take eh hypercohomology (with compact supports) of the distinguished triangle 
$$DB(n)_{eh}\buildrel\times a\over\longrightarrow BD(n)_{eh}\longrightarrow B(\Z/a)_{eh}\longrightarrow$$
obtained by eh sheafification of (\ref{triangle DB multiplication}) and derive the isomorphism $H_{c,eh}^m(X,B(\Z/a)_{eh})\cong H_{c,B}^m(X,\Z/a)$ from the proper case in Corollary~\ref{cor: eh descent for arbitrary schemes}~(iii) by arguing with localization sequences as in the proof of Proposition~\ref{prop: reasonable}. 
\end{proof}


\section{Intermediate Jacobians}\label{section: Intermediate Jacobians with compact supports}

For a smooth proper connected scheme $X$ over $\C,$ the $r$-th Griffiths intermediate Jacobian is defined as the complex torus 
$$J_G^{r}(X):=H^{2r-1}_{B}(X,\C)/(F^rH^{2r-1}_{B}(X,\C)+H^{2r-1}_{B}(X,\Z(r))),$$
where $F^r$ signifies the $r$-th Hodge filtration. It sits in the short exact sequence~\cite[(7.9)]{Esnault-Viehweg}
\begin{equation}\label{ses: Jac DB Hodge}
0\longrightarrow J_G^{r}(X)\longrightarrow H_\DB^{2r}(X,\Z(r))\longrightarrow Hdg^r(X)\longrightarrow0,
\end{equation}
where the group of Hodge $r$-cycles $Hdg^r(X)$ is by definition $H_{B}^{2r}(X,\Z(r))\cap H^r(X,\Omega_X^r)\subset H_{B}^{2r}(X,\C).$

In this section, we show that this short exact sequence generalizes to an arbitrary scheme $X$ if we use Carlson's $n$-th intermediate Jacobian associated with mixed Hodge structures and our Deligne-Beilinson cohomology. 

\begin{defn}[{A special case of \cite{Carlson}}]\label{defn: intermediate Jacobians with compact supports}
Let $X$ be a scheme over $\C$ and $(m,n)\in \Z\times\Z.$ The $(m,n)$-th intermediate Jacobian (with compact supports) of $X$ is defined as 
$J_{(c)}^{m,n}(X):= H^{m-1}_{(c),B}(X,\C)/(F^nH^{m-1}_{(c),B}(X,\C)+H^{m-1}_{(c),B}(X,\Z(n))),$
\end{defn}

Let us suppose $X$ is connected for the moment. While we have by definition $J_c^{2r,r}(X)=J_c^{2r,r}(X)=J_G^r(X)$ for smooth proper $X,$ an $(m,n)$-th intermediate Jacobian (with compact supports) is not, in general, compact as a complex Lie group. However, by \cite[Lemma 6]{Carlson}, $J_c^{m,n}(X)$ (resp., $J^{m,n}(X)$) is still a generalized torus, i.e. a quotient of a complex vector space by a discrete subgroup, if $m\leq 2n$ (resp., $min(2m-1,2\dim X+1)\leq 2n$) because the highest possible weight of $H^{m-1}_{c,B}(X,\C)$ (resp., $H^{m-1}_{B}(X,\C)$) is $m-1$ (resp., $min\{2m-2, 2\dim X\}$) by \cite[Th\'eor\`eme 8.2.4]{Deligne}.

\begin{prop}\label{prop: Jac DB Hodge sequence}
Let $i:H_{(c),B}^m(X,\Z(n))\longrightarrow H_{(c),B}^m(X,\C)$ be the map induced by the inclusion $\Z(n)\hookrightarrow \C.$ Then, for any scheme $X$ over $\C,$ there are short exact sequences
$$0\longrightarrow J_{c}^{m,n}(X) \longrightarrow H_{c,\DB}^{m}(X,\Z(n)) \longrightarrow i^{-1}(F^nH_{c,B}^m(X,\C))\longrightarrow 0,$$
and 
$$0\longrightarrow J^{m,n}(X) \longrightarrow H_{\DB}^{m}(X,\Z(n)) \longrightarrow i^{-1}(F^nH_{B}^m(X,\C))\longrightarrow 0.$$

Also, there are equalities 
\begin{equation}\label{hodge cycles and torsions}
i^{-1}(F^nH_{c,B}^m(X,\C))=H_{c,B}^m(X,\Z(n))_{tor}
\end{equation}
if $m<2n,$ and
\begin{equation}\label{hodge cycles and torsions without supports}
i^{-1}(F^nH_{B}^m(X,\C))=H_{B}^m(X,\Z(n))_{tor}
\end{equation}
if $h:=min(2m,2\dim X)<2n.$
\end{prop}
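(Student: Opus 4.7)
The plan has two parts. For the two short exact sequences, I would read them off directly from the long exact sequences~(\ref{reasonable les}) and~(\ref{reasonable les with compact supports}) of Proposition~\ref{prop: reasonable}. By exactness, the image of $H_{(c),\DB}^m(X,\Z(n))\longrightarrow H_{(c),B}^m(X,\Z(n))$ equals the kernel of the subsequent map $H_{(c),B}^m(X,\Z(n))\longrightarrow H_{(c),B}^m(X,\C)/F^n$, which by construction is exactly $i^{-1}(F^n H_{(c),B}^m(X,\C))$; and the kernel of $H_{(c),\DB}^m(X,\Z(n))\longrightarrow H_{(c),B}^m(X,\Z(n))$ is the cokernel of $H_{(c),B}^{m-1}(X,\Z(n))\longrightarrow H_{(c),B}^{m-1}(X,\C)/F^n$, which is $J_{(c)}^{m,n}(X)$ by Definition~\ref{defn: intermediate Jacobians with compact supports}. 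Splicing these two identifications yields the desired short exact sequence.

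For the equalities~(\ref{hodge cycles and torsions}) and~(\ref{hodge cycles and torsions without supports}), the inclusion $H_{(c),B}^m(X,\Z(n))_{tor}\subseteq i^{-1}(F^n H_{(c),B}^m(X,\C))$ is automatic, since $i$ is $\C$-linear and therefore kills torsion. For the reverse containment, take $\alpha\in H_{(c),B}^m(X,\Z(n))$ with $i(\alpha)\in F^n$, and write $\alpha=(2\pi i)^n\beta$ for $\beta\in H_{(c),B}^m(X,\Z)$. Since $\beta$ is fixed by complex conjugation, $\overline{i(\alpha)}=(-1)^n i(\alpha)$, so $i(\alpha)\in \bar F^n$ as well, and hence $i(\alpha)\in F^n\cap \bar F^n$.

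The heart of the argument is thus to verify that $F^n\cap \bar F^n=0$ in $H_{(c),B}^m(X,\C)$ under the stated weight hypotheses. By \cite[Th\'eor\`eme 8.2.4]{Deligne}, $H_{c,B}^m(X,\Q)$ carries a mixed Hodge structure with weights $\leq m$, while $H_B^m(X,\Q)$ has weights $\leq h=\min\{2m,2\dim X\}$. On a pure Hodge structure of weight $k$ one has $F^n\cap \bar F^n=\bigoplus_{n\leq p\leq k-n}H^{p,k-p}$, which vanishes once $k<2n$; by the strictness of $F^\bullet$ with respect to the weight filtration, this vanishing propagates up the weight filtration by induction on its length. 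Hence $F^n\cap \bar F^n=0$ in $H_{c,B}^m(X,\C)$ when $m<2n$, and in $H_B^m(X,\C)$ when $h<2n$. Either hypothesis forces $i(\alpha)=0$, and therefore $\alpha$ is torsion because $H_{(c),B}^m(X,\Z(n))\otimes \Q$ injects into $H_{(c),B}^m(X,\C)$. The only non-formal ingredient is this weight-filtration induction, which is a standard consequence of Deligne's mixed Hodge theory; the rest is a diagram chase in Proposition~\ref{prop: reasonable}.
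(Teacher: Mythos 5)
Your proposal is correct and follows essentially the same route as the paper: the short exact sequences are extracted from the long exact sequences of Proposition~\ref{prop: reasonable} exactly as in the paper's proof, and the equalities are deduced from Deligne's weight bounds ($\leq m$ for compact supports, $\leq h$ otherwise). The paper compresses the second step into the single remark that the maximal weight is $m$ (resp.\ $h$); your conjugation argument showing $i(\alpha)\in F^n\cap\bar F^n=0$ and the reduction to pure graded pieces is just the standard expansion of that remark.
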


\begin{proof}
By Proposition~\ref{prop: reasonable}, we have exact sequences
\begin{equation}\label{jacobian as ker}
0\longrightarrow J_{(c)}^{m,n}(X)\longrightarrow H_{(c),\DB}^m(X,\Z(n))\longrightarrow \mathrm{ker}\{ H_{(c),B}^{m}(X,\Z(n))\buildrel f\over\longrightarrow H_{(c),B}^{m}(X,\C)/F^n\}\longrightarrow 0,
\end{equation}
but $\ker f$ is equal to $i^{-1}(F^nH_{(c),B}^m(X,\C)).$

The equalities (\ref{hodge cycles and torsions}) and (\ref{hodge cycles and torsions without supports}) hold because the maximal weight of $H_{c,B}^m(X,\C)$ (resp., $H_{B}^m(X,\C)$) is $m$ (resp., $h$).
\end{proof}


\section{Deligne-Beilinson cycle maps}\label{section: Deligne-Beilinson cycle maps}

In this section, we construct a cycle map from Suslin-Friedlander's motivic complex $\Z(n)^{SF}$ to the Deligne-Beilinson complex $DB(n)_{Zar}.$ Taking the eh sheafification, we obtain Deligne-Beilinson cycle maps for Lichtenbaum cohomology with and without compact supports of arbitrary schemes over $\C.$ By Lichtenbaum cohomology (resp., with compact supports) for singular schemes, we mean the eh-hypercohomology (resp., with compact supports) of the Suslin-Friedlander motivic complex. We follow Bloch's method in \cite{Bloch cycle map} but carries out his construction at the level of sheaves; this method has appeared, for example, in \cite{Geisser-Levine}. We work in the derived category of Zariski sheaves on the big Zariski site $Sm/\C_{Zar}.$

Recall that Suslin-Friedlander's motivic complex $\Z(n)^{SF}:= C_\bullet z_{equi}(\A^n,0)[-2n]$ is a complex of Zariski (even \'etale) sheaves quasi-isomorphic to Voevodsky's motivic complex on the big Zariski site over any perfect field (\cite[Theorem 16.7]{MVW}). For us, it is important that $\Z(n)^{SF}$ can be regarded as a subcomplex of Bloch's cycle complex. More precisely, the inclusion of cycles is compatible with face maps; thus it induces a chain map
$$i:\Z(n)^{SF}[2n](X)\hookrightarrow z^{n}(X\times \A^n,\bullet)$$
for any scheme $X$ (\cite[Lemma 19.4]{MVW}). The advantage of the Suslin-Friedlander's complex over Bloch's is that the former is defined on a big site whereas the latter is not (it does not have enough contravariant functoriality). Hence, the eh-sheafification process applies only to the former.

Now, suppose $X$ is a smooth scheme over $\C$ and $W$ is a codimension $n$ cycle on $X.$ Choose a smooth compactification $\bar X$ and the closure $\overline{\mathrm{supp}W}$ of $\mathrm{supp}W$ in $\bar X.$ Let $c_\mathcal{D}(\overline{\mathrm{supp}W})\in H_{\DB,\overline{\mathrm{supp}W}}^{2n}(\bar X, \Z(n))$ be the fundamental class defined in \cite[7.1]{Esnault-Viehweg}. The class $c^\DB(W)\in H_{\DB,\mathrm{supp}W}^{2n}(X,\Z(n))$ is defined as the image of $c_\mathcal{D}(\bar W)\in H_{\DB,\overline{\mathrm{supp}W}}^{2n}(\bar X,\Z(n))$
under the restriction $H_{\DB,\overline{\mathrm{supp}W}}^{2n}(\bar X,\Z(n))\longrightarrow H_{\DB,\mathrm{supp}W}^{2n}(X,\Z(n))$ (see [loc. cit., Remark 7.2]).

\begin{lem}\label{lem: fundamental class}
The class $c^\DB(W)\in H_{\DB,\mathrm{supp}W}^{2n}(X,\Z(n))$ does not depend on the compactification $\bar X.$
\end{lem}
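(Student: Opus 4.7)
The plan is the standard ``dominate by a third compactification'' approach. Given two good compactifications $\bar X_1, \bar X_2$ of $X,$ I would first invoke Hironaka's theorem (applied to the closure of the graph of the birational identification $\bar X_1 \dashrightarrow \bar X_2$) to produce a third good compactification $\bar X_3$ with proper morphisms $\pi_i : \bar X_3 \to \bar X_i$ both restricting to the identity on $X.$ By transitivity, it then suffices to compare the class built from $\bar X_i$ with the one built from $\bar X_3$ for each $i.$

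Fix $i$ and write $\pi=\pi_i.$ Let $\bar W_k$ denote the closure of $\mathrm{supp}\,W$ in $\bar X_k$ and $j_k : X \hookrightarrow \bar X_k$ the open immersion, so that $\pi\circ j_3=j_i.$ By functoriality of Deligne--Beilinson cohomology with supports, $\pi^*$ sends $c_\mathcal{D}(\bar W_i)$ into $H^{2n}_{\DB, \pi^{-1}(\bar W_i)}(\bar X_3, \Z(n)),$ and $\pi\circ j_3 = j_i$ gives $j_i^* c_\mathcal{D}(\bar W_i) = j_3^* \pi^* c_\mathcal{D}(\bar W_i).$ Since $\pi$ is an isomorphism over $X,$ the strict transform $\bar W_3$ is contained in $\pi^{-1}(\bar W_i)$ with the extra components lying on the exceptional locus $\pi^{-1}(\bar X_i \setminus X) \subseteq \bar X_3 \setminus X.$ After enlarging the support of $c_\mathcal{D}(\bar W_3)$ via the canonical map $H^{2n}_{\DB, \bar W_3}(\bar X_3,\Z(n)) \to H^{2n}_{\DB, \pi^{-1}(\bar W_i)}(\bar X_3, \Z(n)),$ both classes live in the same group, and the claim reduces to showing that their difference lies in $\ker j_3^*.$ By the localization long exact sequence for the open immersion $X \hookrightarrow \bar X_3,$ this kernel is precisely the image of $H^{2n}_{\DB, \pi^{-1}(\bar W_i) \cap (\bar X_3 \setminus X)}(\bar X_3, \Z(n)).$

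The remaining geometric content is to show that $\pi^* c_\mathcal{D}(\bar W_i) - c_\mathcal{D}(\bar W_3)$ is indeed supported on $\bar X_3 \setminus X.$ My plan here is to invoke the weak factorization theorem of Abramovich--Karu--Matsuki--W\l odarczyk, which lets us replace $\pi$ by a sequence of smooth blow-ups and blow-downs whose centers lie over $\bar X_i \setminus X.$ It then suffices to treat a single blow-up along a smooth center $Z$ disjoint from $X,$ for which the blow-up (excess-intersection) formula for Esnault--Viehweg's fundamental class exhibits $\pi^* c_\mathcal{D}(\bar W_i) - c_\mathcal{D}(\bar W_3)$ as a Gysin contribution from the exceptional divisor $\pi^{-1}(Z) \subseteq \bar X_3 \setminus X,$ as required.

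The main obstacle is this last comparison: one needs to unwind Esnault--Viehweg's explicit construction and verify the blow-up formula at the level of the Deligne complex with supports, keeping careful track of how the fundamental class of $\bar W_i$ decomposes on the blow-up. A potentially cleaner alternative, which I would pursue in parallel, is to show that $j_i^* c_\mathcal{D}(\bar W_i)$ already agrees with an intrinsically defined Deligne--Beilinson fundamental class of the cycle $W$ on the smooth scheme $X$ itself (not using a compactification); this would make the independence of $\bar X_i$ tautological, at the cost of first setting up such an intrinsic fundamental class in the present framework.
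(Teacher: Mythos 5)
Your overall skeleton --- dominate $\bar X_1$ and $\bar X_2$ by a third good compactification and reduce to comparing along a single morphism $\pi$ of compactifications --- is exactly the paper's first step. But from that point on the paper's proof is essentially a one-line citation: Esnault--Viehweg, Proposition 7.5, already establishes the contravariant functoriality of the fundamental class, i.e.\ that $\pi^*$ carries $c_{\mathcal D}(\overline{\mathrm{supp}W}^{\,\bar X_i})$ to $c_{\mathcal D}(\overline{\mathrm{supp}W}^{\,\bar X_3})$ (after the evident enlargement of supports), and the lemma then follows from the commutativity of $\pi^*$ with restriction to $X$. This is precisely the step you flag as ``the main obstacle'' and propose to prove from scratch via weak factorization plus a blow-up/excess-intersection formula for the Deligne--Beilinson class with supports. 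That constitutes a genuine gap: the comparison $\pi^*c_{\mathcal D}(\bar W_i)=c_{\mathcal D}(\bar W_3)$ modulo boundary-supported classes is the entire mathematical content of the lemma, and your proposal does not actually establish it --- it defers it to a formula that is not in Esnault--Viehweg and would itself require a careful proof (including verifying that the intermediate models in the weak factorization remain compactifications of $X$ and that the class of the strict transform behaves as claimed under each elementary blow-up, where the center may well meet or even lie inside the closure of $\mathrm{supp}\,W$).

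Two smaller remarks. First, your localization-sequence bookkeeping (identifying $\ker j_3^*$ with classes supported on $\pi^{-1}(\bar W_i)\cap(\bar X_3\setminus X)$) is correct but becomes unnecessary once the Esnault--Viehweg functoriality is invoked, since the two classes then already agree in $H^{2n}_{\DB,\pi^{-1}(\bar W_i)}(\bar X_3,\Z(n))$ before restricting. Second, your ``cleaner alternative'' of an intrinsic class on $X$ itself is in spirit what Esnault--Viehweg's Remark 7.2 provides, but as you note it requires building that intrinsic class first, which is more work than citing their Proposition 7.5 directly. The fix is simply to replace your heavy machinery by that citation.
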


\begin{proof}
Suppose $\bar X'$ is another smooth compactification of $X.$ As usual, we may assume that there is a morphism of compactifications $f:\bar X\longrightarrow \bar X'.$ Let $\overline{\mathrm{supp}W'}$ be the closure of $\mathrm{supp}W$ in $\bar X'.$ Then, we have the commutative diagram
\begin{displaymath}
\xymatrix{ H_{\DB,\overline{\mathrm{supp}W}}^{2n}(\bar X, \Z(n)) \ar[dr]^-{\text{res.}} \\
&    H_{\DB,\mathrm{supp}W}^{2n}(X,\Z(n)) \\ 
H_{\DB,\overline{\mathrm{supp}W'}}^{2n}(\bar X', \Z(n)) \ar[uu]^-{f^*} \ar[ur]_-{\text{res.}}}
\end{displaymath}
By \cite[Proposition 7.5]{Esnault-Viehweg}, $f^*$ sends $c_\mathcal{D}(\overline{\mathrm{supp}W'})$ to $c_\mathcal{D}(\overline{\mathrm{supp}W}).$ Therefore, by the commutativity of the diagram, the class $c^\DB(W)$ is independent of the choice of a smooth compactification.
\end{proof}

\begin{lem}\label{lem: functoriality}
Let $f:X\longrightarrow Y$ is a morphism of smooth schemes, and let $W$ be a codimension $n$ cycle on $Y$ such that the pullback $f^*W$ is also a codimension $n$ cycle on $X.$ Then, we have $c_\DB(\mathrm{supp}f^*W)=f^*(c_\DB(\mathrm{supp}W))$ in $H_{\DB,\mathrm{supp}f^*W}^{2n}(X,\Z(n)).$ 
\end{lem}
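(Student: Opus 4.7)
The plan is to reduce the statement to the functoriality of the fundamental class in Deligne-Beilinson cohomology (\cite[Proposition 7.5]{Esnault-Viehweg}), which is already invoked in the proof of Lemma~\ref{lem: fundamental class}.

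First, using resolution of singularities and the graph construction (take the closure of the graph of $f$ in $\bar X_0 \times \bar Y$ for arbitrary smooth compactifications $\bar X_0$ and $\bar Y$, then resolve), choose smooth compactifications $\bar X$ of $X$ and $\bar Y$ of $Y$ together with a morphism $\bar f \colon \bar X \to \bar Y$ extending $f$. By Lemma~\ref{lem: fundamental class} we may use these particular compactifications when computing the classes $c^\DB(W)$ and $c^\DB(f^*W)$. Write $\overline{\mathrm{supp}\,W}$ for the closure in $\bar Y$ and $\overline{\mathrm{supp}\,f^*W}$ for the closure in $\bar X$, and consider the commutative square of pullback/restriction maps in local Deligne-Beilinson cohomology
\begin{displaymath}
\xymatrix{
H^{2n}_{\DB,\overline{\mathrm{supp}\,W}}(\bar Y,\Z(n)) \ar[r]^-{\bar f^*} \ar[d]_{\text{res.}} & H^{2n}_{\DB,\bar f^{-1}(\overline{\mathrm{supp}\,W})}(\bar X,\Z(n)) \ar[d]^-{\text{res.}} \\
H^{2n}_{\DB,\mathrm{supp}\,W}(Y,\Z(n)) \ar[r]_-{f^*} & H^{2n}_{\DB,\mathrm{supp}\,f^*W}(X,\Z(n)),
}
\end{displaymath}
whose commutativity is a formal consequence of the naturality of the excision triangle that defines cohomology with supports.

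Next, apply \cite[Proposition 7.5]{Esnault-Viehweg}: since $\bar f^*\overline{\mathrm{supp}\,W}$ is, by our codimension hypothesis, a well-defined codimension $n$ cycle on $\bar X$ (the good intersection property is preserved on some open containing the generic point of each component of $f^*W$, and the remaining contribution sits in $\bar X\setminus X$), we have $\bar f^*\bigl(c_{\mathcal D}(\overline{\mathrm{supp}\,W})\bigr)=c_{\mathcal D}\bigl(\bar f^*\overline{\mathrm{supp}\,W}\bigr)$ in the upper-right group. Chasing around the square, the left-down-right route sends the top-left class to $f^*(c^\DB(W))$ by definition of $c^\DB(W)$, while the top-right-down route sends it to the restriction to $X$ of $c_{\mathcal D}(\bar f^*\overline{\mathrm{supp}\,W})$.

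It remains to identify this restriction with $c^\DB(f^*W)$. The cycles $\bar f^*\overline{\mathrm{supp}\,W}$ and $\overline{\mathrm{supp}\,f^*W}$ on $\bar X$ need not agree (the former may contain extra components supported entirely in $\bar X\setminus X$), but their restrictions to the open subscheme $X$ coincide and equal $f^*W$ as cycles. The key (and only nontrivial) point of the proof is therefore the following: if $V$ and $V'$ are codimension $n$ cycles on $\bar X$ with $V|_X=V'|_X$, then the restrictions of $c_{\mathcal D}(V)$ and $c_{\mathcal D}(V')$ to $H^{2n}_{\DB,V|_X}(X,\Z(n))$ coincide. This follows from excision: the difference $V-V'$ is supported in $\bar X\setminus X$, so $c_{\mathcal D}(V)-c_{\mathcal D}(V')$ lifts to the local cohomology along $\bar X\setminus X$, which dies under restriction to $X$. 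Combining these identifications yields the desired equality $c^\DB(f^*W)=f^*(c^\DB(W))$; the verification of the excision argument above is the only step requiring care, everything else being the formalism of the square.
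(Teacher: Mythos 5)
Your proof follows essentially the same route as the paper's: choose compatible smooth compactifications $\bar f\colon\bar X\to\bar Y$, invoke \cite[Proposition 7.5]{Esnault-Viehweg} for the functoriality of $c_{\mathcal D}$ on the compactifications, and descend by restriction using the commuting square of pullbacks and restrictions. The one step you isolate and prove carefully --- reconciling $\bar f^*\overline{\mathrm{supp}\,W}$ with $\overline{\mathrm{supp}\,f^*W}$, which may differ by components lying in $\bar X\setminus X$, via excision --- is a point the paper's proof passes over silently, and your excision argument for it is correct.
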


\begin{proof}
Choose smooth compactifications $X\hookrightarrow \bar X$ and $Y\hookrightarrow \bar Y$ with a morphism $\bar f$ that makes the diagram
\begin{displaymath}
\xymatrix{ X \ar@{^{(}->}[r] \ar[d]_-f & \bar X \ar[d]^-{\bar f}\\
Y \ar@{^{(}->}[r] & \bar Y}
\end{displaymath}
commutative. By Lemma~\ref{lem: fundamental class}, we may calculate the classes $c_\DB(\mathrm{supp}f^*W)$ and $c_\DB(\mathrm{supp}W)$ with $\bar X$ and $\bar Y.$ Therefore, since $c_\mathcal{D}(\overline{\mathrm{supp}f^*W})=c_\mathcal{D}(\mathrm{supp}\bar f^*\bar W) =\bar f^*(c_\mathcal{D}(\overline{\mathrm{supp}W}))$ in $H_{\DB,\overline{\mathrm{supp}f^*W}}^{2n}(\bar X,\Z(n))$ by \cite[Proposition 7.5]{Esnault-Viehweg}, the lemma follows.
\end{proof}

We have the following diagram, in which: (a) $tr_{\leq}$ means a good truncation of a complex, (b) For a sheaf $\mathcal F$ on a scheme $S$ with a closed subscheme $T\hookrightarrow S,$ we write $\Gamma_T(S,\mathcal F):=cone(\mathcal F(S)\longrightarrow \mathcal F(S\setminus T))[-1],$ and (c) $DB(n)_{Zar}\longrightarrow \mathcal K^\bullet$ is an injective resolution in $Sh_{Zar}(Sm/\C).$
\begin{equation}\label{diagram: maps of sections}
\xymatrix{ C_r z_{equi}(\A^n,0)(X)   \ar[r]^-{c^\DB} & \colim_{W\in C_r z_{equi}(\A^n,0)(X)}H_{\DB,\mathrm{supp}W}^{2n}(X\times\A^n\times\Delta^r,\Z(n)) \ar[d]_-\cong^-{\text{Theorem~\ref{thm: etale descent}(i)}} \\
& \colim_{W}H_{Zar,\mathrm{supp}W}^{2n}(X\times\A^n\times\Delta^r,DB(n)_{Zar}) \\
& \colim_W tr_{\leq 2n}\Gamma_{\mathrm{supp}W}(X\times\A^n\times\Delta^r,\mathcal K^\bullet)[2n] \ar[d]^b \ar[u]_a^-{\text{qis}} \\
& \Gamma(X\times\A^n\times\Delta^r,\mathcal K^\bullet)[2n],}
\end{equation}
where the map $a$ is a quasi-isomorphism by the weak purity of Deligne-Beilinson cohomology, and $b$ is the map that forgets the truncation and supports.

Since we are only dealing with equidimensional cycles, all maps in the diagram~(\ref{diagram: maps of sections}) are compatible with pullbacks along face maps and contravariant in $X$ with respect to all morphisms by Lemma~\ref{lem: functoriality}. Therefore, we obtain the maps of complexes of presheaves on $Sm/\C:$
\begin{equation}\label{diagram: SF to DB}
\xymatrix{\Z(n)^{SF}[2n]\buildrel\text{def.}\over=C_\bullet z_{equi}(\A^n,0)(-) \ar[r]^-{c^\DB} & \colim_{W\in C_\bullet z_{equi}(\A^n,0)(-)}H_{Zar,\mathrm{supp}W}^{2n}(-\times\A^n\times\Delta^\bullet,DB(n)_{Zar}) \\
& Tot^\oplus \colim_W tr_{\leq 2n}\Gamma_{\mathrm{supp}W}(-\times\A^n\times\Delta^\bullet,\mathcal K^\bullet)[2n] \ar[d]^b \ar[u]_a^-{\text{qis}} \\
& Tot^\oplus \Gamma(-\times\A^n\times\Delta^\bullet,\mathcal K^\bullet)[2n].}
\end{equation}
Furthermore, the homotopy invariance of the Deligne-Beilinson cohomology implies that the following two maps are quasi-isomorphisms:
\begin{equation}\label{diagram: HI of DB}
Tot C^{r,s}:=Tot_{r,s}^\oplus \Gamma(U\times\A^n\times\Delta^r,\mathcal K^s)   \buildrel{\text{qis}}\over\longleftarrow  \Gamma(U\times\A^n,\mathcal K^\bullet) \buildrel{\text{qis}}\over\longleftarrow  \Gamma(U,\mathcal K^\bullet);
\end{equation}
here the first arrow is the inclusion of $\Gamma(U\times\A^n,\mathcal K^s)$ to the $(0,s)$-th direct summand $\Gamma(U\times\A^n\times\Delta^0,\mathcal K^s)$ of the total complex and the second is induced by the projection $U\times \A^n\longrightarrow U.$ Since these two quasi-isomorphisms are contravariant in $U,$ they give maps of complexes of Zariski sheaves on $Sm/\C.$ 

Combining the diagrams~(\ref{diagram: SF to DB}) and~(\ref{diagram: HI of DB}), we obtain the maps of presheaves on $Sm/\C$
\begin{equation}
\xymatrix{\Z(n)^{SF} \ar[r]^-{c^\DB} &   \colim_{W\in  C_\bullet z_{equi}(\A^n,0)(X)}H_{Zar,\mathrm{supp}W}^{2n}(-\times\A^n\times\Delta^\bullet,DB(n)_{Zar})[-2n]\\
& Tot^\oplus \colim_W tr_{\leq 2n}\Gamma_{\mathrm{supp}W}(-\times\A^n\times\Delta^\bullet,\mathcal K^\bullet) \ar[d]^-b \ar[u]_-a^-{\text{qis}} \\
& Tot^\oplus \Gamma(-\times\A^n\times\Delta^\bullet,\mathcal K^\bullet) \\
& \mathcal K^\bullet \ar[u]^{\text{qis}} & DB(n)_{Zar} \ar[l]_-{\text{qis as Zariski sheaves}}}
\end{equation}
Taking the Zariski sheafification, we obtain the corresponding diagram of complexes of Zariski sheaves on $Sm/\C$. Let us record this as a theorem.

\begin{thm}\label{thm: DB cycle map}
There is a map $$c^\DB(n):\Z(n)^{SF}\longrightarrow DB(n)_{Zar}$$ in $D(Sh_{Zar}(Sm/\C))$ such that the induced map of Zariski hypercohomology of any smooth scheme $X$
$$H_{Zar}^m(X,\Z(n)^{SF})\longrightarrow H_{Zar}^m(X,DB(n)_{Zar})\cong H_\DB^m(X,\Z(n))$$
agrees with the Deligne-Beilinson cycle map in \cite{Bloch cycle map} via the canonical isomorphism $H_{Zar}^m(X,\Z(n)^{SF})\cong CH^n(X,2n-m)$ constructed in \cite[Chapter 19]{MVW}.
\end{thm}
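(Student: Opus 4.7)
My plan is to obtain the cycle map by sheafifying the presheaf-level diagram assembled just above the theorem statement. Every arrow labeled ``qis'' in that diagram is a pointwise quasi-isomorphism: the map $a$ by the weak purity of Deligne-Beilinson cohomology along codimension $n$ supports in a smooth ambient scheme, and the two arrows of (\ref{diagram: HI of DB}) by homotopy invariance. Since Zariski sheafification is exact, it preserves these quasi-isomorphisms, so after sheafifying I obtain a zigzag of morphisms of complexes of Zariski sheaves on $Sm/\C$ from $\Z(n)^{SF}$ to $DB(n)_{Zar}$ whose backward arrows are all quasi-isomorphisms. This yields the desired morphism $c^\DB(n)\colon\Z(n)^{SF}\to DB(n)_{Zar}$ in $D(Sh_{Zar}(Sm/\C))$. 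The functoriality of Deligne-Beilinson fundamental classes (Lemma~\ref{lem: functoriality}) and their compatibility with face and degeneracy maps --- verified in the paragraphs preceding the diagram --- is what makes every arrow in the diagram into an honest morphism of presheaf complexes in the first place.

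Next, for a smooth scheme $X$, I identify the induced map on Zariski hypercohomology. By \cite[Chapter 19]{MVW} one has $H^m_{Zar}(X,\Z(n)^{SF})\cong CH^n(X,2n-m)$, and by Theorem~\ref{thm: etale descent}(i) one has $H^m_{Zar}(X,DB(n)_{Zar})\cong H^m_\DB(X,\Z(n))$. Unwinding the zigzag on global sections, the induced homomorphism sends a class represented by an equidimensional cycle $W\in C_\bullet z_{equi}(\A^n,0)(X)$ --- viewed as a codimension $n$ cycle on $X\times\A^n\times\Delta^\bullet$ --- to the image of its Deligne-Beilinson fundamental class with support $c^\DB(W)$ after applying weak purity, forgetting supports, and invoking the homotopy invariance quasi-isomorphisms of (\ref{diagram: HI of DB}) along the projection $X\times\A^n\times\Delta^\bullet\to X$.

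Finally, I compare with Bloch's cycle map from \cite{Bloch cycle map}. Bloch's construction applies exactly the same recipe (fundamental class with supports, weak purity, forget supports, homotopy invariance) but directly to a cycle $W\in z^n(X,r)$ on $X\times\Delta^r$. The link with ours is furnished by the zigzag of quasi-isomorphisms of cycle complexes
$$z^n(X,\bullet)\xrightarrow{p^*} z^n(X\times\A^n,\bullet)\hookleftarrow \Z(n)^{SF}[2n](X),$$
where $p\colon X\times\A^n\to X$ is the projection; the first map is a quasi-isomorphism by homotopy invariance of higher Chow groups and the second by the Friedlander-Suslin comparison in \cite[Chapter 19]{MVW}. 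Lemma~\ref{lem: functoriality} applied to $p$ gives $p^*c^\DB(W)=c^\DB(p^*W)$, so tracing a class $[W]\in CH^n(X,2n-m)$ along both constructions yields the same element of $H^m_\DB(X,\Z(n))$ once one invokes the homotopy invariance of Deligne-Beilinson cohomology along $p$.

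The main obstacle is the last compatibility: one must check that the homotopy-invariance identification implicit in (\ref{diagram: HI of DB}) along $X\times\A^n\times\Delta^\bullet\to X$ coincides, after pullback of a Bloch cycle along $p$, with Bloch's own homotopy-invariance identification along $X\times\Delta^\bullet\to X$. Since both are induced by the same projection and are compatible with the functoriality of fundamental classes, this reduces to a routine diagram chase, but spelling it out is where the real content beyond the formal sheafification step lies.
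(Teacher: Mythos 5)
Your proposal is correct and follows essentially the same route as the paper: sheafify the presheaf-level zigzag built from fundamental classes with supports, weak purity, and homotopy invariance, then compare with Bloch's cycle map via the intermediate cycle complex $z^n(X\times\A^n,\bullet)$. The paper condenses the final comparison into citations of \cite[Theorem 19.8, Proposition 19.12]{MVW}, which supply precisely the zigzag $z^n(X,\bullet)\xrightarrow{p^*} z^n(X\times\A^n,\bullet)\hookleftarrow \Z(n)^{SF}[2n](X)$ and its compatibilities that you spell out by hand.
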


\begin{proof}
This follows from the construction of $c^\DB(n)$ and \cite[Theorem 19.8, Proposition 19.12]{MVW}. 
\end{proof}

By eh sheafification, $c^\DB(n)$ induces the map
$$c_{eh}^\DB(n):\Z(n)_{eh}^{SF}\longrightarrow DB(n)_{eh}^\DB$$
in $D(Sh_{eh}(Sch/\C)).$

\begin{defn}\label{defn: DB cycle map with compact supports}
Let $X$ be an arbitrary scheme over $\C$. The {\bf Deligne-Beilinson cycle map for Lichtenbaum cohomology (resp., with compact supports)}
$$cl_{L}^\DB: H_{L}^m(X,\Z(n))\longrightarrow H_{\DB}^m(X,\Z(n))$$
\begin{center}
(resp., $cl_{c,L}^\DB: H_{c,L}^m(X,\Z(n))\longrightarrow H_{c,\DB}^m(X,\Z(n))$)
\end{center}
is the map induced by taking the eh hypercohomology (resp., with compact supports) of $c_{eh}^\DB(n).$
\end{defn}

The same construction applied to the Betti cycle class---note that Betti cohomology also has weak purity and $\A^1$-homotopy invariance---gives the morphism
$$c^B(n): \Z(n)^{SF}\longrightarrow B(n)_{Zar}$$
in $D(Sh_{Zar}(Sm/\C)).$ Taking the eh hypercohomology, we define Betti cycle maps for Lichtenbaum cohomology (with compact supports)
$$cl_{(c),L}^B: H_{(c),L}^m(X,\Z(n))\longrightarrow H_{(c),B}^m(X,\Z(n))$$
for an arbitrary $X\in Sch/\C.$

By construction and the definition of the Deligne-Beilinson cycle map (it is defined by lifting Betti fundamental classes; see \cite[(7.1)]{Esnault-Viehweg}), there is a commutative diagram
\begin{equation}\label{DB and B cycle compatible}
\xymatrix{ \Z(n)^{SF} \ar[r]^{c^\DB}(n) \ar@{=}[d] & BD(n)_{Zar} \ar[d] \\
\Z(n)^{SF} \ar[r]^{c^B}(n)  & B(n)_{Zar}}
\end{equation}
where the right vertical arrow is the map induced by the projection $\{DB(n)_{\bar T,T}\}\longrightarrow \{B(n)_{\bar T,T}\}$ in  $Sh_{an}(\Pi).$

With Proposition~\ref{prop: Jac DB Hodge sequence} and the eh sheafification of the diagram~(\ref{DB and B cycle compatible}), the Deligne-Beilinson cycle maps restrict to the homological part $H_{(c),L,hom}^m(X,\Z(m)):=\ker\{H_{(c),L}^m(X,\Z(m))\buildrel cl_{(c),L}^B\over\longrightarrow H_{(c),B}^m(X,\Z(n))\}$ as indicated in the following diagram:
\begin{equation}\label{for corollary}
\xymatrix{ 0\ar[r] & H_{(c),L,hom}^m(X,\Z(n)) \ar[r] \ar[d]_-{AJ_{(c),L}^{m,n}} & H_{(c),L}^m(X,\Z(n)) \ar[d]_-{cl_{(c),L}^\DB} \ar[r]^-{cl_{(c),L}^B} & im(cl_{(c),L}^B) \ar[r] \ar@{_{(}->}[d]_-{\text{inc.}} & 0 \\
0\ar[r] & J_{(c)}^{m,n}(X) \ar[r] &  H_{(c),\DB}^m(X,\Z(n)) \ar[r] & i^{-1}(F^nH_{(c),B}^m(X,\C)) \ar[r] &0.}
\end{equation}
Here, $AJ_{(c),L}^{m,n}$ is by definition the restriction of $cl_{(c),L}^\DB$ and $i:H_{(c),B}^m(X,\Z(n))\longrightarrow H_{(c),B}^m(X,\C)$ is the map induced by the inclusion $\Z(n)\hookrightarrow \C.$ Let us point out here that $H_{(c),L,hom}^m(X,\Z(m))$ coincides with the subgroup of divisible elements of $H_{(c),L}^m(X,\Z(m))$ as explained in Remark~\ref{rem: max div} below.

\begin{thm}[{Abel-Jacobi and Lefschetz theorems; cf.~\cite[Theorem 7.8]{Arapura}}]\label{thm: AJ and Lefschetz}
For an arbitrary $X\in Sch/\C,$ 
\begin{enumerate}
\item The change of topologies induces an isomorphism $H_{c,cdh}^2(X,\Z(1)_{cdh}^{SF})\longrightarrow H_{c,L}^2(X,\Z(1)).$
\item The Abel-Jacobi map with compact supports
$$AJ_c^{2,1}:  H_{c,L,hom}^2(X,\Z(1))\longrightarrow J_c^{2,1}(X) $$
is an isomorphism.
\item The Betti cycle map with compact supports
$$cl_c^B: H_{c,L}^2(X,\Z(1))\longrightarrow H_{c,B}^2(X,\Z(1))$$
is surjective onto $i^{-1}(F^1H_{c,B}^2(X,\C)).$
\end{enumerate}
\end{thm}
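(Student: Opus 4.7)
The strategy is to first establish (i) as a Hilbert--90-style comparison for $\Gm$ between the cdh and eh topologies, then deduce (ii) and (iii) from the diagram~(\ref{for corollary}) after proving the core claim that $cl_{c,L}^{\DB}\colon H_{c,L}^2(X,\Z(1))\to H_{c,\DB}^2(X,\Z(1))$ is itself an isomorphism.

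For (i), I would use the well-known quasi-isomorphism $\Z(1)^{SF}\simeq \Gm[-1]$ on $Sm/\C$ to identify the map under consideration, after a shift, with $H_{c,cdh}^1(X,\Gm_{cdh})\to H_{c,L}^1(X,\Gm_{eh})$. Hilbert's Theorem 90 gives $\pi_*\Gm_{et}=\Gm_{Nis}$ and $R^1\pi_*\Gm_{et}=0$ on smooth schemes, so the map $\Z(1)^{SF}_{cdh}\to R\pi_*\Z(1)^{SF}_{eh}$ is a quasi-isomorphism in degrees $\leq 2$ on $Sm/\C$. Choose a compactification $X\hookrightarrow\bar X$ with $Z:=\bar X\setminus X$ together with compatible smooth proper cdh hypercovers of $\bar X$ and $Z$ as in the proof of Proposition~\ref{prop: reasonable}; the eh localization triangle with compact supports, combined with the hypercover spectral sequence, then reduces the degree-2 comparison for $X$ to the smooth proper case already handled.

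For (iii), the short exact sequence $0\to J_c^{2,1}(X)\to H_{c,\DB}^2(X,\Z(1))\to i^{-1}(F^1H_{c,B}^2(X,\C))\to 0$ of Proposition~\ref{prop: Jac DB Hodge sequence} reduces matters to the surjectivity of $cl_{c,L}^{\DB}$, since by the compatibility~(\ref{DB and B cycle compatible}) the Betti cycle map $cl_{c,L}^B$ factors through the canonical surjection $H_{c,\DB}^2(X,\Z(1))\twoheadrightarrow i^{-1}(F^1H_{c,B}^2(X,\C))$. Granted (iii), statement (ii) follows by applying the five--lemma to the map of short exact sequences in~(\ref{for corollary}), whose right vertical arrow then becomes an isomorphism; consequently $AJ_{c,L}^{2,1}$ is an isomorphism if and only if $cl_{c,L}^{\DB}$ is.

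To establish the core claim, I would proceed by cdh/eh descent. For smooth projective $X$, the map $Pic(X)=H_L^2(X,\Z(1))\to H_{\DB}^2(X,\Z(1))$ is an isomorphism by the classical Abel--Jacobi theorem giving $Pic^0(X)\cong J^1(X)$, Lefschetz's $(1,1)$-theorem giving $NS(X)\twoheadrightarrow Hdg^1(X)$, and the five--lemma applied to the Griffiths sequence~(\ref{ses: Jac DB Hodge}). For a general $X$, pick a compactification $X\hookrightarrow\bar X$ with $Z:=\bar X\setminus X$; the eh localization long exact sequences for $H^\bullet_{c,L}$ and $H^\bullet_{c,\DB}$ give a diagram amenable to comparison, with $Z$ (of smaller dimension) handled by induction on $\dim X$ and $\bar X$ (proper) handled by smooth proper cdh hypercovers. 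The main obstacle is that even on a smooth projective $X$ with nontrivial $H_B^1(X,\Z(1))_{tor}$ the cycle map fails to be an isomorphism in degree~$1$ (its cokernel is precisely this torsion), so the descent argument cannot be purely formal; controlling this requires combining the five--lemma with the torsion part of the main torsion theorem cited in the introduction (asserting that $cl_{c,L}^{\DB}$ is surjective on torsion with torsion--free cokernel), together with the characterization of $H_{c,L,hom}^2$ as the maximal divisible subgroup of $H_{c,L}^2$ (Remark~\ref{rem: max div}), so that the lower-degree torsion discrepancies are absorbed in the desired degree-2 isomorphism.
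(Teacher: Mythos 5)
Your overall architecture coincides with the paper's: part (i) via $\Z(1)^{SF}\simeq\Gm[-1]$, Hilbert's Theorem 90 and hypercover/localization arguments; then (ii) and (iii) reduced through the diagram~(\ref{for corollary}) to the single claim that $cl_{c,L}^{\DB}$ is an isomorphism in degree $2$, which is attacked by compactifying, using the localization sequence, and descending to smooth proper hypercover components where the classical Abel--Jacobi and Lefschetz $(1,1)$ theorems apply. All of that is fine.

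The problem is your treatment of the degree-$1$ term, which is exactly the input your descent argument needs (it appears both in the $q=1$ row of the hypercover spectral sequence and in the five-lemma applied to the localization sequence). You assert that for smooth projective $Y$ the cycle map fails to be an isomorphism in degree $1$ whenever $H_B^1(Y,\Z(1))_{tor}\neq 0$, and then propose to ``absorb'' this discrepancy using Theorem~\ref{thm: torsion main} and Remark~\ref{rem: max div}. But the premise is false: by universal coefficients $H^1(Y,\Z)\cong\Hom(H_1(Y,\Z),\Z)$ is torsion-free for \emph{any} space, so $i^{-1}(F^1H_B^1(Y,\C))=H_B^1(Y,\Z(1))_{tor}=0$ and Proposition~\ref{prop: Jac DB Hodge sequence} gives $H_\DB^1(Y,\Z(1))\cong J^{1,1}(Y)\cong\C/2\pi i\Z$ for $Y$ smooth proper connected; the degree-$1$ cycle map is then the isomorphism $CH^1(Y,1)\cong\C^*\buildrel\log\over\longrightarrow\C/2\pi i\Z$ (this is how the paper handles the $q=1$ row, reducing to $\Spec\C$ via the structure morphism and \cite[Section 5.7]{KLMS}). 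Because you misdiagnose this step, your proposal never actually verifies the degree-$1$ isomorphism, and the substitute you offer is not a proof: surjectivity on torsion and torsion-freeness of the cokernel of $cl^{\DB}$, or the identification of $H_{c,L,hom}^2$ with the divisible part, give no mechanism for cancelling a hypothetical nontrivial cokernel in the $E_1$-page of a first-quadrant spectral sequence or in the outer terms of a five-lemma. If the degree-$1$ discrepancy you posit were real, your argument would genuinely break; since it is not, the fix is simply to prove the degree-$1$ isomorphism directly as above, after which the formal descent you describe goes through.
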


\begin{proof}
For (i), since $\Z(1)^{SF}$ is quasi-isomorphic to $\mathbb G_m[-1]$ on $Sm/\C_{Zar},$ it suffices to show that the canonical map $H_{c,cdh}^1(X,\mathbb G_{m, cdh})\longrightarrow H_{c,eh}^1(X,\mathbb G_{m, eh})$ is an isomorphism. Since both cdh and eh cohomologies with compact supports have localization sequences, it suffices to prove that the change of topologies induces isomorphisms $H_{cdh}^i(S,\mathbb G_{m,cdh})\cong H_{eh}^i(S,\mathbb G_{m,eh})$ for $i=0$ and $1$ for arbitrary proper schemes $S$ over $\C.$

Choose a smooth cdh hypercover $S_\bullet\longrightarrow S$ of $S$ and consider the canonical map of spectral sequences
\begin{displaymath}
\xymatrix{ _{cdh}E_1^{p,q}=H_{cdh}^q(S_p,\mathbb G_{m,cdh}) \ar@{=>}[r] \ar[d] & H_{cdh}^{p+q}(S,\mathbb G_{m,cdh}) \ar[d] \\
_{eh}E_1^{p,q}=H_{eh}^q(S_p,\mathbb G_{m,eh}) \ar@{=>}[r] & H_{eh}^{p+q}(S,\mathbb G_{m,eh})}
\end{displaymath}
Because $S_p$ is smooth, we have $_{cdh}E_1^{p,q}\cong H_{Zar}^q(S_p,\mathbb G_{m})$ and $_{eh}E_1^{p,q}=H_{\acute et}^q(S_p,\mathbb G_{m})$ by \cite[Proposition 13.27]{MVW} (or Theorem~\ref{thm: cdh descent for DB} for the \'etale case), where both isomorphisms are given by change of topologies. Therefore, the canonical maps $_{cdh}E_1^{p,q}\longrightarrow {_{eh}E}_1^{p,q}$ are isomorphisms if $q\leq 1;$ here we used Hilbert's Theorem 90 (\cite[Chapter III, Proposition 4.9]{Milne etale}) for the case $q=1.$ Since the spectral sequences under consideration are in the first quadrant, this is enough to conclude 
$$H_{cdh}^i(S,\mathbb G_{m,cdh})\buildrel\cong\over\longrightarrow H_{eh}^i(S,\mathbb G_{m,eh})$$
for $i=0$ and $1.$

For (ii) and (iii), by the diagram~(\ref{for corollary}), it suffices to show that the Deligne-Beilinson cycle map 
$$cl_{c,L}^\DB: H_{c,L}^2(X,\Z(1))\longrightarrow H_{c,\DB}^2(X,\Z(1))$$
is an isomorphism. The source and the target are both defined by eh hypercohomology, and the cycle map is by definition induced by the morphism $c_{eh}^\DB(1): \Z(1)^{SF}_{eh}\longrightarrow DB(1)_{eh}^\DB$ of complexes of eh sheaves in the derived category. Thus, taking a compactification of $X$ and arguing with localization sequences and the 5-lemma, we can see that it is enough to prove that the cycle map
$$cl_L^\DB: H_L^i(S,\Z(1))\longrightarrow H_{\DB}^i(S,\Z(1))$$
is an isomorphism for any proper scheme $S$ over $\C$ if $i=1$ or $2.$ 

Let $S_\bullet\longrightarrow S$ be a smooth proper cdh hypercover of $S$ and consider the maps of spectral sequences
\begin{displaymath}
\xymatrix{_{M}E_1^{p,q}=H_{cdh}^q(S_p,\mathbb G_{m,cdh}[-1]) \ar@{=>}[r] \ar[d] & H_{cdh}^{p+q}(S,\mathbb G_{m,cdh}[-1]) \ar[d]\\
_{L}E_1^{p,q}=H_{eh}^q(S_p,\mathbb G_{m,eh}[-1]) \ar@{=>}[r] \ar[d] & H_{eh}^{p+q}(S,\mathbb G_{m,eh}[-1]) \ar[d] \\
_{\DB}E_1^{p,q}=H_{\DB}^q(S_p,\Z(1)) \ar@{=>}[r] & H_\DB^{p+q}(S,\Z(1))}
\end{displaymath}
where the top vertical maps are induced by the change of topologies (same as in (i) except for the indexing) and the bottom ones by the composition of $ c_{eh}^\DB(1): \Z(1)^{SF}_{eh}\longrightarrow DB(1)_{eh}^\DB$ with the canonical quasi-isomorphism $\mathbb G_{m,eh}[-1]\simeq\Z(1)_{eh}^{SF}.$

The compositions $_{M}E_1^{p,q}\longrightarrow {_{L}E}_1^{p,q}\longrightarrow {_{\DB}E}_1^{p,q}$ of the $E_1$-terms are isomorphisms for $q\leq2.$ Indeed, it is trivial if $q=0.$ For $q=1$ or $2,$ since $S_p$ is smooth and proper, the claim is equivalent to that Bloch's cycle maps $CH^1(S_p,2-q)\longrightarrow H_\DB^q(S_p,\Z(1))$ are isomorphisms for $q=1$ and $2.$ If $q=1,$ we may assume that $S_p=\Spec\C$ because the structure morphism of any smooth proper connected scheme induces isomorphisms of both higher Chow group $CH^1(-,1)$ and Deligne-Beilinson cohomology $H_\DB^1(-,\Z(1)).$ In this case, the cycle map is indeed an isomorphism $CH^1(\Spec\C,1)\cong\C^*\buildrel\log\over\longrightarrow \C/2\pi i\Z\cong H_\DB^1(\Spec\C,\Z(1))$ by \cite[Section 5.7]{KLMS}. The case for $q=2$ follows from the Abel-Jacobi theorem and the Lefschetz theorem for smooth proper schemes by the diagram~(\ref{picture to extend}).

Now, as we have seen in the proof for (i), the first map $_{M}E_1^{p,q}\longrightarrow {_{L}E}_1^{p,q}$ is an isomorphism if $q\leq2,$ so the second map $_{L}E_1^{p,q}\longrightarrow {_{\DB}E}_1^{p,q}$ is also an isomorphism if $q\leq 2.$ Therefore, we conclude that 
$$cl_L^\DB: H_{L}^i(S,\Z(1))\cong H_{eh}^{i}(S,\mathbb G_{m,eh}[-1])\longrightarrow H_{\DB}^i(S,\Z(1))$$
is an isomorphism for any proper $S$ if $i=1$ or $2.$
\end{proof}


\section{Torsion part of cycle maps}\label{section: Study of torsion elements}

We prove that the cycle map $cl_{c,L}^\DB: H_{c,L}^m(X,\Z(n))\longrightarrow H_{c,\DB}^m(X,\Z(n))$ (resp. $cl_{L}^\DB: H_{L}^m(X,\Z(n))\longrightarrow H_{\DB}^m(X,\Z(n))$) is an isomorphism on torsion for an arbitrary scheme $X$ if $m\leq 2n$ (resp., $min\{2m-1,2\dim X+1\}\leq 2n$). See \cite[Proposition 5.1]{Rosenschon-Srinivas} for the case of smooth projective schemes but with a different construction of a cycle map.

By the commutativity of the diagram~(\ref{DB and B cycle compatible}), we have a map of distinguished triangles for any positive integer $a$
\begin{equation}\label{multiplication diagram}
\xymatrix{\Z(n)^{SF} \ar[r]^-{\times a} \ar[ddd]_-{c^\DB} & \Z(n)^{SF} \ar[ddd]_-{c^\DB} \ar[r]  & \Z(n)^{SF}\otimes\Z/a \ar[r]^-{[+1]}&\\
&& cone(\Z(n)^{SF}\buildrel\times a \over\longrightarrow \Z(n)^{SF}) \ar[d]_-{c^B} \ar[u]_-{\text{canonical isom.}}\\
&& cone(B(n)_{Zar}\buildrel\times a \over\longrightarrow B(n)_{Zar}) \ar[d]^{\text{canonical isom.}}\\
DB(n)_{Zar} \ar[r]^-{\times a} & DB(n)_{Zar} \ar[r]^q & B(\Z/a)_{Zar} \ar[r]^-{[+1]} &}
\end{equation}
with the bottom triangle being the one in (\ref{triangle DB multiplication}).

\begin{lem}\label{lem: mod a isomorphism}
The \'etale sheafification of the composition of the far right vertical arrows in the diagram~(\ref{multiplication diagram})
$$F: \Z(n)^{SF}\otimes\Z/a \longrightarrow B(\Z/a)_{\acute et}$$
is an isomorphism in $D(Sh_{\acute et}(Sm/\C)).$
\end{lem}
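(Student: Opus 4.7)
Proof proposal. The plan is to identify both sides of $F$ with $\mu_a^{\otimes n}$ (placed in degree $0$) in the étale derived category, so that $F$ becomes an endomorphism of $\mu_a^{\otimes n}$, i.e., an element of $\mathrm{End}_{Sh_{\acute et}(Sm/\C)}(\mu_a^{\otimes n})=\Z/a$, and then to check that this element is a unit.

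First I would identify the target. Applying the explicit description of $R\sigma_\tau$ on complexes of the form $\{(0,\mathcal F,0)\}$ given in the proof of Theorem~\ref{thm: etale descent}, $B(\Z/a)_{\acute et}$ is quasi-isomorphic to the constant étale sheaf $\underline{\Z(n)/a}$ concentrated in degree $0$ (the Tate twist $(n)$ is inherited from $B(n)$ via the triangle~(\ref{triangle DB multiplication}), which makes $B(\Z/a)$ canonically equivalent to $B(n)/a$). The exponential short exact sequence $0\to\Z(1)\to\C\to\C^\times\to0$ descends modulo $a$ to a canonical isomorphism $\Z(1)/a\cong\mu_a$ of constant étale sheaves; its $n$-th tensor power identifies $\underline{\Z(n)/a}$ with $\mu_a^{\otimes n}$. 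On the source side, the Suslin–Voevodsky rigidity theorem (\cite[Theorem 10.3]{MVW}) furnishes a canonical quasi-isomorphism $\Z(n)^{SF}_{\acute et}\otimes\Z/a\simeq \mu_a^{\otimes n}[0]$ in $D(Sh_{\acute et}(Sm/\C))$.

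Granting these two identifications, $F$ becomes a morphism $\mu_a^{\otimes n}\to\mu_a^{\otimes n}$ in the étale derived category, hence multiplication by a single element $c\in\Z/a$. To verify that $c\in(\Z/a)^\times$, I would evaluate $F$ on an explicit generator. By construction, $F$ is the reduction modulo $a$ of the Betti cycle map $c^B(n)$ followed by the canonical comparison $B(n)_{\acute et}/a\cong B(\Z/a)_{\acute et}\cong\mu_a^{\otimes n}$. Thus $F$ sends (the class of) a codimension-$n$ cycle $Z\subset X$ on a smooth scheme to the reduction mod $a$ of its Betti fundamental class, which under the classical comparison $H^{2n}_B(X,\Z(n))/a\cong H^{2n}_{\acute et}(X,\mu_a^{\otimes n})$ (Artin comparison, compatibly with the exponential identification $\Z(n)/a\cong\mu_a^{\otimes n}$) coincides with the étale fundamental class of $Z$. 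Since the Suslin–Voevodsky isomorphism is characterized in the same way by cycle classes (one can evaluate on any smooth $X$ with a codimension-$n$ cycle whose étale class is nontrivial mod $a$, e.g.\ a divisor on $\bP^1$ when $n=1$, and extend to general $n$ by multiplicativity of cycle classes combined with $\Z(1)^{SF}\simeq\Gm[-1]$ and the Kummer sequence), we get $c=\pm1$, so $F$ is a quasi-isomorphism.

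The main obstacle is the final identification: one must carefully match the Betti fundamental class of Esnault–Viehweg used in our construction of $c^B(n)$ with the étale fundamental class across the Artin comparison and the exponential identification $(2\pi i)^n\Z/a\cong\mu_a^{\otimes n}$, and confirm that this agrees (up to sign) with the normalization of the Suslin–Voevodsky isomorphism. Once this is in hand, the assertion that $c\in(\Z/a)^\times$ is immediate from a single computation, for instance with a $\C$-point of $\bP^1$ in the case $n=1$, extended to general $n$ by multiplicativity.
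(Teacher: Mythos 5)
Your strategy is viable but it is genuinely different from the paper's, and the difference matters for where the work lands. The paper does not identify the source abstractly with $\mu_a^{\otimes n}$ and then compute an endomorphism constant. Instead it reruns the cycle-class construction of Section~\ref{section: Deligne-Beilinson cycle maps} with \'etale hypercohomology in place of Zariski hypercohomology to produce a map $c^{\acute et}:\Z(n)^{SF}\longrightarrow\mu_a^{\otimes n}$ in $D(Sh_{\acute et}(Sm/\C))$, observes that $F$ factors as $A\circ(c^{\acute et}\otimes\Z/a)$ because Betti and \'etale fundamental classes correspond under Artin's comparison quasi-isomorphism $A:\mu_a^{\otimes n}\to B(\Z/a)_{\acute et}$, and then quotes Geisser--Levine, Theorem~1.5, which asserts precisely that this cycle map $\Z(n)^{SF}\otimes\Z/a\to\mu_a^{\otimes n}$ is a quasi-isomorphism. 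The payoff of that route is that the normalization issue you flag as the ``main obstacle'' never arises: the cited theorem is a statement about the cycle map itself, not about an abstract identification of the two complexes.

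In your version that obstacle is the crux, and as written it is deferred rather than resolved. To pin down $c\in\mathrm{End}_{D(Sh_{\acute et}(Sm/\C))}(\mu_a^{\otimes n})=\Z/a$ you need (a) that the Esnault--Viehweg Betti fundamental class, transported through Artin's comparison and the identification $(2\pi i)^n\Z/a(2\pi i)^na\Z\cong\mu_a^{\otimes n}$, equals the \'etale fundamental class (true and classical), and (b) that the Suslin--Voevodsky identification of $\Z(n)^{SF}\otimes\Z/a$ with $\mu_a^{\otimes n}$ sends cycle classes to cycle classes up to a unit for every $n$. Your proposed reduction of (b) to the case $n=1$ ``by multiplicativity'' presupposes a product structure on the Suslin--Friedlander complexes and the multiplicativity of the cycle class construction, neither of which is set up in the paper, so this step is a genuine gap as it stands. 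The cleanest fix is to replace the abstract citation by the cycle-map form of the theorem (Geisser--Levine, Theorem~1.5), which makes your final two paragraphs unnecessary; otherwise the compatibility (b) must be proved in full.
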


\begin{proof}
Let $\mathcal I^\bullet$ be an injective resolution of $\underline{\Z/a}$ in the cl-topology and let $o: Sh_{cl}(Sm/\C)\longrightarrow Sh_{\acute et}(Sm/\C)$ denote the forgetful functor. Then, there is a natural comparison map by Artin between $\Z/a$-coefficient Betti cohomology and $\mu_a^{\otimes n}$-coefficient \'etale cohomology given by a quasi-isomorphism of complexes of \'etale sheaves $A: \mu_a^{\otimes n}\buildrel{\text{qis}}\over\longrightarrow Ro_*(\mathcal I^\bullet);$ see \cite[Chapter III, Lemma 3.15]{Milne etale} and the paragraph that precedes it.

As we have explained at the beginning of the proof of Theorem~\ref{thm: etale descent}, $Ro_*(\mathcal I^\bullet)$ is precisely our $B(\Z/a)_{\acute et}.$ The same construction of a cycle map as in Section~\ref{section: Deligne-Beilinson cycle maps} (but this time with \'etale hypercohomology instead of Zariski hypercohomology) we may lift the \'etale cycle map to the map $c^{\acute et}:\Z(n)^{SF}\longrightarrow \mu_a^{\otimes n}$ in $D(Sh_{\acute et}(Sm/\C)).$ Since Betti and \'etale cycle maps are compatible with Artin's comparison map, we have a commutative diagram
\begin{displaymath}
\xymatrix{ \Z(n)^{SF} \ar[r]^-{c^{\acute et}} \ar[rd]_-{c^B} & \mu_a^{\otimes n} \ar[d]_-A^-{\text{qis}}\\
& B(\Z/a)_{\acute et}.}
\end{displaymath}
This gives rise to 
\begin{displaymath}
\xymatrix{ \Z(n)^{SF}\otimes\Z/a \ar[r]^-{c^{\acute et}} \ar[rd]_-{F} & \mu_a^{\otimes n} \ar[d]_-A^-{\text{qis}}\\
& B(\Z/a)_{\acute et}.}
\end{displaymath}
Since $c^{\acute et}:\Z(n)^{SF}\otimes\Z/a \longrightarrow \mu_a^{\otimes n}$ is a quasi-isomorphism by \cite[Theorem 1.5]{Geisser-Levine}, $F$ is also a quasi-isomorphism.
\end{proof}

\begin{rem}[{Interlude; cf. \cite[Section 3]{Geisser alg rep}}]\label{rem: max div}
For any scheme $X$ over $\C,$ the homological part $H_{(c),L,hom}^m(X,\Z(n))$ is the subgroup of all divisible elements of $H_{(c),L}^m(X,\Z(n)).$ Indeed, the homological part is nothing but the kernel of the canonical map
$$H_{(c),L}^m(X,\Z(n))\longrightarrow \lim_{a\in\Z_{>0}}H_{(c),L}^m(X,\Z(n))/a.$$
This follows from the commutativity of the diagram
\begin{displaymath}
\xymatrixcolsep{3pc}\xymatrix{ H_{(c),L}^m(X,\Z(n)) \ar[r]^-{cl_{(c),L}^B} \ar[d] & H_{(c),B}^m(X,\Z(n))  \ar@{_{(}->}[d]^f\\ 
\lim_{a}H_{(c),L}^m(X,\Z(n))/a \ar@{_{(}->}[d] &  \lim_{a}H_{(c),B}^m(X,\Z(n))/a \ar@{_{(}->}[d] \\
\lim_{a}H_{(c),L}^m(X,\Z/a(n)) \ar[r]^\cong_{\text{Lemma~\ref{lem: mod a isomorphism}}} &  \lim_{a}H_{(c),B}^m(X,\Z/a)}
\end{displaymath}
where $f$ is injective because $H_{(c),B}^m(X,\Z(n))$ is finitely generated.
\end{rem}

From Lemma~\ref{lem: mod a isomorphism} and the diagram~(\ref{multiplication diagram}), we obtain 

\begin{prop}\label{prop: comparison les}
For any $X\in Sch/\C,$ there is a commutative diagram with exact rows of cohomologies with or without compact supports
\begin{displaymath}
\xymatrix{\cdots\ar[r] & H_{(c),L}^{m-1}(X,\Z/a(n)) \ar[r] \ar[d]_{\text{isom.}} & H_{(c),L}^{m}(X,\Z(n)) \ar[r]^-{\times a} \ar[d]_{cl_{(c),L}^\DB} & H_{(c),L}^{m}(X,\Z(n)) \ar[r] \ar[d]_{cl_{(c),L}^\DB} & H_{(c),L}^{m}(X,\Z/a(n)) \ar[r] \ar[d]_{\text{isom.}}& \cdots\\
\cdots\ar[r] & H_{(c),B}^{m-1}(X,\Z/a) \ar[r]  & H_{(c),\DB}^{m}(X,\Z(n)) \ar[r]^-{\times a}  & H_{(c),\DB}^{m}(X,\Z(n)) \ar[r]  & H_{(c),B}^{m}(X,\Z/a) \ar[r] & \cdots}
\end{displaymath}
\end{prop}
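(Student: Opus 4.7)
The plan is to eh-sheafify the diagram~(\ref{multiplication diagram}) and then take eh hypercohomology of $X$, respectively eh hypercohomology with compact supports in the sense of \cite[Section 3]{Geisser eh}. Since eh sheafification is exact, the two rows of~(\ref{multiplication diagram}) remain distinguished triangles in $D(Sh_{eh}(Sch/\C))$, and the vertical arrows are still a morphism of such triangles. Passing to the associated long exact sequences (with or without compact supports) produces the two commutative diagrams asserted in the proposition, with exact rows and with vertical maps induced by $cl_{(c),L}^\DB$ in the middle columns and by the eh sheafification of $F$ on the outer columns.

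The next step is to identify the mod $a$ entries. The top-row group is by definition $H_{(c),L}^m(X,\Z/a(n))$, arising as the eh hypercohomology (with or without compact supports) of $\Z(n)^{SF}_{eh}\otimes^L\Z/a$, interpreted as the cone of multiplication by $a$ on $\Z(n)^{SF}_{eh}$. For the bottom row, the $\Z/a$ slot is $H_{(c),eh}^m(X, B(\Z/a)_{eh})$; this is canonically isomorphic to $H_{(c),B}^m(X,\Z/a)$ by Corollary~\ref{cor: eh descent for arbitrary schemes}~(iii) in the case without compact supports, and in the compact-support case by the very same localization argument used in the final paragraphs of the proof of Proposition~\ref{prop: reasonable} (choose a compactification $X \hookrightarrow \bar X$ with boundary $Z$, take an injective resolution $B(\Z/a)_{eh}\to \mathcal{I}^\bullet$, and compare the cone $cone(\mathcal I^\bullet(\bar X)\to \mathcal I^\bullet(Z))[-1]$ with the Betti localization sequence).

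Finally, the outer vertical map between the mod $a$ groups is, by construction of~(\ref{multiplication diagram}) and the identifications above, induced on eh hypercohomology by the eh sheafification of the composite $F$ of Lemma~\ref{lem: mod a isomorphism}. That lemma shows $F$ becomes a quasi-isomorphism after \'etale sheafification; since the eh topology is finer than the \'etale topology, $F$ remains a quasi-isomorphism after eh sheafification, i.e.\ in $D(Sh_{eh}(Sch/\C))$. Therefore it induces an isomorphism on eh hypercohomology of $X$, with or without compact supports, giving the ``isom.''\ labels on the outer vertical arrows.

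No substantive obstacle is expected; the only point requiring care is the compact-support identification of $H_{c,eh}^m(X, B(\Z/a)_{eh})$ with $H_{c,B}^m(X,\Z/a)$ for non-proper $X$, but this is already a verbatim repetition of work done in Section~\ref{section: cdh method}. The content of the proposition is really that Lemma~\ref{lem: mod a isomorphism} packages the Suslin-type rigidity input needed to compare the two Bockstein-type long exact sequences.
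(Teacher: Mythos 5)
Your proposal is correct and follows essentially the same route as the paper: eh-sheafify diagram~(\ref{multiplication diagram}), take eh hypercohomology (with or without compact supports), and invoke Lemma~\ref{lem: mod a isomorphism} together with the eh-descent identifications of the Betti terms (including the localization-sequence argument for the compact-support case). The extra details you supply—that a quasi-isomorphism after \'etale sheafification persists under the finer eh topology, and the compactification argument for $H_{c,eh}^m(X,B(\Z/a)_{eh})$—are exactly what the paper's terse proof implicitly relies on.
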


\begin{proof}
Take the eh sheafification of the diagram~(\ref{multiplication diagram}) and pass to the eh hypercohomology. The proposition follows from Corollary~\ref{cor: eh descent for smooth schemes}, Definition~\ref{defn: cdh definition of DB} and Lemma~\ref{lem: mod a isomorphism}.
\end{proof}

Let us prove another lemma before the main theorem of this section.

\begin{lem}\label{lem: DB and torsion}
For any scheme $X,$ any positive integer $a$ and $m\leq 2n-1,$ there is a canonical isomorphism
$$H_{c,\DB}^m(X,\Z(n))\otimes\Z/a\buildrel \cong\over\longrightarrow H_{c,B}^{m}(X,\Z(n))_{tor}\otimes\Z/a.$$

Similarly, $H_{\DB}^m(X,\Z(n))\otimes\Z/a\buildrel \cong\over\longrightarrow H_{B}^{m}(X,\Z(n))_{tor}\otimes\Z/a$ if $h:=min\{2m,2\dim X\}\leq 2n-1.$
\end{lem}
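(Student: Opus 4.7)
The plan is to derive both isomorphisms directly from the short exact sequence of Proposition~\ref{prop: Jac DB Hodge sequence} by tensoring with $\Z/a$ and exploiting divisibility of the Carlson Jacobian in the relevant range.

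I would start from the short exact sequence
$$0\longrightarrow J_{(c)}^{m,n}(X) \longrightarrow H_{(c),\DB}^{m}(X,\Z(n)) \longrightarrow i^{-1}(F^nH_{(c),B}^m(X,\C))\longrightarrow 0$$
of Proposition~\ref{prop: Jac DB Hodge sequence}. Under the hypothesis $m\leq 2n-1$ (resp.\ $h\leq 2n-1$), the equalities (\ref{hodge cycles and torsions}) and (\ref{hodge cycles and torsions without supports}) of that proposition apply, so the right-hand term in the sequence is precisely $H_{(c),B}^m(X,\Z(n))_{tor}$. Thus the canonical map $H_{(c),\DB}^m(X,\Z(n))\to H_{(c),B}^m(X,\Z(n))$ induced by (the eh sheafification of) the projection $DB(n)_{eh}\to B(n)_{eh}$ lands in the torsion subgroup, and we obtain an induced homomorphism
$$H_{(c),\DB}^m(X,\Z(n))\otimes\Z/a \longrightarrow H_{(c),B}^m(X,\Z(n))_{tor}\otimes\Z/a,$$
which is the canonical map in the statement.

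The main point is to check divisibility of the kernel $J_{(c)}^{m,n}(X)$. The discussion following Definition~\ref{defn: intermediate Jacobians with compact supports} shows that, in the compactly supported case, $J_c^{m,n}(X)$ is a generalized torus whenever $m\leq 2n$, which is implied by our hypothesis $m\leq 2n-1$. In the non-compact case, the same discussion requires $\min\{2m-1,2\dim X+1\}\leq 2n$, and this follows from $h=\min\{2m,2\dim X\}\leq 2n-1$ (a short arithmetic verification: the case $2m\leq 2n-1$ gives $2m-1\leq 2n-2\leq 2n$, and the case $2\dim X\leq 2n-1$ gives $2\dim X+1\leq 2n$). A generalized torus is the quotient of a complex vector space by a discrete subgroup, hence a quotient of a divisible abelian group, hence itself divisible; so $J_{(c)}^{m,n}(X)\otimes\Z/a=0$.

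Applying the right-exact functor $-\otimes\Z/a$ to the short exact sequence above now yields
$$0=J_{(c)}^{m,n}(X)\otimes\Z/a \longrightarrow H_{(c),\DB}^{m}(X,\Z(n))\otimes\Z/a \longrightarrow H_{(c),B}^m(X,\Z(n))_{tor}\otimes\Z/a \longrightarrow 0,$$
which is exactly the desired isomorphism. The only step that requires any real argument is the divisibility of $J_{(c)}^{m,n}(X)$; everything else is a direct invocation of Proposition~\ref{prop: Jac DB Hodge sequence}, so I expect no substantive obstacle.
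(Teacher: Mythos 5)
Your proof is correct, and it diverges from the paper's in the second half of the argument. Both proofs start from the same source: the sequence relating $H_{(c),\DB}^m(X,\Z(n))$ to $H_{(c),B}^m(X,\Z(n))$ and $H_{(c),B}^m(X,\C)/F^n$, together with the weight bound of Proposition~\ref{prop: Jac DB Hodge sequence} identifying the image of $H_{(c),\DB}^m(X,\Z(n))\to H_{(c),B}^m(X,\Z(n))$ with the torsion subgroup; this gives surjectivity after applying $\otimes\Z/a$ in both treatments. For injectivity, however, the paper embeds $H_{(c),\DB}^m(X,\Z(n))\otimes\Z/a$ into $H_{(c),B}^m(X,\Z/a)$ via the Bockstein-type sequence of Proposition~\ref{prop: DB an exact sequence} and concludes by a commutative square, whereas you observe that the kernel of $H_{(c),\DB}^m(X,\Z(n))\to H_{(c),B}^m(X,\Z(n))_{tor}$ is $J_{(c)}^{m,n}(X)$ and that this group dies under $\otimes\Z/a$, so right-exactness of the tensor product finishes the argument. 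Your route is more elementary in that it avoids Proposition~\ref{prop: DB an exact sequence} entirely; the paper's route has the side benefit of exhibiting the mod-$a$ Deligne--Beilinson group inside mod-$a$ Betti cohomology, which is in the spirit of how the lemma is used later. One simplification you could make: the detour through the generalized-torus criterion (and the accompanying arithmetic check of the weight inequalities) is unnecessary for divisibility, since $J_{(c)}^{m,n}(X)$ is by definition a quotient of the $\C$-vector space $H_{(c),B}^{m-1}(X,\C)$, and any quotient of a divisible abelian group is divisible; hence $J_{(c)}^{m,n}(X)\otimes\Z/a=0$ with no hypothesis on $(m,n)$ at all.
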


\begin{proof}
Consider the long exact sequence 
$$\cdots\longrightarrow H_{(c),\DB}^{m}(X,\Z(n))\buildrel g\over\longrightarrow H_{(c),B}^{m}(X,\Z(n))\buildrel f\over\longrightarrow H_{(c),B}^{m}(X,\C)/F^{n}\longrightarrow\cdots$$
in Proposition~\ref{prop: reasonable}.

We have $\mathrm{im}~g=\ker f=H_{(c),B}^m(X,\Z(n))_{tor}$ in the range under consideration for the weight reason (Proposition~\ref{prop: Jac DB Hodge sequence}). Tensoring $g$ with $\Z/a,$ we obtain the surjection
\begin{displaymath}
\xymatrix{H_{(c),\DB}^{m}(X,\Z(n))\otimes\Z/a\ar@{->>}[r]^-{g_a} & H_{(c),B}^{m}(X,\Z(n))_{tor}\otimes\Z/a.}
\end{displaymath}
This map is also injective because it fits in the commutative diagram
\begin{displaymath}
\xymatrix{  H_{(c),\DB}^{m}(X,\Z(n))\otimes\Z/a \ar@{->>}[r]^-{g_a} \ar@{_{(}->}[d]& H_{(c),B}^{m}(X,\Z(n))_{tor}\otimes\Z/a \ar[d]\\
H_{(c),B}^{m}(X,\Z/a) & H_{(c),B}^{m}(X,\Z(n))\otimes\Z/a \ar[l]}
\end{displaymath}
where all maps are the obvious ones and the left vertical map is injective by Proposition~\ref{prop: DB an exact sequence}.
\end{proof}

\begin{thm}[{cf.~\cite[Proposition 5.1]{Rosenschon-Srinivas}}]\label{thm: torsion main}
Let $X$ be an arbitrary scheme over $\C$ and let $n$ be a positive integer. Then,
\begin{enumerate}
\item $cl_{(c),L}^\DB: H_{(c),L}^m(X,\Z(n)) \longrightarrow H_{(c),\DB}^m(X,\Z(n))$ is surjective on torsion and has a torsion-free cokernel.
\item $cl_{c,L}^\DB: H_{c,L}^m(X,\Z(n)) \longrightarrow H_{c,\DB}^m(X,\Z(n))$ is an isomorphism on torsion if $m\leq 2n.$
\item $cl_L^\DB: H_{L}^m(X,\Z(n)) \longrightarrow H_{\DB}^m(X,\Z(n))$ is an isomorphism on torsion if $min\{2m-1,2\dim X+1\}\leq 2n.$
\end{enumerate}
\end{thm}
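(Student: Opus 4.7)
The strategy is to apply the snake lemma to the Bockstein ladder produced by Proposition~\ref{prop: comparison les}. Splitting each horizontal long exact sequence there into short exact Bockstein pieces gives a commutative diagram with exact rows
\begin{displaymath}
\xymatrix@C=1pc{
0 \ar[r] & H_{(c),L}^{m-1}(X,\Z(n))/a \ar[r] \ar[d]_-{cl_{(c),L}^\DB} & H_{(c),L}^{m-1}(X,\Z/a(n)) \ar[r] \ar[d]^-{\cong} & H_{(c),L}^m(X,\Z(n))[a] \ar[r] \ar[d]^-{cl_{(c),L}^\DB} & 0 \\
0 \ar[r] & H_{(c),\DB}^{m-1}(X,\Z(n))/a \ar[r] & H_{(c),B}^{m-1}(X,\Z/a) \ar[r] & H_{(c),\DB}^m(X,\Z(n))[a] \ar[r] & 0
}
\end{displaymath}
whose middle vertical is the isomorphism of Proposition~\ref{prop: comparison les}. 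The snake lemma collapses to three identifications: the left vertical is injective, the right vertical is surjective, and the connecting map identifies $\ker(\text{right vertical})$ with $\mathrm{coker}(\text{left vertical})$.

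Part (i) now drops out. Surjectivity of the right vertical for every $a$ gives surjectivity of $cl_{(c),L}^\DB$ on torsion. Injectivity of the left vertical, applied in every degree and for every $a$, is equivalent to $aH_{(c),\DB}^m(X,\Z(n))\cap \mathrm{im}(cl_{(c),L}^\DB)=a\cdot\mathrm{im}(cl_{(c),L}^\DB)$, i.e.\ $\mathrm{coker}(cl_{(c),L}^\DB)$ has trivial $a$-torsion for every $a>0$, hence is torsion-free.

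For (ii) and (iii), the snake identification reduces injectivity of $cl_{(c),L}^\DB$ on $a$-torsion to surjectivity of the left vertical in degree $m-1$. I would obtain the latter as the composition
\begin{displaymath}
H_{(c),L}^{m-1}(X,\Z(n))_{tor}\twoheadrightarrow H_{(c),\DB}^{m-1}(X,\Z(n))_{tor}\twoheadrightarrow H_{(c),\DB}^{m-1}(X,\Z(n))/a,
\end{displaymath}
where the first arrow is part (i) applied in degree $m-1$ (this is harmless circularity, since (i) is proven without any range hypothesis) and the second is where the range condition enters. Indeed, Proposition~\ref{prop: Jac DB Hodge sequence} supplies, precisely under $m-1<2n$ in the compact case and $\min\{2(m-1),2\dim X\}<2n$ in the non-compact case (which translate to the hypotheses $m\leq 2n$ of (ii) and $\min\{2m-1,2\dim X+1\}\leq 2n$ of (iii) respectively), a short exact sequence
\begin{displaymath}
0\to J_{(c)}^{m-1,n}(X)\to H_{(c),\DB}^{m-1}(X,\Z(n))\to H_{(c),B}^{m-1}(X,\Z(n))_{tor}\to 0
\end{displaymath}
with divisible kernel (a generalized torus by \cite{Carlson}) and finite torsion quotient. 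A standard lift along the divisible kernel then gives $H_{(c),\DB}^{m-1}(X,\Z(n))_{tor}\twoheadrightarrow H_{(c),B}^{m-1}(X,\Z(n))_{tor}$, and post-composing with the surjection to the mod-$a$ reduction together with the identification of Lemma~\ref{lem: DB and torsion} produces the second arrow. Combined with surjectivity from (i), the cycle map becomes an isomorphism on $a$-torsion for each $a$, hence on torsion.

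The main obstacle is the second arrow in the factorization, whose validity rests on divisibility of the Carlson intermediate Jacobian together with Lemma~\ref{lem: DB and torsion}; this is where the weight-theoretic input genuinely enters and dictates the range conditions in (ii) and (iii). Everything else is routine snake-lemma bookkeeping stacked on top of the already-established mod-$a$ comparison.
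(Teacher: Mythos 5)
Your strategy is essentially the paper's own: both proofs run the snake lemma on the mod-$a$ ladder from Proposition~\ref{prop: comparison les} and then invoke the weight constraint on the degree-$(m-1)$ Deligne-Beilinson group to control the left vertical. The paper passes to the colimit over $a$ and uses Lemma~\ref{lem: DB and torsion} to show $H_{(c),\DB}^{m-1}(X,\Z(n))\otimes\Q/\Z$ vanishes, so the cokernel of the left vertical is trivially zero; you stay at finite level and show the left vertical is surjective by lifting through the divisible Carlson Jacobian supplied by Proposition~\ref{prop: Jac DB Hodge sequence}. These are two phrasings of the same input, so the route is not genuinely different.

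There is one real gap, though easily patched. In part (i) you assert that injectivity of the left vertical for every degree and every $a$, rephrased as $aH_{(c),\DB}^m(X,\Z(n))\cap \mathrm{im}(cl_{(c),L}^\DB)=a\cdot\mathrm{im}(cl_{(c),L}^\DB)$, is the same as saying $\mathrm{coker}(cl_{(c),L}^\DB)$ has trivial $a$-torsion. That implication is false in general: for $\phi:\Z\to\Z\oplus\Z/2$, $1\mapsto(2,\bar1)$, one has $2(\Z\oplus\Z/2)\cap\mathrm{im}\,\phi=\{(4k,\bar0)\}=2\cdot\mathrm{im}\,\phi$, and yet $\mathrm{coker}\,\phi\cong\Z/4$ has $2$-torsion. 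The point is that if $ay\in\mathrm{im}(cl)$, your condition only produces $z$ with $ay=a\cdot cl(z)$, so $y-cl(z)$ lies in the $a$-torsion of $H_{(c),\DB}^m(X,\Z(n))$; to absorb this into the image you must additionally use surjectivity of $cl_{(c),L}^\DB$ on torsion, which is the output of the right vertical. The paper's diagram chase uses both snake-lemma conclusions jointly, and so should you. Since you have already established surjectivity on torsion in the same breath, the repair costs nothing, but as written the ``i.e.'' step is not correct.
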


\begin{proof}
By Proposition~\ref{prop: comparison les}, we have a diagram with exact rows
\begin{equation}\label{diagram: for torsion part}
\xymatrix{ 0 \ar[r] & H_{(c),L}^{m-1}(X,\Z(n))\otimes \Q/\Z \ar[r] \ar[d]_{cl_{(c),L}^\DB\otimes\Q/\Z} &  H_{(c),L}^{m-1}(X,\Q/\Z(n)) \ar[r] \ar[d]^{\text{isom.}} & H_{(c),L}^m(X,\Z(n))_{tor} \ar[d]_{cl_{(c),L,tor}^\DB} \ar[r] & 0  \\
0 \ar[r] & H_{(c),\DB}^{m-1}(X,\Z(n))\otimes \Q/\Z \ar[r] &  H_{(c),B}^{m-1}(X,\Q/\Z) \ar[r]  & H_{(c),\DB}^m(X,\Z(n))_{tor} \ar[r] & 0}
\end{equation}
Hence, by the snake lemma, $cl_{(c),L}^\DB\otimes \Q/\Z$ is injective and $cl_{(c),L,tor}^\DB$ is surjective. The torsion-freeness of $\mathrm{coker}(cl_{(c)}^\DB)$ follows from these by a diagram chase performed on 
\begin{displaymath}
\xymatrix{ 0 \ar[r] & _{tor}A \ar[r] \ar@{->>}[d]_{cl_{(c),L,tor}^\DB} & A \ar[r] \ar[d]_{cl_{(c),L}^\DB} & A\otimes\Q \ar[r] \ar[d]_{cl_{(c),L}^\DB\otimes \Q} & A\otimes\Q/\Z \ar[r] \ar@{_{(}->}[d]_{cl_{(c),L}^\DB\otimes \Q/\Z} & 0\\
0 \ar[r] & _{tor}B \ar[r] &  B \ar[r] \ar@{->>}[d] & B\otimes\Q \ar[r] \ar@{->>}[d] & B\otimes\Q/\Z \ar[r] & 0\\
&& \mathrm{coker}(cl_{(c),L}^\DB) \ar[r] & \mathrm{coker}(cl_{(c),L}^\DB)\otimes \Q}
\end{displaymath}
where $A=H_{(c),L}^{m}(X,\Z(n))$ and $B=H_{(c),\DB}^{m}(X,\Z(n)).$ This finishes the proof for (i).

For (ii), by the diagram~(\ref{diagram: for torsion part}), it suffices to show that $\mathrm{coker}(cl_{c,L}^\DB\otimes \Q/\Z)$ is trivial. Indeed, by Lemma~\ref{lem: DB and torsion}, we have more strongly
$$H_{c,\DB}^{m-1}(X,\Z(n))\otimes \Q/\Z\cong\colim_a H_{c,\DB}^{m-1}(X,\Z(n))\otimes \Z/a\cong\colim_a H_{c,B}^{m-1}(X,\Z(n))_{tor}\otimes\Z/a =0.$$ 

The proof for (iii) is similar.
\end{proof}

\begin{rem}
Suppose $(m,n)$ is in the range of Theorem~\ref{thm: torsion main}(ii) (resp., Theorem~\ref{thm: torsion main}(iii)). In the proof of the theorem, we showed that the group $H_{c,\DB}^{m-1}(X,\Z(n))\otimes \Q/\Z$ (resp., $H_{\DB}^{m-1}(X,\Z(n))\otimes \Q/\Z$) vanishes. Therefore, $H_{(c),\DB}^m(X,\Z(n))_{tor}$ is canonically isomorphic to $H_{(c),B}^{m-1}(X,\Q/\Z).$
\end{rem}

\begin{cor}\label{cor: on torsion}
In the diagram~(\ref{for corollary}) for any $X\in Sch/\C,$ 
\begin{enumerate}
\item $AJ_c^{m,n}: H_{c,L,hom}^m(X,\Z(n))\longrightarrow J_c^{m,n}(X)$ (resp., $AJ^{m,n}: H_{L,hom}^m(X,\Z(n))\longrightarrow J^{m,n}(X)$) is an isomorphism on torsion if $m\leq 2n$ (resp., $min\{2m-1,2\dim X+1\}\leq 2n$). In these ranges, if $X$ is connected, the Jacobians $J_{(c)}^{m,n}(X)$ are generalized complex tori.
\item The quotient group $i^{-1}(F^nH_{(c),B}^m(X,\C))/im(cl_{(c),L}^B)$ is always torsion free. In particular, $$cl_{c,L}^B: H_{c,L}^m(X,\Z(n))\longrightarrow H_{c,B}^m(X,\Z(n))$$ 
\begin{center}
(resp., $cl_{L}^B: H_{L}^m(X,\Z(n))\longrightarrow H_{B}^m(X,\Z(n))$)
\end{center}
has the image $$i^{-1}(F^nH_{c,B}^m(X,\C))=H_{c,B}^m(X,\Z(n))_{tor}$$ 
\begin{center}
(resp., $i^{-1}(F^nH_{B}^m(X,\C))=H_{B}^m(X,\Z(n))_{tor}$) 
\end{center}
if $m<2n$ (resp., $min\{2m,2\dim X\}<2n$). 
\end{enumerate}
\end{cor}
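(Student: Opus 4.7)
The plan is to deduce both parts of the corollary from the snake lemma applied to the commutative diagram~(\ref{for corollary}) with exact rows, combined with Theorem~\ref{thm: torsion main} and the fact that $J_{(c)}^{m,n}(X)$ is a divisible abelian group (being a quotient of the finite-dimensional complex vector space $H_{(c),B}^{m-1}(X,\C)$ by a subgroup, hence a quotient of a divisible group). Observe also that the rightmost vertical map in~(\ref{for corollary}) is injective by construction, so its kernel vanishes.

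For (i), I would pass to torsion subgroups. The left injections in each row of~(\ref{for corollary}) survive the torsion functor, so I get two left-exact sequences of torsion subgroups. In the range $m\leq 2n$ (respectively $\min\{2m-1,2\dim X+1\}\leq 2n$), Theorem~\ref{thm: torsion main}(ii) (resp.\ (iii)) says the middle vertical $cl_{(c),L}^\DB$ restricts to an isomorphism on torsion, while the right vertical is injective on torsion. A short four-lemma-style diagram chase (inject and surject separately) then forces $AJ_{(c),L}^{m,n}$ to be an isomorphism on torsion as well. The claim that $J_{(c)}^{m,n}(X)$ is a generalized complex torus in these ranges was already recorded in Section~\ref{section: Intermediate Jacobians with compact supports} via \cite[Lemma 6]{Carlson}, so nothing additional is needed there.

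For (ii), the snake lemma (combined with $\ker h = 0$ where $h$ denotes the rightmost inclusion in (\ref{for corollary})) yields the short exact sequence
\begin{equation*}
0 \longrightarrow \mathrm{coker}(AJ_{(c),L}^{m,n}) \longrightarrow \mathrm{coker}(cl_{(c),L}^\DB) \longrightarrow i^{-1}(F^nH_{(c),B}^m(X,\C))/\mathrm{im}(cl_{(c),L}^B) \longrightarrow 0.
\end{equation*}
Theorem~\ref{thm: torsion main}(i) says the middle term is torsion-free, but the rightmost term is only a \emph{quotient} of it, so this is not quite enough. The main technical step will be a direct divisibility argument: given $c' \in i^{-1}(F^nH_{(c),B}^m(X,\C))$ with $ac' \in \mathrm{im}(cl_{(c),L}^B)$ for some positive integer $a$, lift $c'$ to $b' \in H_{(c),\DB}^m(X,\Z(n))$ using the surjection in Proposition~\ref{prop: Jac DB Hodge sequence}, and lift $ac'$ to $b \in H_{(c),L}^m(X,\Z(n))$. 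Then $cl_{(c),L}^\DB(b) - ab'$ lies in $J_{(c)}^{m,n}(X)$, and by divisibility it equals $aj$ for some $j \in J_{(c)}^{m,n}(X)$. So $cl_{(c),L}^\DB(b) = a(b' + j)$, and torsion-freeness of $\mathrm{coker}(cl_{(c),L}^\DB)$ forces $b' + j$ into the image of $cl_{(c),L}^\DB$; projecting down to $i^{-1}(F^n\cdots)$ then shows $c' \in \mathrm{im}(cl_{(c),L}^B)$, as desired.

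The ``in particular'' clauses then drop out immediately: by the equalities~(\ref{hodge cycles and torsions}) and~(\ref{hodge cycles and torsions without supports}) of Proposition~\ref{prop: Jac DB Hodge sequence}, in the stated ranges $i^{-1}(F^nH_{(c),B}^m(X,\C))$ coincides with $H_{(c),B}^m(X,\Z(n))_{tor}$, so a torsion-free quotient of this torsion group must vanish, forcing the image of $cl_{(c),L}^B$ to fill out the entire group. The only genuinely subtle point I anticipate is setting up the divisibility trick cleanly; the rest reduces to standard chases on~(\ref{for corollary}).
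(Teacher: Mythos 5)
Your proposal is correct and follows essentially the same route as the paper: part (i) is the same diagram chase from Theorem~\ref{thm: torsion main}~(ii),(iii), and part (ii) uses the same ingredients (the snake-lemma cokernel sequence, divisibility of $J_{(c)}^{m,n}(X)$, and torsion-freeness of $\mathrm{coker}(cl_{(c),L}^\DB)$ from Theorem~\ref{thm: torsion main}~(i)). The paper phrases the part (ii) argument more abstractly --- a short exact sequence with divisible kernel and torsion-free middle term has torsion-free cokernel --- whereas your explicit element chase unwinds exactly this fact; the mathematical content is the same.
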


\begin{proof}
The first part of (i) is immediate from Theorem~\ref{thm: torsion main} (ii) and (iii). The second part of (i) is already explained in the paragraph right after Definition~\ref{defn: intermediate Jacobians with compact supports}.

For (ii), taking the cokernels of the vertical arrows in the diagram~(\ref{for corollary}), we obtain the exact sequence
$$0\longrightarrow \mathrm{coker}~AJ_{(c)}^{m,n}\longrightarrow \mathrm{coker}~cl_{(c),L}^\DB\longrightarrow i^{-1}(F^nH_{(c),B}^m(X,\C))/im(cl_{(c),L}^B)\longrightarrow 0.$$ 
The torsion-freeness of $i^{-1}(F^nH_{(c),B}^m(X,\C))/im(cl_{(c),L}^B)$ follows because $\mathrm{coker}~AJ_{(c)}^{m,n},$ being a quotient of a vector space, is divisible and $\mathrm{coker}~cl_{(c),L}^\DB$ is torsion-free by Theorem~\ref{thm: torsion main} (i).
\end{proof}


\section{Griffiths intermediate Jacobians revisited}\label{section: Griffiths intermediate Jacobians revisited}

To finish this paper, we would like to reconsider Griffiths intermediate Jacobians from the viewpoint of Lichtenbaum cohomology. With Theorem~\ref{thm: torsion main}, we may characterize the ``algebraic part" of Griffiths intermediate Jacobians by a universal property. In the rest of this paper, we assume that $X$ is smooth, proper and connected. 

Samuel \cite{Samuel} introduced regular homomorphisms to relate the algebraic part $CH_{alg}^r(X):=\{z\in CH^r(X)| z~\text{is algebraically equivalent to}~0\}$ of the Chow group $CH^r(X)$ of $X$ over an arbitrary algebraically closed field $k$ with abelian varieties over the same base field $k.$ It is known that, regardless of the base field, there is a universal regular homomorphism if the codimension $r$ is $1$ or $\dim X$ ([ibid.]), but the existence is unknown in other codimensions. However, over the field of complex numbers, we have Abel-Jacobi maps $AJ_{X}^r: CH_{hom}^r(X)\longrightarrow J_G^r(X)$ for all $r,$ and their restrictions to the algebraic part $AJ_{X,alg}^r: CH_{alg}^r(X)\longrightarrow J_G^r(X)_{alg}:=AJ_X^r(CH_{alg}^r(X))$ are known to be regular (see \cite{LiebermanMotive}). $AJ_{X,alg}^r$ agrees with the universal regular homomorphism if $r=1,2$ or $\dim X$ (\cite{Murre}), but it is generally unknown if the algebraic part of Abel-Jacobi maps are universal regular for general $r.$

While the existence of universal regular homomorphisms are not known in general, Geisser (\cite[Section 3]{Geisser alg rep}) showed that an analogue for Lichtenbaum cohomology $H_L^{2r}(X,\Z(r))$ exists for all codimensions $r$ over any algebraically closed base field $k.$ Here, we consider its variant.

Consider the canonical natural maps 
\begin{equation}\label{Chow to etale Chow}
CH^r(X)\cong H^{2r}(X,\Z(r))\buildrel\text{def.}\over=H_{Nis}^{2r}(X,\Z(r)^{SF})\longrightarrow H_{\acute et}^{2r}(X,\Z(r)^{SF})\buildrel\text{def.}\over= H_L^{2r}(X,\Z(r)),
\end{equation}
and define $H_{L,alg}^{2r}(X,\Z(r))$ as the image of $CH_{alg}^r(X)$ under (\ref{Chow to etale Chow}), contrary to the definition in \emph{loc. cit.} Let us say that a group homomorphism
$$\phi: H_{L,alg}^{2r}(X,\Z(r))\longrightarrow A(k)$$
such that $A$ is an abelian variety over $k$ is {\bf $L$-regular} if its composition with the canonical map $CH_{alg}^r(X)\longrightarrow H_{L,alg}^{2r}(X,\Z(r))$ is regular in Samuel's sense, i.e. for any smooth proper connected $T$ over $k$ and any $Y\in CH^r(T\times X),$ the composition
$$T(k)\buildrel w_Y \over\longrightarrow CH_{alg}^r(X)\longrightarrow H_{L,alg}^{2r}(X,\Z(r))\buildrel\phi\over\longrightarrow A(k)$$
is a scheme morphism, where $w_Y$ sends $t\in T(k)$ to the pullback of $Y$ along $X\cong\Spec\C\times X\buildrel t\times id_X\over\longrightarrow T\times X.$

The existence of the universal $L$-regular homomorphism $\Phi^r_{L,X}: H_{L,alg}^{2r}(X,\Z(r))\longrightarrow Alg_{L,X}^r(k)$ for any $r$ and any smooth proper connected scheme $X$ over $k$ follows by the same argument as in ([ibid., Theorem 3.5]). The map $\Phi^r_{L,X}$ is surjective and surjective on torsion by the construction. We have the following corollary to Theorem~\ref{thm: torsion main}, which may be regarded as an algebraic construction of the algebraic part of Griffiths intermediate Jacobians and Abel-Jacobi maps which works over any characteristic.

\begin{cor}
Let $X$ be a smooth proper connected scheme over $\C.$ The composition
$$CH_{alg}^r(X)\longrightarrow  H_{L,alg}^{2r}(X,\Z(r))\buildrel \Phi^r_{L,X}\over\longrightarrow Alg_{L,X}^r(\C)$$
of the universal $L$-regular homomorphism $\Phi^r_{L,X}$ with the canonical map $CH_{alg}^r(X)\longrightarrow  H_{L,alg}^{2r}(X,\Z(r))$ is nothing but the algebraic part of the Abel-Jacobi map $AJ_{X,alg}^r: CH_{alg}^r(X)\longrightarrow J_G^r(X)_{alg}.$ 
\end{cor}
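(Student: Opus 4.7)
The plan is to identify $Alg_{L,X}^r$ with $J_G^r(X)_{alg}$ via a morphism of abelian varieties supplied by the universal property of $\Phi^r_{L,X},$ and then promote that morphism to an isomorphism using Theorem~\ref{thm: torsion main}.

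First I would construct a surjection $\psi: H_{L,alg}^{2r}(X,\Z(r))\longrightarrow J_G^r(X)_{alg}$ by restricting the Abel--Jacobi map $AJ^{2r,r}_L$ of the diagram~(\ref{for corollary}) with $(m,n)=(2r,r).$ Two checks are needed: (a) $H_{L,alg}^{2r}(X,\Z(r))\subseteq H_{L,hom}^{2r}(X,\Z(r)),$ because algebraically trivial cycles are Betti-homologically trivial and, by Theorem~\ref{thm: DB cycle map}, the canonical map $CH^r(X)\longrightarrow H_L^{2r}(X,\Z(r))$ is compatible with Betti cycle classes; (b) $AJ^{2r,r}_L$ sends $H_{L,alg}^{2r}(X,\Z(r))$ into $J_G^r(X)_{alg},$ because, again by Theorem~\ref{thm: DB cycle map}, its composition with $CH_{alg}^r(X)\longrightarrow H_{L,alg}^{2r}(X,\Z(r))$ is the classical Abel--Jacobi map $AJ_{X,alg}^r,$ whose image is $J_G^r(X)_{alg}$ by definition. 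By Lieberman's regularity result~\cite{LiebermanMotive}, $AJ_{X,alg}^r$ is regular in Samuel's sense, so $\psi$ is $L$-regular; the universal property of $\Phi^r_{L,X}$ then produces a unique morphism of abelian varieties $\alpha: Alg_{L,X}^r\longrightarrow J_G^r(X)_{alg}$ with $\psi=\alpha\circ\Phi^r_{L,X},$ and $\alpha$ is surjective on $\C$-points because $\Phi^r_{L,X}$ is.

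The main obstacle is injectivity of $\alpha.$ I would reduce it to injectivity on torsion $\C$-points: over $\C,$ the identity component of $\ker\alpha$ is an abelian variety (hence either trivial or torsion-rich), while any non-trivial finite \'etale group over $\C$ contains a non-trivial $\C$-point that is automatically torsion. So it suffices to show that a torsion element $x\in Alg_{L,X}^r(\C)$ with $\alpha(x)=0$ vanishes. Since $\Phi^r_{L,X}$ is surjective on torsion (built into Geisser's construction), I lift $x$ to a torsion class $y\in H_{L,alg}^{2r}(X,\Z(r))\subseteq H_{L,hom}^{2r}(X,\Z(r));$ then $AJ^{2r,r}_L(y)=\psi(y)=\alpha(x)=0.$ Because $X$ is smooth and proper, compactly supported and ordinary cohomology coincide, so Theorem~\ref{thm: torsion main}~(ii) applies with $m=2r=2n$ and forces $y=0,$ hence $x=0.$

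With $\alpha$ established as an isomorphism of abelian varieties, the defining relation $\psi=\alpha\circ\Phi^r_{L,X},$ precomposed with the canonical map $CH_{alg}^r(X)\longrightarrow H_{L,alg}^{2r}(X,\Z(r)),$ identifies $\alpha\circ\Phi^r_{L,X}\circ(\text{canonical})$ with $\psi\circ(\text{canonical})=AJ^r_{X,alg},$ which is precisely the content of the corollary.
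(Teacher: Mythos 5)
Your proposal is correct and follows essentially the same route as the paper: factor $AJ_{X,alg}^r$ through $H_{L,alg}^{2r}(X,\Z(r))$ via the restriction of $cl_L^\DB,$ invoke the universal property of $\Phi_{L,X}^r$ to obtain a surjective morphism of abelian varieties onto $J_G^r(X)_{alg},$ and upgrade it to an isomorphism by checking injectivity on torsion via Theorem~\ref{thm: torsion main}~(ii) together with the surjectivity of $\Phi_{L,X}^r$ on torsion. The only cosmetic difference is that you verify the factorization compatibilities by hand where the paper cites \cite[Theorem 7.11]{Esnault-Viehweg}, and you spell out the standard reduction ``surjective plus injective on torsion implies isomorphism of abelian varieties'' that the paper leaves implicit.
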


\begin{proof}
By Theorems~\ref{thm: etale descent} (i) and~\ref{thm: DB cycle map}, the Deligne-Beilinson cycle map factors through Lichtenbaum cohomology as
$$CH^r(X)\longrightarrow H_L^{2r}(X,\Z(r))\buildrel cl_L^\DB\over\longrightarrow H_\DB^{2r}(X,\Z(r)).$$
By \cite[Theorem 7.11]{Esnault-Viehweg}, restricting the above maps to the algebraic part, we obtain the factorization of the Abel-Jacobi map $AJ_{X,alg}^r$
\begin{displaymath}
\xymatrix{CH_{alg}^r(X)\ar[rr]^-{AJ_{X,alg}^r} \ar[dr] && J_G^r(X)_{alg}\\
& H_{L,alg}^{2r}(X,\Z(r))\ar[ur]_-{AJ^{2r,r}}}
\end{displaymath}
where $AJ^{2r,r}$ denotes the restriction of $cl^\DB.$

Now, we have the diagram
\begin{displaymath}
\xymatrix{ CH_{alg}^r(X) \ar[r] \ar@/_1pc/@{->>}[drr]_-{AJ_{X,alg}^r} & H_{L,alg}^{2r}(X,\Z(r)) \ar@{->>}[r]^-{\Phi^r_{L,X}} \ar[dr]_-{AJ^{2r,r}} & Alg_{L,X}^r(\C) \ar@{..>}[d]^-{\exists!~h~\text{by the universality}}\\
&& J_G^r(X)_{alg}}
\end{displaymath}
The map $h$ is surjective by the commutativity and also injective on torsion because $AJ^{2r,r}$ is injective on torsion by Theorem~\ref{thm: torsion main}~(ii) and $\Phi^r_{L,X}$ is surjective on torsion by the construction. Since $h$ is induced by a morphism of abelian varieties, it is an isomorphism.
\end{proof}


\end{document}